\definecolor{myred}{HTML}{FD6360}
\definecolor{myblue}{HTML}{46A5FF}
\theoremstyle{definition}
\newtheorem{theorem}{Theorem}
\newtheorem{lemma}{Lemma}[section]
\newtheorem{remark}{Remark}
\newtheorem{example}{Example}
\newtheorem{corollary}{Corollary}
\newtheorem{assumption}{Assumption}
\newtheorem{definition}{Definition}[section]
\newtheorem{proposition}{Proposition}
\numberwithin{equation}{section}
\newcommand{\given}{\,\vert\,}
\renewcommand*{\Affilfont}{\normalsize\normalfont}
\title{Consistent line clustering using geometric hypergraphs\footnote{Supported by the Vilho, Yrj\"{o} and Kalle V\"{a}is\"{a}l\"{a} Foundation of the Finnish Academy of Science and Letters, and the French government, through the France 2030 investment plan managed by the Agence Nationale de la Recherche, as part of the Universit\'e C\^{o}te d'Azur's Initiative of Excellence, reference ANR-15-IDEX-0001.}}
\author[1]{Kalle Alaluusua}
\author[2]{Konstantin Avrachenkov}
\author[3]{B. R. Vinay Kumar}
\author[1]{Lasse Leskel\"{a}}
\affil[1]{ Aalto University, Espoo, Finland}
\affil[ ]{\Affilfont \{\href{mailto:kalle.alaluusua@aalto.fi}{kalle.alaluusua}, \href{mailto:lasse.leskela@aalto.fi}{lasse.leskela}\}@aalto.fi}
\affil[2]{ INRIA, Sophia Antipolis, Valbonne, France}
\affil[ ]{\Affilfont \href{mailto:k.avrachenkov@inria.fr}{k.avrachenkov@inria.fr}}
\affil[3]{Indian Institute of Technology Bombay (IITB), Mumbai, India}
\affil[ ]{\Affilfont \href{mailto:vinaykumar.br@iitb.ac.in}{vinaykumar.br@iitb.ac.in}}
\renewcommand\Affilfont{\itshape\small}
\begin{document}

\mydate

\maketitle
\thispagestyle{plain}
\pagestyle{plain}


\begin{abstract}

Subspace clustering becomes inherently difficult near intersections, where points from different subspaces are barely separated. Most existing theoretical results address this issue by imposing separation or sampling assumptions that limit the statistical effect of points near the intersection. We study a minimal setting of two intersecting lines in which the latent sampling law places polynomially large mass in small neighborhoods of the intersection. We derive information-theoretic lower bounds for exact and almost exact recovery under Gaussian noise.  In particular, we show that the exact-recovery threshold is determined by the rate at which the latent law concentrates near the intersection.  Since any two points are collinear, pairwise information alone does not reveal whether they are sampled from the same latent line. We therefore construct a hypergraph in which nearly collinear triples form hyperedges, and study the resulting hypergraph similarity matrix. Under a simple regularity condition on the latent distribution, we introduce a spectral algorithm that achieves the information-theoretic bounds up to polylogarithmic factors.
\end{abstract}

\tableofcontents

\section{Introduction}

Subspace clustering seeks to partition observations according to the low-dimensional subspaces from which they were generated \cite{vidal2011survey}. It becomes intrinsically difficult when the subspaces intersect, since observations near the intersection carry little information about their latent membership. We study the simplest nontrivial instance of this phenomenon: points sampled from two intersecting lines in the plane with additive Gaussian noise.

Because the two lines intersect, the data form a single connected high-density region, so standard density-based clustering methods designed for well-separated, roughly convex clusters are not well suited to this setting. Moreover, pairwise geometry is uninformative, since any two distinct points determine a line \cite{agarwal2005beyond}. By contrast, triples sampled from the same latent line are more likely to be approximately collinear than mixed triples. This motivates a hypergraph representation of the data, in which approximately collinear triples form hyperedges. Subspace clustering can then be viewed as a community-recovery problem on the similarity matrix induced by this geometric hypergraph.

\subsection{Statistical model}
We formalize the line-clustering problem as follows. Fix a community membership vector \(\bz=(z_1,\dots,z_N)\in\{1,2\}^N\), an angle
\(\alpha\in(0,\pi)\), a probability measure \(\nu\) on \(\R\) that is compactly supported and symmetric
about \(0\), and a noise parameter \(\sigma>0\).
Let \(L_1,L_2\subset\R^2\) be lines through the origin that intersect at angle \(\alpha\), and let
\(\bv_1,\bv_2\in\R^2\) be unit vectors spanning \(L_1\) and \(L_2\), respectively.
We observe points
\begin{equation}
 \label{eq:model}
 \bX_i = V_i \bv_{z_i} + \sigma \bY_i,
 \qquad i=1,\dots,N,
\end{equation}
where \(V_i \eqd \nu\), \(\bY_i \eqd N(0,I_2)\), and all random variables appearing on the
right-hand side of \eqref{eq:model} are mutually independent.
The conditional distribution of \(\bX=(\bX_1,\dots,\bX_N)\) given the label vector \(\bz\) is denoted by
\[
 \GLMM_N(\alpha,\nu,\sigma\mid \bz),
\]
which we refer to as the Gaussian line mixture model. To study the average-case performance of an estimator, we further assume that the community memberships
\(\bZ=(Z_1,\dots,Z_N)\) are independent random variables uniformly sampled from \(\{1,2\}\), independently
of \((V_i,\bY_i)_{i=1}^N\), and we then write
\[
 (\bZ,\bX)\eqd \GLMM_N(\alpha,\nu,\sigma).
\]
Given \(\bz\in\{1,2\}^N\) and observations \(\bX\sim \GLMM_N(\alpha,\nu,\sigma\mid \bz)\), the goal is to estimate the community membership vector \(\bz\) from the data \(\bX\).

Intersections are the main source of statistical difficulty in this model. Points generated near the origin have a small geometric margin separating the two lines, so even moderate noise can render their labels ambiguous. To describe this effect, we impose lower and upper mass-distribution conditions on the common one-dimensional sampling law \(\nu\).

\begin{assumption}[Concentration of mass at the intersection]
\label{ass:nu_lower_smallball}
There exists \(\rho\in(0,1]\) such that
\[
\liminf_{u\downarrow 0}\frac{\nu([-u,u])}{u^\rho}>0.
\]
\end{assumption}

Assumption~\ref{ass:nu_lower_smallball} says that the sampling law places non-negligible mass near the origin. This is the information-theoretically interesting part of the model: if a non-negligible fraction of the latent points lie near the intersection, then points from different communities can still form nearly collinear triples, which creates genuinely ambiguous configurations and leads to lower bounds for \emph{any} recovery procedure.

The exponent \(\rho\) quantifies the strength of concentration near the intersection. Smaller values of \(\rho\) allow stronger concentration near zero, while \(\rho=1\) corresponds to linear scaling, as would occur for a law whose density is locally bounded below near the origin.

\begin{assumption}[Upper \(\rho\)-regularity]
\label{ass:nu_upper_smallball}
There exists \(\rho\in(0,1]\) such that
\[
\sup_{I\subset\R,\ |I|>0}\frac{\nu(I)}{|I|^\rho}<\infty,
\]
where the supremum is taken over all bounded intervals \(I\subset\R\).
\end{assumption}

Assumption~\ref{ass:nu_upper_smallball} is a uniform anti-concentration condition. It prevents the latent law from placing excessive mass on arbitrarily short intervals. In the analysis, it is used to control both the probability that a latent point falls very close to the intersection and the probability that two independent latent coordinates fall unusually close to one another. For general \(\rho\in(0,1]\), Assumption~\ref{ass:nu_upper_smallball} means that the cumulative distribution
function of \(\nu\) is H\"older continuous of order \(\rho\). In particular, when \(0<\rho<1\), this is strictly weaker than a bounded-density assumption and includes absolutely continuous laws with integrable singularities; see Example~\ref{ex:power-law}.

In the special case \(\rho=1\), this reduces to Lipschitz continuity. If \(\nu\) is absolutely continuous, this is equivalent to \(\nu\) admitting a density that is bounded above. Together with Assumption~\ref{ass:nu_lower_smallball}, the case \(\rho=1\) therefore corresponds to a law whose density is bounded above globally and bounded below near the origin.

\begin{example}    
\label{ex:power-law}
A natural example is the power-law density
\[
\nu(du)=\frac{\rho}{2}|u|^{\rho-1}\1\{|u|\le 1\}\,du,
\qquad 0<\rho\le 1.
\]
For this law,
\(
\nu([-u,u])=u^\rho
\)
for all \(0\le u\le 1,\)
so Assumptions~\ref{ass:nu_lower_smallball} and \ref{ass:nu_upper_smallball} hold. When \(\rho=1\), this reduces to the uniform distribution on \([-1,1]\). When \(0<\rho<1\), the density has an integrable singularity at the origin, so the sampling law places more mass near the intersection.
\end{example}

Taken together, Assumptions~\ref{ass:nu_lower_smallball} and \ref{ass:nu_upper_smallball} describe the local regularity of the latent sampling law near the intersection. The lower condition governs the intrinsic statistical difficulty of the problem, whereas the upper condition provides the regularity needed for the concentration and separation estimates used in the recovery analysis.
\subsection{Related work}

While the subspace clustering literature is broad, a prevalent approach is to first construct an affinity graph from the data and then apply spectral clustering to recover the partition \cite{qu2023survey}. In contemporary formulations, one typically builds this affinity either from a learned representation matrix or from local or higher-order geometric relations \cite{miao2025survey}. The line of work closest to our construction comes from hybrid linear modeling \cite{chen2009multiway,chen2009scc}. These works emphasize that when local pairwise geometry is uninformative, one should build affinities from higher-order tuples. In the one-dimensional case, the \((d+2)\)-tuple affinities of \cite{chen2009multiway} reduce naturally to triples, and an experiment of \cite{agarwal2005beyond} illustrates the necessity of three-way affinities for line clustering.

From a theoretical perspective, intersections are usually handled indirectly. Standard surveys already note that subspace clustering becomes harder when many observations lie near an intersection \cite{vidal2011survey}. Much of the pointwise recovery literature, therefore, considers regimes where the ambiguous intersection region is either excluded or statistically negligible. Concretely, some guarantees impose disjointness \cite{elhamifar2010disjoint}, while others allow intersecting subspaces but study the similarity graph induced by a learned affinity matrix. A common proof strategy is then to distinguish three questions: whether this graph is subspace-preserving, meaning that it contains no edges between different underlying subspaces; whether the portion of the graph within each true subspace is connected; and whether these properties imply correct pointwise clustering, possibly after a post-processing step \cite{nasihatkon2011connectivity,soltanolkotabi2012geometric,wang2016graph}. In noisy settings, the same template is combined with quantitative conditions such as small inter-subspace affinity (for example, separation in principal angles), sufficiently well-spread samples within each subspace (for example, i.i.d. sampling and large \emph{inradius} conditions), and noise levels below thresholds set by these geometric quantities \cite{heckel2015tsc,soltanolkotabi2014robust,wang2016noisyssc}. Thus, while some works obtain pointwise guarantees even when subspaces intersect \cite{soltanolkotabi2012geometric,wang2016graph}, much of the theory emphasizes representation quality \cite{soltanolkotabi2014robust,wang2016noisyssc}, graph connectivity \cite{nasihatkon2011connectivity}, or subspace estimation \cite{liu2013lrr}, rather than information-theoretic thresholds for exact pointwise clustering. Such exact pointwise clustering can also be established for simpler pairwise affinities, such as thresholded dot product graphs, under appropriate separation and noise conditions \cite{heckel2015tsc}. In hybrid linear modeling, \cite{chen2009multiway,chen2009scc} consider tuple-based affinities rather than pairwise ones, but existing recovery results are still not formulated in terms of a sampling law whose local mass near the intersection determines exact-recovery thresholds.

Information-theoretic ideas have appeared in several distinct forms in the subspace-clustering literature. In the missing-data setting, \cite{pimentel2016information} studies identifiability and sampling requirements for subspace clustering with missing entries, deriving deterministic sampling conditions and showing that the underlying subspaces can be recovered from incomplete observations under suitable generic-position and random-observation assumptions. The closest work to ours in spirit is \cite{ahn2017information}, which studies exact subspace clustering from noisy sampled similarity measurements across data points, reformulates the problem as community recovery in hypergraphs, and derives a sharp threshold on the number of observed similarities required for exact recovery. By contrast, \cite{he2015information} uses information-theoretic objective functions, based on R\'enyi-type entropy and correntropy, to design robust low-rank representation methods for outlier-corrupted data, while \cite{hubig2017information} uses entropy, variation of information, and minimum-description-length ideas to control redundancy in coordinate-subspace clustering; these are algorithmic uses of information theory rather than information-theoretic limit results. Likewise, \cite{wang2018theoretical} analyzes noisy sparse subspace clustering after dimensionality reduction through geometric quantities such as subspace incoherence and inradius. Its main results are perturbation-based success conditions under deterministic and random models, while its impossibility result appears only in the privacy-preserving extension. None of these works addresses our regime of direct Euclidean observations from intersecting subspaces in which the latent law may place polynomially large mass near the intersection itself. In our setting, the dominant statistical obstruction is not missing data, similarity-sampling complexity, or dimensionality reduction, but the local small-ball behavior of the latent sampling law near the intersection, which directly determines the exact-recovery threshold.

Other subspace clustering methods such as generalizations of \(K\)-means algorithms \cite{bradley2000kplane,lipor2021ekss,wang2022kss} provide useful reference points but are less closely aligned with the mechanism of our procedure. Local geometric methods estimate local tangent or flat structure and then aggregate these estimates into a global partition, as in local best-fit flats and spectral methods based on local linear approximations or local principal component analysis \cite{ariascastro2011locallinear,ariascastro2017localpca, zhang2012lbf}. A geometric \(\ell_p\) minimization is analyzed primarily as a method for recovering the underlying subspaces themselves rather than exact pointwise labels \cite{lerman2011lp}.

On the hypergraph side, community recovery in planted hypergraph models is well understood across a range of regimes  \cite{ahn2019community,Chien_Lin_Wang_2019,dumitriu2025partial,zhang2022exact}. Of particular relevance are algorithms based on the similarity matrix \(\bW\), where \(W_{ij}\) counts the number of hyperedges containing both \(i\) and \(j\); this is exactly the matrix used by our spectral procedure \cite{alaluusua2023multilayer,dumitriu2026optimal,gaudio2023community}. The works \cite{ahn_lee_suh_2018,brusa2024model,ghoshdastidar_dukkipati_2015_AAAI,leordeanuEfficientHypergraphClustering2012,kaminski2019clustering} already point toward subspace clustering applications, while \cite{wang2023information} gives sharp exact recovery guarantees from \(\bW\) in the hypergraph SBM. Our setting differs in two essential ways. First, the hypergraph is not induced by a latent block model but instead obtained from noisy Euclidean geometry by thresholding total least squares (TLS) residuals of triples. Second, the resulting hyperedges are strongly dependent whenever they intersect. This contrasts in particular with block models, where edges are conditionally independent given the latent labels \cite{brusa2024model}. Our contribution is to show that similarity-matrix spectral methods remain near-optimal even in this dependent, intersection-dominated geometric setting.

\subsection{Contributions}
Our contributions are as follows:
\begin{enumerate}
\item We identify the statistical effect of the subspace intersection, when recovering \(\bZ\) from \(\bX\) with \((\bZ,\bX)\eqd \GLMM_N(\alpha,\nu,\sigma_N)\). If the latent coordinate law \(\nu\) places non-negligible mass near zero with exponent \(\rho\in(0,1]\), then exact recovery can occur only if \(\sigma_N\ll N^{-1/\rho}\), while almost exact recovery can occur only if \(\sigma_N\ll 1\). Thus, the exact recovery threshold is governed by the polynomial concentration of \(\nu\) near the intersection.

\item We propose a polynomial-time spectral method based on a geometric hypergraph whose hyperedges are
defined by thresholding total least squares (TLS) residuals over triples. When the common coordinate law
is sufficiently regular, this method achieves exact recovery when
\[
 \sigma_N \ll N^{-1/\rho}\polylog(N)^{-1},
\]
thereby matching the information-theoretic lower bound up to a polylogarithmic factor.
We also provide a data-driven quantile rule for selecting the threshold \(t\) based on the empirical distribution of the triple-wise TLS residuals. This procedure requires no prior knowledge of \(\sigma_N\) and achieves almost exact recovery whenever \(\sigma_N\ll 1\).
\end{enumerate}
In addition, we show that the coordinatewise MAP estimator, which is optimal for the usual Hamming loss, remains Bayes optimal
under permutation-invariant Hamming loss.

We also present a simulation study on synthetic two-line data. The first experiment isolates the effect of the hyperedge construction, comparing a TLS-based spectral method with the hypergraph clustering approach of \cite{brusa2024model}. The second isolates the effect of thresholding by comparing our data-driven TLS hypergraph with the clique-averaging procedure of \cite{agarwal2005beyond}. In these experiments, the TLS-based construction performs at least as well as the the alternatives, suggesting that the choice of geometric hyperedge construction is at least as important as the downstream clustering routine.

Taken together, these results place the Gaussian line mixture model at the intersection of two literatures: subspace clustering near intersections and hypergraph community recovery. They show that a simple geometric hypergraph representation can convert an intersecting-subspace problem into a dependent hypergraph recovery problem without losing statistical efficiency, up to polylogarithmic factors. Extending this picture to higher-dimensional or more general intersection-dominated subspace models remains an interesting open problem.

\paragraph{Notation.}
For positive number sequences $(a_N)$ and $(b_N)$, we write
$a_N \ll b_N$ when $\lim_{N\to\infty} a_N/b_N = 0$,
$a_N \lesssim b_N$ when $\limsup_{N\to\infty} a_N/b_N < \infty$,
and
$a_N \asymp b_N$ when $a_N \lesssim b_N$ and $a_N \gtrsim b_N$.
We also write
$a_N = o(b_N)$ and $b_N = \omega(a_N)$ as synonyms for $a_N \ll b_N$,
$a_N = O(b_N)$ and $b_N = \Omega(a_N)$ as synonyms for $a_N \lesssim b_N$,
and
$a_N = \Theta(b_N)$ as a synonym for $a_N \asymp b_N$.
For numbers $a,b$, we write $a \land b = \min\{a,b\}$ and
$a \lor b = \max\{a,b\}$.
Boldface letters denote vectors and matrices.
We denote equality in distribution by $X \eqd Y$, and let $X \eqd \mu$ mean that
the random variable $X$ has law $\mu$. 
Finally, we say that an event holds \emph{with high probability} if its probability converges to one as $N \to \infty$.

The remainder of the paper is organized as follows. Section~\ref{sec:information-theoretic_bounds} states our information-theoretic limits for the Gaussian line mixture model (Theorem~\ref{thm:information_theory}). Section~\ref{sec:algorithms} introduces the geometric hypergraph-based spectral clustering algorithms and their near-optimal exact and almost exact recovery guarantees (Theorems \ref{thm:almost_main} and \ref{thm:main_nonparametric}). Section~\ref{sec:simulation} presents a simulation study on synthetic two-line data, comparing our TLS-based spectral method with existing line clustering approaches. Section~\ref{sec:proofs} contains the proofs, with additional details deferred to the appendices.

\section{Information-theoretic bounds}
\label{sec:information-theoretic_bounds}

We seek to recover the community membership vector $\bZ$ from the observed data $\bX$
when the pair $(\bZ,\bX)$ is sampled from the Gaussian line mixture model $\GLMM_N(\alpha,\nu,\sigma)$.
In this setting, a \emph{community membership estimator} is a measurable function
$\bt \colon (\R^2)^N \mapsto \{1,2\}^N$
that maps a list of points $\bX$ into a community membership vector $\hat \bZ = \bt(\bX)$. 
To assess the accuracy of such estimators, a natural choice is to use the Hamming distance
$
 \Ham(\bZ, \hat \bZ) = \sum_{i=1}^N 1(\hat Z_i \ne Z_i).
$
When viewing community membership vectors as partitions of $[N]$, we use the permutation-invariant Hamming distance
\begin{equation}
 \label{def:ham_star}
 \Ham^*(\bZ,\hat\bZ)
 \weq \min\left\{\Ham(\bZ,\hat\bZ), \ N - \Ham(\bZ,\hat\bZ) \right\},
\end{equation}
which is a non-normalized version of the standard partition metric
known as classification error or misclassification rate \cite{Abbe_Fan_Wang_Zhong_2020,Meila_2007}.
We say that a community membership estimator $\bt$ achieves
\emph{exact recovery} if
\begin{equation}
 \label{eq:ExactRecovery}
 \lim_{N\to\infty} \E \left[\Ham^*(\bt(\bX),\bZ)\right] \weq 0,
\end{equation}
and \emph{almost exact recovery} if
\begin{equation}
 \label{eq:AlmostExactRecovery}
 \lim_{N \to\infty} N^{-1} \E \left[\Ham^*(\bt(\bX),\bZ)\right] \weq 0.
\end{equation}

The following theorem considers an oracle setting where the two latent lines are known.
Community recovery then reduces to a sequence of Gaussian hypothesis tests, and we can compute a signal-to-noise ratio at which detection becomes impossible. 

\begin{theorem}
\label{thm:information_theory}
Let \((\bZ,\bX)\) be sampled from \(\GLMM_N(\alpha,\nu,\sigma_N)\), and assume that
\(\nu\) satisfies Assumption~\ref{ass:nu_lower_smallball} with exponent
\(\rho\in(0,1]\). Then there exists a constant \(c>0\), depending only on
\(\alpha\) and \(\nu\), such that
\[
 \min_{\bt \in \cT} \E \left[ \Ham^*(\bt(\bX),\bZ) \right]
 \wge c N \left( \sigma_N\wedge 1 \right)^\rho
\]
for all sufficiently large \(N\). Consequently, exact recovery is impossible if \(\sigma_N \not\ll N^{-1/\rho}\), and almost exact
recovery is impossible if \(\sigma_N \not\ll 1\).
\end{theorem}

\section{Algorithms}
\label{sec:algorithms}

In this section, we introduce polynomial-time algorithms achieving exact recovery up to a polylogarithmic factor of the information-theoretic threshold and almost exact recovery up to the threshold.

\subsection{Spectral method}

Given three points $\bX_1, \bX_2, \bX_3 \in \R^2$ we seek to fit a line $\cL \subset \R^2$
by minimizing the sum of squared perpendicular distances $d(\bX_i, \cL)^2$ between $ \bX_i $ and $ \cL $.
This is referred to as the total least squares (TLS) problem \cite{van2013total}.
Define the TLS residual as
\begin{equation}
 \sigma^2_\tls(\bX_1, \bX_2, \bX_3)
 \weq \min_{\cL \in \text{lines}(\R^2)} \sum_{i=1}^3 d(\bX_i, \cL)^2.
\label{eq:TLS_error}
\end{equation}

We represent the data as a geometric hypergraph by treating sets of three points as hyperedges whenever they do not deviate much from a best-fit line (Figure \ref{fig:TLS-hypergrap}). This is the geometric preprocessing step that converts the intersecting-subspace problem into a hypergraph community recovery problem. 
The TLS hypergraph of $\bX = (\bX_1, \dots, \bX_N)$ with threshold parameter $t > 0$
is defined as $H = (V(H),E(H))$ where $V(H) = [N]$ and
\begin{equation}
 E(H)
 \weq \left\{ \{i,j,k\} \in \binom{[N]}{3} \colon \sigma_\tls(\bX_i,\bX_j,\bX_k) \leq t \right\}.
\label{eq:hyperedge_construction}
\end{equation}

\begin{figure}[ht]
\centering
\includegraphics[width=0.3\textwidth, trim={3cm 5cm 3cm 4.5cm},clip]{./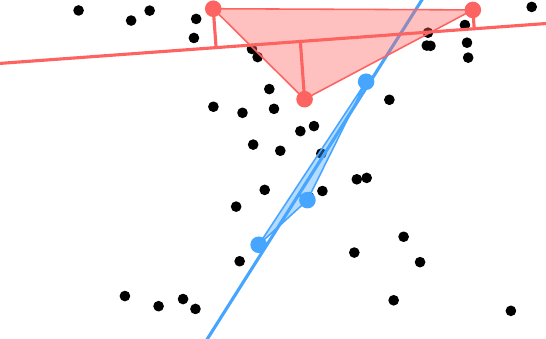}
\caption{Candidate hyperedges in the TLS hypergraph: the blue triple has a small TLS residual and becomes a hyperedge, while the red triple does not.}
\label{fig:TLS-hypergrap}
\end{figure}%

For a given threshold $t$,
Algorithm~\ref{alg:line_clustering_spectral} constructs a similarity matrix that,
for each node pair, counts the number of common hyperedges in $H$.
The community memberships are estimated using $K$-means on the leading eigenvectors of the similarity matrix. 
Theorem~\ref{thm:almost_main} shows that, with a suitable choice of threshold parameter,
Algorithm~\ref{alg:line_clustering_spectral} is information-theoretically optimal up to a polylogarithmic factor.

\begin{algorithm}[H]
\small 

\algrenewcommand\algorithmicrequire{\textbf{Input:}}
\algrenewcommand\algorithmicensure{\textbf{Output:}}
\caption{Spectral method}
\begin{algorithmic}[1]
\Require Points $\bX_1,\bX_2,\cdots, \bX_N \in \R^2$, threshold $t > 0$, parameter $\varepsilon > 0$
\Ensure Membership vector $\hat\bZ =(\hat Z_1,\dots, \hat Z_N) \in \{1,2\}^N$\\
Construct the TLS hypergraph $H$ of $\bX$ with threshold parameter $t > 0$.\\
Construct a similarity matrix $\bW$ where
$
W_{ij}
\weq \sum_{e: e \ni i,j} \1\{e \in E(H)\}
$
counts the number of hyperedges incident to $i$ and $j$.\\

Let $ \bU \in \mathbb{R}^{N \times 2} $ be the matrix with columns $ \bv_1, \bv_2 $, eigenvectors of $\bW$ corresponding to eigenvalues $\lambda_1$ and $\lambda_2$ with $\lambda_1 \geq \dots \geq \lambda_N$
\\
Assign community labels $\hat Z_i \in \{1,2\}$ based on a $(1+\varepsilon)$-approximate $K$-means clustering with $K = 2$ on the rows of $\bU$.
\end{algorithmic}
\label{alg:line_clustering_spectral}
\end{algorithm}	
In the last step of Algorithm~\ref{alg:line_clustering_spectral}, a
$(1+\varepsilon)$-approximate $K$-means clustering with $K=2$ refers to any
$K$-means procedure whose objective value
is at most $(1+\varepsilon)$ times the global minimum
(see \eqref{eq:k-means-objective}).

\begin{theorem}
\label{thm:almost_main}
Let \((\bZ,\bX)\) be sampled from \(\GLMM_N(\alpha,\nu,\sigma_N)\), and assume that \(\nu\)
satisfies Assumption~\ref{ass:nu_upper_smallball} with exponent \(\rho\in(0,1]\).
Fix an approximation parameter \(\varepsilon>0\), and let \(t_N\) denote the threshold parameter used in
Algorithm~\ref{alg:line_clustering_spectral}. Then the following hold.
\begin{enumerate}[label=(\roman*)]
\item If
\begin{equation}
 \sigma_N\sqrt{\log N}\ll t_N\ll \frac{1}{(N\log N)^{1/\rho}},
 \label{eq:exac}
\end{equation}
then Algorithm~\ref{alg:line_clustering_spectral}, with input parameters \(t_N\) and \(\varepsilon\), achieves exact recovery.
\item If
\begin{equation}
 \sigma_N\ll t_N\ll 1,
 \label{eq:almost_exact}
\end{equation}
then Algorithm~\ref{alg:line_clustering_spectral}, with input parameters \(t_N\) and \(\varepsilon\), achieves almost exact recovery.
\end{enumerate}
\end{theorem}

When the noise level \(\sigma_N\) is known, Theorem~\ref{thm:almost_main} gives an admissible window for the threshold \(t_N\).
In the exact recovery regime
\[
 \sigma_N \ll \frac{1}{N^{1/\rho}(\log N)^{1/\rho+1/2}},
\]
one convenient choice is the geometric mean of the lower and upper bounds in \eqref{eq:exac},
\[
 t_N
 \weq
 \Bigl(\sigma_N \sqrt{\log N}\cdot (N\log N)^{-1/\rho}\Bigr)^{1/2}.
\]
This lies in the admissible range and therefore yields exact recovery.

In the almost-exact recovery regime \(\sigma_N \ll 1\), from \eqref{eq:almost_exact}, one may similarly take
\(
 t_N = \sqrt{\sigma_N},
\)
which satisfies
\(
 \sigma_N \ll t_N \ll 1
\)
and yields almost exact recovery.

\begin{remark}[A simple thresholding method]
\label{re:thresholding_alternative}
Besides the spectral method of Algorithm~\ref{alg:line_clustering_spectral}, one may also consider a simpler \emph{local thresholding} rule based on the similarity matrix \(\bW\). Informally, one fixes an anchor vertex \(i_0\), uses the entry \(W_{i_0j}\) as a score for whether vertex \(j\) belongs to the same community as \(i_0\), and assigns \(j\) according to whether this score exceeds a suitable threshold.

Under the conditions of Theorem~\ref{thm:almost_main}, the entries of a fixed row of \(\bW\) exhibit a clear two-group structure: intra- and inter-community pairs have conditional means separated by a gap of order \(N(p-q)\), where \(p\) and \(q\) denote the intra- and inter-community hyperedge probabilities of the TLS hypergraph, while the corresponding variances are of order \(N^2((1-p)+q)\). A direct Chebyshev argument therefore shows that thresholding a single row at any level between these two mean values yields the same guarantees as in Theorem~\ref{thm:almost_main}: exact recovery in the very-low-noise regime and almost exact recovery in the low-noise regime.

We do not present this rule as the main algorithm because it depends on a threshold defined through unknown mean similarity levels, and its output depends on the choice of anchor vertex. By contrast, the spectral method uses the full similarity matrix and requires no such threshold. For this reason, we present and analyze the spectral method in detail, and use the thresholding rule only as a conceptual benchmark.
\end{remark}

\subsection{Spectral method with threshold selection}
When the noise level is unknown, we next describe a data-driven method for selecting $t_N$. In Algorithm \ref{alg:line_clustering_spectral_nonparameteric}, the threshold selection is based on counting observed triples in the TLS hypergraph. Theorem \ref{thm:main_nonparametric} shows that the algorithm achieves the information-theoretic threshold for almost exact recovery when we exploit the gap between the intra- and inter-community hyperedge probabilities. In particular, the threshold should be chosen such that the observed fraction of all triples aligns well with the expected fraction of intra-community triples
$
\P(Z_1=Z_2=Z_3)
= 2 \cdot (\frac12)^3
= \frac14.
$

\begin{algorithm}[H]
\small

\algrenewcommand\algorithmicrequire{\textbf{Input:}}
\algrenewcommand\algorithmicensure{\textbf{Output:}}
\caption{Spectral method with threshold selection}
\begin{algorithmic}[1]
\Require Points $\bX_1,\dots,\bX_N \in \mathbb{R}^2$, integer $M \ge 1$, target fraction $\theta \in [0,1]$, parameter $\varepsilon > 0$
\Ensure Membership vector $\hat\bZ =(\hat Z_1, \dots, \hat Z_N) \in \{1,2\}^N$

\State Sample unordered triples $\tau_1,\dots,\tau_M \subset [N]$ independently and uniformly at random.

\State Let
$\cM = \cup_{m=1}^M \tau_m$
and $\cM^c = [N] \setminus \cM$.

\State Compute the values $s_i=\sigma_{\rm TLS}(\tau_i)$ and sort the list $(s_i)_{i=1}^M$ to obtain
$s_{(1)}\le s_{(2)}\le\cdots\le s_{(M)}$.

\State 
 Let $k=\newround{\theta M}$ where ''$\newround{\cdot}$'' rounds to the nearest integer and set 
 $$
  t^{*}= 
  \begin{cases}
 s_{(k)},& \text{ if } 1\le k\le M,\\
 s_{(1)},& \text{ if } k = 0.  \end{cases}
 $$

\State Obtain a community label $\hat Z_i$ for each node $i \in \cM^c$ from Algorithm \ref{alg:line_clustering_spectral} on $\cM^c$ with $t^*$ and $\varepsilon$ as inputs.

\State Assign labels on \(\cM\) arbitrarily, for example uniformly at random.
\end{algorithmic}
\label{alg:line_clustering_spectral_nonparameteric}
\end{algorithm}
Since each sampled triple contributes at most three vertices, \(|\cM| \le 3M\). Under the regime of Theorem~\ref{thm:main_nonparametric}, we have \(M_N=o(N)\), so arbitrary labels on \(\cM\) do not affect almost exact recovery.

\begin{theorem}
\label{thm:main_nonparametric}
Let \((\bZ,\bX)\) be sampled from \(\GLMM_N(\alpha,\nu,\sigma_N)\), and assume that \(\nu\)
satisfies Assumptions~\ref{ass:nu_lower_smallball} and \ref{ass:nu_upper_smallball} with exponent
\(\rho\in(0,1]\). Fix an approximation parameter \(\varepsilon>0\), and let \(M_N\) denote the
integer input parameter used in Algorithm~\ref{alg:line_clustering_spectral_nonparameteric}. If
\[
 \sigma_N\ll 1
 \qquad\text{and}\qquad
 1\ll M_N\ll N^{1/2},
\]
then Algorithm~\ref{alg:line_clustering_spectral_nonparameteric}, with input parameters
\(M_N\), \(\theta=1/4\), and \(\varepsilon\), achieves almost exact recovery.
\end{theorem}

\begin{remark}[Computational complexity]
Algorithm \ref{alg:line_clustering_spectral_nonparameteric} has two main stages: threshold selection and clustering. First, selecting a threshold $t^*$ involves computing TLS residuals of $M_N$ sampled triples, each computation requiring constant time. Sorting these residuals takes $O(M_N \log M_N)$ operations, so the overall cost of threshold selection is $O(M_N \log M_N)$. Given $M_N \ll N^{1/2}$, this cost is negligible compared to the second stage. 

The clustering step involves computing TLS residuals of $\binom{N}{3} = O(N^3)$ triples and incrementing the corresponding entries of the similarity matrix, followed by computing its two leading eigenvectors in at most $O(N^3)$ time. The final two-means clustering of the $N$ rows is carried out by a $(1+\varepsilon)$-approximate $K$-means algorithm (see \eqref{eq:k-means-objective}); for example, the $(1+\varepsilon)$-approximation algorithm of \cite{matouvsek2000approximate} runs in time linear in $N$ up to a polylogarithmic factor. Hence, the total time complexity of the algorithm is $O(N^3)$.
\end{remark}

\begin{remark}[Recovering the line directions]
Once the community memberships have been estimated, one may estimate the two latent line directions by applying principal component analysis separately within each recovered cluster. Concretely, for \(k=1,2\), let
\[
\widehat C_k\coloneqq \{i:\hat Z_i=k\},
\qquad
n_k\coloneqq |\widehat C_k|,
\]
and define the empirical mean and covariance matrix
\[
 \widebar \bX_k=\frac1{n_k}\sum_{i\in\widehat C_k}\bX_i,
 \qquad
 \widehat \bS_k
 =\frac1{n_k}\sum_{i\in\widehat C_k}
 \left( \bX_i-\widebar \bX_k \right)\left( \bX_i-\widebar \bX_k \right)^{\top}.
\]
Any leading eigenvector \(\hat\bv_k\) of \(\widehat \bS_k\) may then be used as an estimator of the direction of the \(k\)th line. This kind of post-processing is standard in subspace clustering; see, for example, \cite[Section~2.2]{soltanolkotabi2014robust}. Since the main inferential target of the present paper is pointwise labeling rather than subspace estimation, we do not analyze this step further.
\end{remark}

\begin{remark}[Refinement by reassignment]
As a heuristic improvement, one may further refine the initial community assignments after estimating the two line directions. A simple option is to reassign each point to the cluster whose estimated line is closest in perpendicular distance. We do not establish theoretical guarantees for this refinement, but it is natural to expect that it helps resolve the instability of assignments near the intersection, where the hypergraph construction is most sensitive to the threshold. We conjecture that, in the exact recovery regime \(\sigma_N\ll N^{-1/\rho}\), combining Algorithm~\ref{alg:line_clustering_spectral_nonparameteric} with this refinement yields exact recovery.
\end{remark}

\section{Simulation study}
\label{sec:simulation}

We report two synthetic experiments designed to clarify the algorithmic picture rather than explore finite-sample phase transitions under the generative model. In both experiments, the latent geometry consists of two intersecting lines, but the sampling scheme is partly chosen to match earlier empirical work and therefore is not identical to the Gaussian line mixture model \(\GLMM_N(\alpha,\nu,\sigma_N)\). The first experiment isolates the effect of the hyperedge construction, while the second isolates the effect of thresholding versus using all triple-wise TLS residuals. 

To evaluate clustering accuracy given the latent communities, we use the Hubert--Arabie adjusted Rand index (ARI) \cite{Gosgens_Tikhonov_Prokhorenkova_2021,hubert_arabie_1985}, which measures similarity between two community assignments. The index equals one for identical assignments and has expected value zero under independent random assignments.

\subsection{Fixed thresholds for Gaussian and TLS hypergraphs}

We compare Algorithm~\ref{alg:line_clustering_spectral} with the \emph{HyperSBM} method of
\cite{brusa2024model} in the line clustering setting of \cite[Section~4.2]{brusa2024model}.
Let \(L_1,L_2 \subset \R^2\) be two line segments
given by
\[
\{(x,y)\in \R^2 : x \in [-0.5,0.5],\, y = m_k x\}
\] 
with slopes \(m_1 = -1\) and \(m_2 = 0.8\).
We take \(N = 60\) points, \(30\) in each community.
For each index \(i\) with label \(Z_i = k\) we draw
\(x_i \eqd \Unif[-0.5,0.5]\) and set
\[
  \bX_i = (x_i,y_i),
  \qquad
  y_i = m_k x_i + \varepsilon_i,
  \qquad
  \varepsilon_i \eqd \cN(0,\sigma_N^2),
\]
with \(\sigma_N = 0.01\), so that the noise perturbs only the \(y\)-coordinate. In contrast to \cite{brusa2024model}, we do not add outliers, so every node belongs to one of the two communities. This sampling scheme differs from the line mixture model \(\GLMM_N(\alpha,\nu,\sigma_N)\) used in our theory, where Gaussian noise is added in both coordinates; here we restrict the perturbation to the vertical direction to match the setup of \cite{brusa2024model}.

We construct two 3-uniform hypergraphs on this point set.
For the \emph{Gaussian hypergraph}, we follow the public replication code provided by
the authors of \cite{brusa2024model}.
For each triple \(\{i,j,k\}\) we fit a least-squares line to
\(\bX_i,\bX_j,\bX_k\), compute the sum of squared vertical residuals
\(d(\bX_i,\bX_j,\bX_k)^2\), and define the Gaussian similarity
\[
  s(\bX_i,\bX_j,\bX_k)
  = \exp\left(-d(\bX_i,\bX_j,\bX_k)^2 / \tau^2\right),
\]
with \(\tau^2 = 4\times 10^{-4}\) and threshold \(\varepsilon = 0.999\).
A 3-uniform hyperedge is added whenever \(s(\bX_i,\bX_j,\bX_k) > \varepsilon\).
This choice matches the replication code, although the article text reports \(\tau^2 = 0.04\).

For the \emph{TLS hypergraph}, we use the same latent data but replace the least-squares vertical residual by the TLS residual \(\sigma^2_\tls(\bX_i,\bX_j,\bX_k)\) from~\eqref{eq:TLS_error}, and define
hyperedges as in~\eqref{eq:hyperedge_construction}. Guided by Theorem~\ref{thm:almost_main}, we fix, for \(\sigma_N = 0.01\),
\[
  t_N \;=\; \sqrt{\sigma_N}.
\]

Following \cite{brusa2024model}, we thin the hypergraph by independently subsampling intra- and inter-community triples so that, in expectation, there are about twice as many intra- as inter-community retained hyperedges. Concretely, if $T_{\mathrm{in}}$ and $T_{\mathrm{out}}$ denote the numbers of intra- and inter-community hyperedges before thinning, we retain each intra-community triple with probability $p_{\mathrm{in}} = 0.1$ and each inter-community triple with probability 
\[ 
 p_{\mathrm{out}}
 = \min \left\{1,\, \frac{\mu p_{\mathrm{in}} T_{\mathrm{in}}}
 {(1-\mu) T_{\mathrm{out}}} \right\},
 \qquad
 \mu = \tfrac{1}{3}, 
\] 
so that the expected fraction of inter-community triples among the retained hyperedges is \(\mu\), and the expected ratio of intra- to inter-community hyperedges is \((1-\mu):\mu = 2:1\). For the TLS hypergraph, this thinning step is used only to make the retained edge densities comparable to those of the Gaussian construction.

Table~\ref{tab:edges_two_lines} summarizes,
for each construction, the size and composition of the candidate hyperedge set (before thinning) and the average number of hyperedges retained after thinning. For the Gaussian hypergraph, the Gaussian kernel with threshold \(\varepsilon\)
produces roughly \(1.1\times 10^4\) candidate hyperedges, of which about three quarters are
intra-community.
The TLS threshold \(t_N\) yields a denser candidate set, but after thinning, both constructions
produce hypergraphs with about \(1.2\times 10^3\) edges and a similar fraction of
inter-community edges.

\begin{table}[t]
\centering

\caption{
Edge statistics for the two-line clustering experiment with $N=60$.
''Candidate'' denotes the number of candidate hyperedges before thinning;
''Intra'' and ''Inter'' are within- and between-community edges;
''Sampled'' is the mean number of hyperedges retained after thinning over 100 datasets.}

\label{tab:edges_two_lines}
\begin{tabular}{lrrrr}
\toprule
Hypergraph & Candidate & Intra & Inter & Sampled \\
\midrule
Gauss  & 11\,375 & 8\,098 & 3\,277  & 1\,208 \\
TLS    & 18\,403 & 8\,120 & 10\,283 & 1\,214 \\
\bottomrule
\end{tabular}
\end{table}

On this common data-generation scheme, we compare three procedures, all with the number
of communities fixed at two:
\begin{itemize}
  \item \emph{HSBM--Gauss}: HyperSBM applied to the Gaussian hypergraph;
  \item \emph{Spectral--Gauss}: Algorithm~\ref{alg:line_clustering_spectral} applied to the
        Gaussian hypergraph;
  \item \emph{Spectral--TLS}: Algorithm~\ref{alg:line_clustering_spectral} applied to the TLS
        hypergraph.
\end{itemize}
We perform \(100\) independent repetitions of the data generation, hypergraph construction, 
and clustering steps, and compute the ARI between the estimated labels and the latent partition.
Figure~\ref{fig:ARI} presents boxplots which display both the typical accuracy (medians) and the variability across the 100 replications, providing a direct comparison of clustering accuracy under the two hypergraph constructions and the two clustering approaches.
\begin{figure}[ht]
\centering
\includegraphics[width=.6\textwidth,clip]{./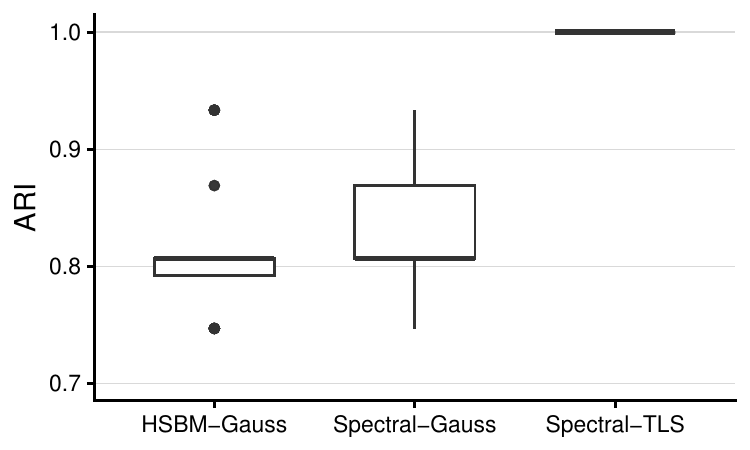}
\caption{Boxplots of the ARI values obtained by the three clustering methods on the two-line clustering problem.}
\label{fig:ARI}
\end{figure}%

The results in Figure~\ref{fig:ARI} suggest two main conclusions. First, on the Gaussian hypergraph, \emph{HSBM--Gauss} and \emph{Spectral--Gauss} exhibit comparable performance, with mean ARIs 0.818 and 0.820, respectively, and relatively tight dispersion over the 100 repetitions. In this experiment, once the hypergraph is fixed, the choice between HyperSBM and similarity-matrix spectral clustering has only a limited effect. Second, replacing the Gaussian hyperedge construction by the TLS construction markedly improves accuracy: \emph{Spectral--TLS} achieves exact recovery in every run (ARI \(=1\)). Thus, in this fixed-\(K\), matched-density comparison, the main empirical gain comes from the geometric hyperedge construction rather than from fitting a richer probabilistic model on a given hypergraph.

This complements the comparison in \cite{brusa2024model}, where HyperSBM clearly outperformed modularity-based methods. We do not claim that the TLS-based spectral method also outperforms those approaches, as we fix the number of communities in advance and do not repeat the model-selection procedure used in their study.

\subsection{Data-driven threshold selection against clique averaging}

This experiment isolates the effect of thresholding versus using the full weighted collection of TLS residuals. We keep the two-line model of the previous subsection, but increase the noise level to \(\sigma_N = 0.02\); all other aspects of the data-generating mechanism remain unchanged.
For each triple \(\{i,j,k\}\) we first compute the TLS residual
\[
  d_{ijk}
  \;\coloneqq\;
  \sigma_{\mathrm{TLS}}^2(\bX_i,\bX_j,\bX_k),
\]
and collect all such residuals in a vector
\(\bd \in [0,\infty)^{\binom{N}{3}}\), indexed by unordered triples \(\{i,j,k\}\).

The first method follows the \emph{Clique averaging} scheme of \cite{agarwal2005beyond}.
We treat \(d_{ijk}\) as a dissimilarity and convert it to an affinity
\[
  h_{ijk}
  \;=\;
  \exp\left(-d_{ijk} / \kappa\right),
\]
where \(\kappa > 0\) is a single global scale parameter, chosen as the median of
\(\{d_{ijk}\}_{1 \le i<j<k \le N}\).
Let \(\bh \in [0,1]^{\binom{N}{3}}\) collect all \(h_{ijk}\) in the same ordering
of triples as \(\bd\), and let \(\bDelta\) denote the incidence matrix of the complete
3-uniform hypergraph on \([N]\), with one row per triple and one column per unordered
node pair \(\{u,v\}\):
\[
  \bDelta \in \{0,1\}^{\binom{N}{3} \times \binom{N}{2}},
  \qquad
  \bDelta_{\{i,j,k\},\{u,v\}} = \1\left(\{u,v\} \subset \{i,j,k\}\right).
\]
We then solve the bounded least-squares problem
\[
  \min_{\bd^{(2)} \in [0,1]^{\binom{N}{2}}}
  \left\| \lambda_3 \bDelta \bd^{(2)} - \bh \right\|_2^2,
  \qquad
  \lambda_3 = \binom{3}{2}^{-1},
\]
where \(\bd^{(2)}\) is a vector of edge weights indexed by unordered node pairs \(\{u,v\}\).
These weights define a symmetric similarity matrix
\(\bW^{\mathrm{A}} \in \R^{N \times N}\) by
\[
  \bW^{\mathrm{A}}_{uv} = \bW^{\mathrm{A}}_{vu}
  = d^{(2)}_{\{u,v\}},
  \qquad 1 \le u < v \le N.
\]
We then apply to \(\bW^{\mathrm{A}}\) the same spectral clustering step as in
Algorithm~\ref{alg:line_clustering_spectral}.

The second method uses our TLS-based spectral procedure with data-driven threshold
selection, as in Algorithm~\ref{alg:line_clustering_spectral_nonparameteric}, but
with two simplifications tailored to this small-\(N\) setting.
First, instead of sampling \(M_N\) triples, we set
\(M_N = \binom{N}{3}\) and use \emph{all} triples when forming the empirical distribution
of \(\{d_{ijk}\}_{1 \le i<j<k \le N}\), so that the threshold \(t^*\) is the
empirical \(\theta\)-quantile of the full collection of TLS residuals, with \(\theta = 1/4\) as in Theorem~\ref{thm:main_nonparametric}.
Second, after computing \(t^*\) we do not split the vertex set into \(\cM\) and
\(\cM^c\), but instead run Algorithm~\ref{alg:line_clustering_spectral} on the
full vertex set \([N]\) with threshold \(t_N\) replaced by \(t^*\):
we form the TLS hypergraph by including all triples satisfying
\(\sigma_{\mathrm{TLS}}(\bX_i,\bX_j,\bX_k) \le t^*\), construct the similarity matrix
\(\bW^{\mathrm{TLS}} \in \R^{N \times N}\) by clique expansion as in the previous
subsection, and apply the same spectral \(k\)-means step with two clusters.

Thus, the downstream clustering step is identical; only the mapping from the weighted hypergraph to a similarity matrix differs. Clique averaging introduces a fitted scale parameter \(\kappa\), whereas TLS thresholding uses only the fixed quantile level \(\theta = 1/4\) and has no other tuning parameters. We repeat the full experiment (data generation, similarity construction, and
spectral clustering) for 100 independent replications and measure performance with
ARI values between the estimated labels and the latent partition. The resulting ARI values are summarized in Figure \ref{fig:ARI_2}.

\begin{figure}[htbp!]
\centering
\includegraphics[width=.6\textwidth,clip]{./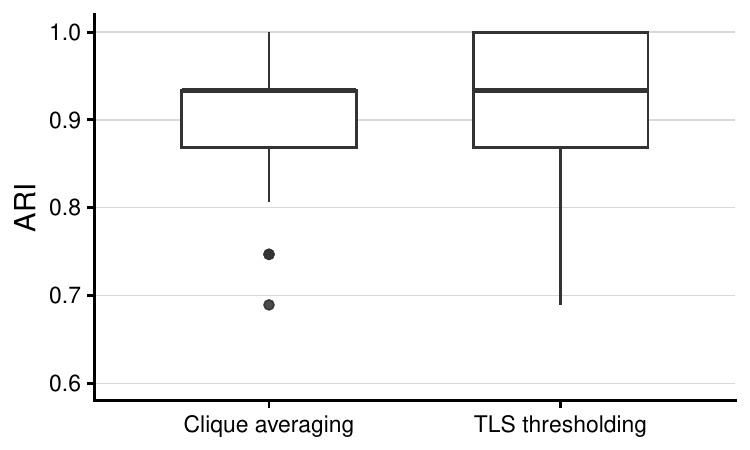}
\caption{Boxplots of ARI values for \emph{Clique averaging} and \emph{TLS thresholding} on the two-line clustering problem.}
\label{fig:ARI_2}
\end{figure}%

The mean ARI was approximately 0.90 for \emph{Clique averaging} and 0.92 for \emph{TLS thresholding}. The boxplots in Figure~\ref{fig:ARI_2} indicate that both methods typically achieve high clustering accuracy, with most ARI values above 0.8. The thresholding method shows a slightly higher central tendency and a broadly comparable spread to clique averaging, indicating a small but consistent performance gain in this two-line setting. At the same time, the overall performance of the two methods remains similar.

Across both experiments, the empirical picture is consistent: in this simple two-line setting, the geometric hyperedge construction appears to matter at least as much as the downstream clustering routine, and a thresholded TLS hypergraph performs comparably to approaches using the full collection of TLS residuals.

\section{Proofs}
\label{sec:proofs}

Throughout the proofs, we fix an auxiliary parameter \(\ell>0\) such that
\[
\supp(\nu)\subset[-\ell/2,\ell/2].
\]
With this choice, all latent locations lie on the truncated portions
\[
\widetilde L_k \coloneqq \{u\bv_k:\ |u|\le \ell/2\},
\qquad k=1,2,
\]
of the two latent lines. Thus, whenever a proof refers to line segments of length \(\ell\), it is only using these auxiliary truncations; the model itself remains defined in terms of the full lines.

For convenience, we also record the concrete forms of Assumptions~\ref{ass:nu_lower_smallball}
and \ref{ass:nu_upper_smallball} that will be used below. With the above choice of \(\ell\),
Assumption~\ref{ass:nu_lower_smallball} is equivalent to the existence of constants \(B>0\) and
\(u_0>0\) such that
\[
\nu([-u,u])\ge B\left(\frac{u}{\ell}\right)^\rho
\qquad\text{for all }0\le u\le u_0,
\]
and Assumption~\ref{ass:nu_upper_smallball} is equivalent to the existence of a finite constant
\(A_0>0\) such that
\[
\nu(I)\le A\left(\frac{|I|}{\ell}\right)^\rho
\]
for every interval \(I\subset[-\ell/2,\ell/2]\). Since the same bound then also holds with any larger
constant, we may replace \(A_0\) by \(A\coloneqq \max\{A_0,1\}\) and therefore assume \(A\ge 1\)
without loss of generality. In the proofs below, we invoke Assumptions~\ref{ass:nu_lower_smallball}
and \ref{ass:nu_upper_smallball} through these equivalent scale-normalized formulations without further comment.

\subsection{Information-theoretic analysis}
\label{sec:information-theoretic_analysis}

In this section, we study the optimal performance of any estimator under permutation-invariant Hamming loss. The main theoretical contribution of the section is a general decision-theoretic result: the coordinatewise MAP rule is Bayes optimal not only for standard Hamming loss, but also for the permutation-invariant Hamming loss. To prove this, we first establish a coordinatewise improvement lemma, which shows that, outside a null set, one can always reduce the risk of a suboptimal rule by flipping a single coordinate. We then specialize this result to our line mixture model and derive the information-theoretic recovery thresholds.

\subsubsection{Lower bound for Bayes risk}
\label{sec:lower_bound_for_bayes_risk}
In this subsection, we work in a general binary Bayesian setting,
with observations taking values in a general measurable space $\cX$.
Let $(Z,X)$ take values in $\{1,2\}\times\cX$ with prior
$Z\eqd\Unif\{1,2\}$ and class-conditional laws
$X\given\{Z=k\}\eqd F_k$, $k=1,2$.
Assume that each $F_k$ admits a density $f_k$ with respect to a
$\sigma$-finite measure $\mu$ on $\cX$, and that the marginal law of $X$
has density $(f_1+f_2)/2$ with respect to $\mu$.
By \cite[Theorem~1.31]{Schervish_1995}, there exists a regular conditional
distribution $\P(Z\in\cdot\given X=x)$, and on the set
$\{x\in\cX : f_1(x)+f_2(x)>0\}$ it satisfies
\begin{equation}
  \P(Z=k\given X=x)
  = \frac{f_k(x)}{f_1(x)+f_2(x)},\qquad k=1,2.
\label{eq:posterior_def_generic}
\end{equation}

The single-coordinate maximum a posteriori (MAP) estimator $\tilde t:\cX\to\{1,2\}$ is then
\begin{equation}
\label{eq:bayes_rule_generic}
  \tilde t(x)\in\argmax_{k\in\{1,2\}} f_k(x),
\end{equation}
with an arbitrary measurable tie-breaking.
For $N$ independent copies $(Z_i,X_i)$ of $(Z,X)$, write
\[
  \cZ=\{1,2\},\qquad
  \bX=(X_1,\dots,X_N)\in\cX^N,\qquad
  \bZ=(Z_1,\dots,Z_N)\in\cZ^N,
\]
let $\P$ be the joint law of $(\bZ,\bX)$ and $\P_2$ the marginal law of $\bX$,
and define the coordinatewise MAP estimator
\begin{equation}
\label{def:coordinatewise_map}  
  \tbt(\bx)
  \coloneqq
  \left(\tilde t(x_1),\dots,\tilde t(x_N)\right),\qquad \bx\in\cX^N.
\end{equation}

The following proposition shows that the coordinatewise MAP estimator $\tilde{\bt}$ from
\eqref{def:coordinatewise_map}, which is Bayes optimal for the usual Hamming loss, remains Bayes optimal for the permutation-invariant Hamming loss. We first define a regularity set relevant to the lemma.
\begin{definition}
\label{def:good_set_bayes}
For $\bx=(x_1,\dots,x_N)\in\cX^N$ define
\[
 \cR
 \;\coloneqq\;
 \left\{
 \bx\in\cX^N:
 \min\{f_1(x_i),f_2(x_i)\}>0
 \ \text{and}\ 
 f_1(x_i)\neq f_2(x_i)
 \ \text{for all } i \in [N]
 \right\}.
\]
\end{definition}

\begin{proposition}\label{prop:pi-hamming-cwmap}
Let $(\bZ,\bX)$ be distributed according to $\P$ as above, and assume that
$\P_2(\cR)=1$.
Let $\cT$ be the class of measurable decision rules
$\bt:\cX^N\to\cZ^N$, and let $\tbt\in\cT$ denote the coordinatewise MAP
estimator defined in \eqref{def:coordinatewise_map}.
Then
\[
 \E \left[ \Ham^*(\tbt(\bX),\bZ) \right]
 \weq \min_{\bt \in \cT} \E \left[ \Ham^*(\bt(\bX),\bZ) \right].
\]
\end{proposition}
The proof of the above proposition is deferred to Appendix \ref{ap:proof_of_prop:pi-hamming-cwmap} since it relies on auxiliary decision-theoretic results collected in Appendix~\ref{ap:decision_theory}.

\subsubsection{Application to the Gaussian line mixture model}

We now specialize the general Bayes optimality results to the clustering problem at hand. Let
\((\bZ,\bX)\eqd \GLMM_N(\alpha,\nu,\sigma)\), and fix \(\ell>0\) such that
\(\supp(\nu)\subset[-\ell/2,\ell/2]\). Recalling the model definition \eqref{eq:model}, Tonelli's theorem implies that the conditional law of \(\bX_i\) given \(Z_i=k\) admits a Lebesgue density
\(f_k\) on \(\R^2\),
\begin{align*}
 f_k(\bx)
 &= \frac{1}{\sigma^2}
 \int_{\R^2}
 \phi\left(\frac{x_1-u_1}{\sigma}\right)
 \phi\left(\frac{x_2-u_2}{\sigma}\right)
 \,G_k(du_1,du_2),
\end{align*}
where \(\phi(t)=(2\pi)^{-1/2}e^{-t^2/2}\) is the standard normal density and
\(G_k\) is the image of \(\nu\) under the map \(u\mapsto u\bv_k\). Equivalently,
\[
 f_k(x_1,x_2)
 =
 \frac{1}{\sigma^2}
 \int_{\R}
 \phi\left(\frac{x_1-u\,v_{k1}}{\sigma}\right)
 \phi\left(\frac{x_2-u\,v_{k2}}{\sigma}\right)
 \,\nu(du),
\]
where \(\bv_k=(v_{k1},v_{k2})\).
The unconditional law of \(\bX_i\) is then absolutely continuous with respect to Lebesgue measure \(\lambda\) on \(\R^2\) with density \((f_1+f_2)/2\), so the model assumptions of Section~\ref{sec:lower_bound_for_bayes_risk} hold with \(\mu=\lambda\) and \(f_1,f_2\) as above. Moreover, \(\min\{f_1(\bx),f_2(\bx)\}\) is strictly positive for every \(\bx\in\R^2\), and the one-observation tie set
\[
 T \coloneqq \{\bx\in\R^2:f_1(\bx)=f_2(\bx)\}
\]
is \(\lambda\)-null (see the proof of Lemma~\ref{lem:impossibility}). Since the marginal law of
each \(\bX_i\) has density \((f_1+f_2)/2\) with respect to \(\lambda\), it follows that
\[
 \P_2\bigl(\{\bx\in(\R^2)^N:\bx_i\in T\}\bigr)=0
 \qquad\text{for each } i\in[N].
\]
Since
\[
 \cR
 =
 \left\{
 \bx\in(\R^2)^N: \min\{f_1(\bx_i),f_2(\bx_i)\}>0 \text{ and }
 \bx_i\notin T \text{ for all } i\in[N]
 \right\},
\]
we conclude that \(\P_2(\cR)=1\). Hence Proposition~\ref{prop:pi-hamming-cwmap}
applies to the Gaussian line mixture model.

Let \(\tilde t\) be the single-coordinate MAP estimator from
\eqref{eq:bayes_rule_generic}, and define
\begin{equation} 
 \perr
 \;\coloneqq\;
 \P\left(Z_i\neq\tilde t(\bX_i)\right)
\label{eq:perr_def}
\end{equation}
for the corresponding Bayes error probability. The next lemma gives an
explicit lower bound for \(\perr\).

\begin{lemma}
\label{lem:impossibility}
Let \((\bZ,\bX)\eqd \GLMM_N(\alpha,\nu,\sigma)\), and fix \(\ell>0\) such that
\(\supp(\nu)\subset[-\ell/2,\ell/2]\). Assume that there exist \(\rho\in(0,1]\), \(B>0\), and
\(u_0>0\) such that
\[
 \nu([-u,u])
 \wge
 B\left(\frac{u}{\ell}\right)^\rho,
 \qquad 0\le u\le u_0.
\]
Then there exists a constant \(c>0\), depending only on \(\alpha,\rho,B,u_0/\ell\), such that
\[
 \perr
 \wge
 c\left(\frac{\sigma}{\ell}\wedge 1\right)^\rho
\]
for every \(\sigma>0\).
\end{lemma}
The proof is postponed to Appendix~\ref{ap:gaussian}, since it consists of lengthy but straightforward evaluations of Gaussian integrals.

\subsubsection{Proof of Theorem \ref{thm:information_theory}}

Let us fix a coordinatewise MAP estimator \(\tbt\), let
\((\bZ,\bX)\eqd \GLMM_N(\alpha,\nu,\sigma_N)\), and fix \(\ell>0\) such that
\[
\supp(\nu)\subset[-\ell/2,\ell/2].
\]
Let
\[
 D \weq \Ham(\tbt(\bX),\bZ),
 \qquad
 D^* \weq \Ham^*(\tbt(\bX),\bZ).
\]
By Proposition~\ref{prop:pi-hamming-cwmap}, the minimum Bayes risk under the permutation-adjusted
Hamming loss is achieved by \(\tbt\) and equals \(\E D^*\).

Let
\[
 p_N \weq \P\left(Z_i\neq \tbt_i(\bX_i)\right)
\]
denote the single-coordinate Bayes error. Since \(\tbt\) acts coordinatewise,
\(
 D \eqd \operatorname{Bin}(N,p_N),
\)
and therefore
\(
 \E D = N p_N,
\)
 and
\(
 \Var(D)=N p_N(1-p_N).
\)

We first relate \(\E D^*\) to \(N p_N\). Since
\[
 D^* = D\wedge(N-D) = D-(2D-N)1(D\ge N/2),
\]
we have
\[
 D^* \wge D - N1(D\ge N/2).
\]
Taking expectations gives
\[
 \E D^* \wge \E D - N\P(D\ge N/2).
\]
Since \(p_N\le 1/2\), we have \(\E D\le N/2\), and Chebyshev's inequality yields
\[
 \P(D\ge N/2)
 \weq \P(D-\E D \ge N/2-\E D)
 \wle \frac{\Var(D)}{(N/2-\E D)^2}
 \wle \frac{p_N}{N(1/2-p_N)^2}.
\]
Hence
\begin{equation}
\label{eq:BayesRiskLowerBound1_new}
 \E D^*
 \wge N p_N - \frac{p_N}{(1/2-p_N)^2}.
\end{equation}

We also have
\[
 D^* = \frac{N}{2}-\abs{D-\frac{N}{2}}.
\]
Since \(\E D\le N/2\),
\[
 \abs{D-\frac{N}{2}}
 \wle \abs{D-\E D} + \abs{\E D-\frac{N}{2}},
\]
and therefore
\[
 D^*
 \wge \E D - \abs{D-\E D}.
\]
Taking expectations and applying Cauchy--Schwarz, we obtain
\begin{equation}
\label{eq:BayesRiskLowerBound2_new}
 \E D^*
 \wge \E D - \sqrt{\Var(D)}
 \wge N p_N - \sqrt{N p_N}.
\end{equation}

If \(p_N\le 1/4\), then \eqref{eq:BayesRiskLowerBound1_new} gives
\[
 \E D^*
 \wge N p_N - 16p_N
 \weq (1-16N^{-1})Np_N.
\]
If \(1/4\le p_N\le 1/2\), then \(N p_N\ge N/4\), and \eqref{eq:BayesRiskLowerBound2_new} gives
\[
 \E D^*
 \wge N p_N - \sqrt{N p_N}
 \wge \left(1-(Np_N)^{-1/2}\right)Np_N
 \wge \left(1-2N^{-1/2}\right)Np_N.
\]
Therefore, for all sufficiently large \(N\),
\begin{equation}
\label{eq:BayesRiskLowerBound3_new}
 \E D^* \wge \frac12\,N p_N.
\end{equation}

By Lemma~\ref{lem:impossibility}, there exists a constant \(c_0>0\), depending only on
\(\alpha,\rho,B,u_0/\ell\), such that
\[
 p_N \wge c_0\left(\frac{\sigma_N}{\ell}\wedge 1\right)^\rho.
\]
Since \(\ell\) is fixed,
\[
 \left(\frac{\sigma_N}{\ell}\wedge 1\right)^\rho
 \wge (\ell\lor 1)^{-\rho}(\sigma_N\wedge 1)^\rho.
\]
Hence
\[
 p_N \wge c_1(\sigma_N\wedge 1)^\rho,
\]
where \(c_1>0\) depends only on \(\alpha\) and \(\nu\). Combining this with
\eqref{eq:BayesRiskLowerBound3_new}, we obtain
\[
 \min_{\bt \in \cT} \E \left[ \Ham^*(\bt(\bX),\bZ) \right]
 \weq \E D^*
 \wge \frac12 N p_N
 \wge \frac{c_1}{2}\,N \left(\sigma_N\wedge 1\right)^\rho
\]
for all sufficiently large \(N\). This proves the claimed lower bound.

The recovery consequences are immediate. Exact recovery implies
\[
 \min_{\bt\in\cT}\E\left[\Ham^*(\bt(\bX),\bZ)\right]\to 0,
\]
so the lower bound forces
\(
 N(\sigma_N\wedge 1)^\rho \to 0.
\)
In particular, \(\sigma_N\to 0\), and hence
\(
 N\sigma_N^\rho \to 0,
\)
that is, \(\sigma_N\ll N^{-1/\rho}\).

Likewise, almost exact recovery implies
\[
 \frac{1}{N}\min_{\bt\in\cT}\E\left[\Ham^*(\bt(\bX),\bZ)\right]\to 0,
\]
so
\(
 (\sigma_N\wedge 1)^\rho \to 0.
\)
Hence \(\sigma_N\ll 1\). This concludes the proof.
\qed

\subsection{Bounds on hyperedge probabilities}
\label{sec:HyperedgeProbabilities}

In this section, we analyze the hyperedge probabilities associated with the TLS hypergraph $H$
defined in \eqref{eq:hyperedge_construction}.
For $(\bZ,\bX)$ sampled from $\GLMM_N(\alpha,\nu,\sigma)$,
we denote the intra- and inter-community hyperedge probabilities by
\begin{align*}
 p &\ \coloneqq \ \P\left(\sigma_\tls(\bX_i,\bX_j,\bX_k) \le t \given Z_i=Z_j=Z_k \right), \\
 q &\ \coloneqq \ \P\left(\sigma_\tls(\bX_i,\bX_j,\bX_k) \le t \given Z_i \neq Z_j \text{ or } Z_i \neq Z_k \right).
\end{align*}
We derive lower bounds for $p$ and upper bounds for $q$ that ensure sufficient separation between the clusters in the hypergraph.

\subsubsection{Intra-community hyperedge probability}

We analyze the probability of a triple \(\bX_1,\bX_2,\bX_3\) forming a hyperedge in the TLS hypergraph when all three points are drawn from the same community. By symmetry, it is enough to consider community \(1\).

\begin{lemma}
\label{lem:_within_probability}
Let \((\bZ,\bX)\eqd \GLMM_N(\alpha,\nu,\sigma)\). For \(t>\sqrt{3}\,\sigma\), the intra-community hyperedge probability \(p\) satisfies
\[
 1 - p
 \wle \exp\left( - \frac{3}{2}
 \left( \dfrac{t^2}{3 \sigma^2} - 1 - \log \left( \dfrac{t^2}{3 \sigma^2} \right) \right)
 \right).
\]
\end{lemma}

\begin{proof}
By symmetry, when analyzing the joint law of \((\bX_1,\bX_2,\bX_3)\) given \(Z_1=Z_2=Z_3\), we may assume without loss of generality that \(Z_1=Z_2=Z_3=1\). Denote by \(\cL_1 \subset\R^2\) the line corresponding to community \(1\). Then the TLS residual defined by \eqref{eq:TLS_error} is bounded by
\[
 \sigma^2_\tls(\bX_1,\bX_2,\bX_3)
 \wle \sum_{i=1}^3 d(\bX_i,\cL_1)^2.
\]

Now recall that \(\bX_i = V_i \bv_1 + \sigma \bY_i\), where \(V_i\eqd \nu\) and \(\bY_i\eqd N(0,I_2)\). Since \(V_i \bv_1\in \cL_1\), we have
\[
 d(\bX_i,\cL_1)=\sigma \|\bY_i^\perp\|_2,
\]
where \(\bY_i^\perp\) denotes the component of \(\bY_i\) perpendicular to \(\cL_1\). By rotation invariance of the standard Gaussian distribution on \(\R^2\), we may assume without loss of generality that \(\cL_1\) is the \(x\)-axis, so that \(\|\bY_i^\perp\|_2=|Y_{i2}|\), where \(\bY_i=(Y_{i1},Y_{i2})\). Hence
\[
 \sum_{i=1}^3 d(\bX_i,\cL_1)^2
 \weq \sigma^2 \sum_{i=1}^3 Y_{i2}^2,
\]
and therefore
\[
 1-p
 \weq \P\bigl(\sigma^2_\tls(\bX_1,\bX_2,\bX_3)>t^2\bigr)
 \wle \P\left(\sum_{i=1}^3 Y_{i2}^2>t^2/\sigma^2\right).
\]
Since \(Y_{12},Y_{22},Y_{32}\) are independent standard Gaussian random variables, their sum of squares has the chi-square distribution with \(3\) degrees of freedom. Applying the tail bound \eqref{eq:ChiSquare} with \(s=t^2/(3\sigma^2)\) concludes the proof.
\end{proof}

\subsubsection{Inter-community hyperedge probability}
\label{sec:InterCommunityHyperedgeProbability}

We analyze the probability of a triple $\bX_1,\bX_2,\bX_3 \in \R^2$ forming a hyperedge in the TLS hypergraph, when not all points are chosen from the same community. This occurs when exactly one point is in the community 1, and the other two are in the community 2, or vice versa. By symmetry, the two configurations are identically distributed.

\begin{lemma}
\label{lem:prob_between}
Let \((\bZ,\bX)\eqd \GLMM_N(\alpha,\nu,\sigma)\) with
\(\supp(\nu)\subset[-\ell/2,\ell/2]\), \(\ell>0\). Choose coordinates so that
\[
L_1=\{(u,0):u\in\R\}\quad \text{and}\quad L_2=\{(u\cos\alpha,u\sin\alpha):u\in\R\},
\] where
\(\alpha\in(0,\pi)\) is fixed. For the inter-community configuration
\((Z_1,Z_2,Z_3)=(1,1,2)\), write
\(\bU_1=(U_{11},0)\), \(\bU_2=(U_{21},0)\), and
\(\bU_3=(V_3\cos\alpha,V_3\sin\alpha)\), and define \(S_{12}\coloneqq |U_{11}-U_{21}|\). Assume that there exist \(\rho\in(0,1]\) and constants \(C_0,C_1,C_2\ge 1\) such that, for all
\(0\le u\le \ell\) and \(0\le a\le a+u\le \ell\),
\begin{equation}
\label{eq:smallball_V}
 \P(|V_3| \le u) \wle C_0 \left(\frac{u}{\ell}\right)^\rho,
\end{equation}
\begin{equation}
\label{eq:smallball_S_cdf}
 \P(S_{12} \le u) \wle C_1 \left(\frac{u}{\ell}\right)^\rho,
\end{equation}
and
\begin{equation}
\label{eq:smallball_S_interval}
 \P\left(S_{12} \in [a,a+u]\right) \wle C_2 \left(\frac{u}{\ell}\right)^\rho.
\end{equation}
Let \(\delta\coloneqq (t+\sigma)/\ell\). If \(\delta\le 1/(2e)\), then the inter-community hyperedge probability \(q\) of the TLS hypergraph \(H=H_t\) satisfies
\[
 q \wle c \delta^\rho \log(1/\delta),
\]
where \(C\) depends only on \(\alpha,\rho,C_0,C_1,C_2\).
\end{lemma}

\begin{proof}
Let \(\bY_1,\bY_2,\bY_3\) be independent standard normal random variables in \(\R^2\),
independent of \(\bU_1,\bU_2,\bU_3\), and define
$
 \bX_i = \bU_i + \sigma \bY_i.
$
Then
\[
 q \weq \P\left( \sigma_\tls(\bX_1,\bX_2,\bX_3)\le t \right).
\]

Denote by \(B_{\bX_i}(t)\) the closed disk with center \(\bX_i\) and radius \(t\).
If \(\sigma_\tls(\bX_1,\bX_2,\bX_3) \le t\), then there exists a line \(\cL\)
such that \(d(\bX_i,\cL) \le t\) for all \(i\), or equivalently,
\(\cL\) intersects the disks \(B_{\bX_i}(t)\), \(i=1,2,3\).
Denote
$
 R_i = \sigma \twonorm{\bY_i}+t,
$
and observe that
$
 B_{\bX_i}(t) \subset B_{\bU_i}(R_i).
$
Hence, any line that intersects the disks
\(B_{\bX_1}(t),B_{\bX_2}(t),B_{\bX_3}(t)\)
also intersects the disks
\(B_1,B_2,B_3\), where
$
 B_i = B_{\bU_i}(R_i).
$
Therefore, with
\[
 \cI
 \weq \{\text{there exists a line that intersects the disks }B_1,B_2,B_3\},
\]
we have
$
 q \wle \P(\cI).
$
Let
$
 \cD
 \weq \{\text{the disks }B_1\text{ and }B_2\text{ are disjoint}\}.
$
Then
\begin{equation}
\label{eq:InterCommunity0_sb}
 q \wle \P(\cI,\cD)+\P(\cD^c).
\end{equation}

Let
\[
 R_{12}\;\coloneqq\; R_1+R_2,
 \qquad
 A \;\coloneqq\; \frac12(R_1+R_2)+R_3 \weq \frac{R_{12}}{2}+R_3,
\]
and, on $\cD$, define
\begin{equation}
 K_{12}
 \;\coloneqq\; \frac{R_{12}}{\sqrt{(U_{11}-U_{21})^2-R_{12}^2}}.
 \label{eq:K12}
\end{equation}
On $\cD^c$, we define $K_{12} \coloneqq \infty$.
Also define
\[
 \kappa_\alpha
 \;\coloneqq\;
 \begin{cases}
 (2|\cot\alpha|)^{-1}, & \alpha\neq \pi/2,\\
 \infty, & \alpha=\pi/2.
 \end{cases}
\]
We split
\begin{equation}
\label{eq:ID_split_angle}
 \P(\cI,\cD)
 \wle
 \P(\cI,\cD,K_{12}\le \kappa_\alpha)
 +
 \P(K_{12}>\kappa_\alpha,\cD).
\end{equation}

A geometric argument (Lemma~\ref{lem:U3_vertical_bound_angle}) implies that on
\(\cI\cap\cD\cap\{K_{12}\le \kappa_\alpha\}\),
\[
 |V_3|
 \le U_\alpha^*,
\]
where
\[
 U_\alpha^*
 \weq
 \frac{2}{\sin\alpha}
 \left[
 \frac12\left(|U_{11}|+|U_{21}|\right)K_{12}
 + A(1+K_{12})
 \right].
\]
Since \(|V_3|\) is independent of \(\bU_1,\bU_2,\bY_1,\bY_2,\bY_3\), by the small-ball bound
\eqref{eq:smallball_V}, for \(0\le u\le \ell\),
\[
 \P(|V_3|\le u)\wle C_0\left(\frac{u}{\ell}\right)^\rho.
\]
Since \(C_0\ge 1\), the same bound extends trivially to all \(u\ge 0\): if \(u>\ell\), then
\[
 \P(|V_3|\le u)=1\le C_0\left(\frac{u}{\ell}\right)^\rho.
\]
Therefore
\begin{align}
 \P(\cI,\cD,K_{12}\le \kappa_\alpha)
 &\weq \E \Bigl[
 1_\cD 1(K_{12}\le \kappa_\alpha)\,
 \P\left(\cI \mid \bU_1,\bU_2,\bY_1,\bY_2,\bY_3\right)
 \Bigr] \notag\\
 &\wle \E \Bigl[
 1_\cD \,
 \P\left(|V_3| \le U_\alpha^* \mid \bU_1,\bU_2,\bY_1,\bY_2,\bY_3\right)
 \Bigr] \notag\\
 &\wle C_0 \ell^{-\rho} \E\left[(U_\alpha^*)^\rho 1_\cD\right].
 \label{eq:InterCommunity1_angle}
\end{align}

Since \(|U_{11}|,|U_{21}|\le \ell/2\), we have
\[
 U_\alpha^* 1_\cD
 \wle
 \frac{1}{\sin\alpha}\,\ell K_{12} 1_\cD
 + \frac{2}{\sin\alpha}\,A(1+K_{12})1_\cD.
\]
Since \(0<\rho\le 1\), the map \(x\mapsto x^\rho\) is subadditive, so
\[
 (x+y)^\rho \le x^\rho+y^\rho,
 \qquad
 (1+x)^\rho \le 1+x^\rho
 \qquad (x,y\ge 0).
\]
Hence
\[
 (U_\alpha^*)^\rho 1_\cD
 \wle
 C_{\alpha,\rho}\Bigl(
 \ell^\rho K_{12}^\rho 1_\cD
 + A^\rho 1_\cD
 + A^\rho K_{12}^\rho 1_\cD
 \Bigr).
\]
Also,
\[
 A^\rho
 = \left(\frac{R_{12}}{2}+R_3\right)^\rho
 \wle 2^{-\rho}R_{12}^\rho + R_3^\rho.
\]
Therefore
\begin{align*}
 \E\left[(U_\alpha^*)^\rho 1_\cD\right]
 &\wle C_{\alpha,\rho}\Bigl(
 \ell^\rho \E[K_{12}^\rho 1_\cD]
 + \E[R_{12}^\rho]
 + \E[R_3^\rho] + \E[R_{12}^\rho K_{12}^\rho 1_\cD]
 + \E[R_3^\rho]\E[K_{12}^\rho 1_\cD]
 \Bigr).
\end{align*}
Since \(R_{12}\ge R_1\) and \(R_3\stackrel{d}{=}R_1\), we have \(\E[R_3^\rho]\le \E[R_{12}^\rho]\).
Thus, with
\begin{equation}
 M_\beta \weq \E[R_{12}^\beta], \qquad \beta>0,
 \label{eq:m_beta_def}
\end{equation}
and
\begin{equation}
 J_1 \weq \E[K_{12}^\rho 1_\cD],
 \qquad
 J_2 \weq \E[R_{12}^\rho K_{12}^\rho 1_\cD],
 \label{eq:def_J12}
\end{equation}
we obtain
\begin{equation}
\label{eq:InterCommunity2_angle}
 \P(\cI,\cD,K_{12}\le \kappa_\alpha)
 \wle C_{\alpha,\rho} C_0 \left(
   \frac{M_\rho}{\ell^\rho}
   + J_1
   + \frac{M_\rho}{\ell^\rho} J_1
   + \frac{J_2}{\ell^\rho}
 \right).
\end{equation}

For the complementary event, Markov's inequality gives
\begin{equation}
\label{eq:bad_slope_event}
 \P(K_{12}>\kappa_\alpha,\cD)
 \wle \kappa_\alpha^{-\rho}\E[K_{12}^\rho 1_\cD]
 \weq \kappa_\alpha^{-\rho}J_1,
\end{equation}
with the convention \(\infty^{-\rho}=0\). Combining
\eqref{eq:ID_split_angle}, \eqref{eq:InterCommunity2_angle}, and
\eqref{eq:bad_slope_event}, and recalling that \(C_0\ge 1\), we conclude that
\begin{equation}
\label{eq:InterCommunity3_angle}
 \P(\cI,\cD)
 \wle C_{\alpha,\rho} C_0 \left(
   \frac{M_\rho}{\ell^\rho}
   + J_1
   + \frac{M_\rho}{\ell^\rho} J_1
   + \frac{J_2}{\ell^\rho}
 \right).
\end{equation}

We now bound \(J_1\) and \(J_2\).
Observe that
\begin{equation}
 \cD = \{S_{12}>R_{12}\},
 \qquad
 K_{12}^\rho 1_\cD
 = \frac{R_{12}^\rho}{(S_{12}^2-R_{12}^2)^{\rho/2}}\,1(S_{12}>R_{12}).
\label{eq:K_{12}_expr}
\end{equation}
Let \(\mu\) denote the law of \(S_{12}\), and for \(0<r\le \ell\) define
\[
H(r)\weq \int_{(r,\ell]} \frac{r^\rho}{(s^2-r^2)^{\rho/2}}\,\mu(ds).
\]
Conditioning on \(R_{12}=r\), we obtain for \(0<r\le \ell\),
\[
\E[K_{12}^\rho 1_\cD \mid R_{12}=r]=H(r),
\qquad
\E[R_{12}^\rho K_{12}^\rho 1_\cD \mid R_{12}=r]=r^\rho H(r),
\]
and both conditional expectations vanish when \(r>\ell\), since \(S_{12}\in[0,\ell]\) almost surely.

Therefore, for every \(r>0\),
\begin{equation}
\E[K_{12}^\rho 1_\cD \mid R_{12}=r]
=
H(r)\,1(r\le \ell),
\qquad
\E[R_{12}^\rho K_{12}^\rho 1_\cD \mid R_{12}=r]
=
r^\rho H(r)\,1(r\le \ell).
\label{eq:H_appl}
\end{equation}

We claim that
\begin{equation}
\label{eq:H_bound_sb}
 H(r)
 \wle C_{\rho,C_2} \left(\frac{r}{\ell}\right)^\rho
 \log\left(\frac{4\ell}{r}\right),
 \qquad 0<r\le \ell.
\end{equation}
To prove this, split \(H(r)=H_{\rm near}(r)+H_{\rm far}(r)\), where
\begin{align}
 H_{\rm near}(r)
 &\;\coloneqq\; \int_{(r,2r]\cap(r,\ell]}
 \frac{r^\rho}{(s^2-r^2)^{\rho/2}}\,\mu(ds),\label{eq:def_Hnear}\\
 H_{\rm far}(r)
 &\;\coloneqq\; \int_{(2r,\ell]}
 \frac{r^\rho}{(s^2-r^2)^{\rho/2}}\,\mu(ds).\label{eq:def_Hfar}
\end{align}

For the near part, partition \((r,2r]\) into dyadic intervals: for \(j\ge 0\),
\[
 I_j \weq \left(r+2^{-j-1}r,\; r+2^{-j}r\right],
\]
where $I_1, I_2, \dots$ are disjoint and arranged from right to left. If \(s\in I_j\), then
\[
s-r \in \left(2^{-j-1}r,\;2^{-j}r\right]
\quad\text{and}\quad
s+r \in \left(2r+2^{-j-1}r,\;2r+2^{-j}r\right],
\]
so
\[
 s^2-r^2=(s+r)(s-r)\ge 2r(s-r)\ge 2r\cdot 2^{-j-1}r=2^{-j}r^2.
\]
Therefore,
\[
 \sup_{s\in I_j}\frac{r^\rho}{(s^2-r^2)^{\rho/2}} \le 2^{j\rho/2}.
\]
Together with \eqref{eq:def_Hnear}, this gives
\begin{equation}
H_{\rm near}(r)
= \sum_{j\ge 0}\int_{I_j}\frac{r^\rho}{(s^2-r^2)^{\rho/2}}\,\mu(ds)
\wle \sum_{j\ge 0}
\left(\sup_{s\in I_j}\frac{r^\rho}{(s^2-r^2)^{\rho/2}}\right)\mu(I_j)
\wle \sum_{j\ge 0} 2^{j\rho/2}\,\mu(I_j).
\label{eq:Hnear_1}
\end{equation}
Also, by \eqref{eq:smallball_S_interval} and \(|I_j|=2^{-j-1}r\),
\[
\mu(I_j)
\wle C_2\left(\frac{|I_j|}{\ell}\right)^\rho
\weq C_2\left(\frac{2^{-j-1}r}{\ell}\right)^\rho.
\]
Therefore, \eqref{eq:Hnear_1} gives
\begin{align}
H_{\rm near}(r)
&\wle \sum_{j\ge 0} 2^{j\rho/2}\,
C_2\left(\frac{2^{-j-1}r}{\ell}\right)^\rho \nonumber\\
&\weq C_2\left(\frac{r}{\ell}\right)^\rho
\sum_{j\ge 0} 2^{j\rho/2}(2^{-j-1})^\rho \nonumber\\
&\weq C_2\,2^{-\rho}\left(\frac{r}{\ell}\right)^\rho
\sum_{j\ge 0} 2^{-j\rho/2} \nonumber\\
&\wle C_{\rho,C_2} \left(\frac{r}{\ell}\right)^\rho,
\label{eq:Hnear_eval}
\end{align}
since \(\sum_{j\ge 0}2^{-j\rho/2}\) is a convergent geometric series with \(0<2^{-\rho/2}<1\) by \(\rho>0\).

For the far part, partition \((2r,\ell]\) into dyadic intervals: let
\[
 m_r \weq \left\lceil \log_2\left(\frac{\ell}{r}\right)\right\rceil,
 \qquad
 J_k \weq (2^k r,\,2^{k+1}r]\cap(2r,\ell], \qquad 1\le k\le m_r.
\]
If \(s\in J_k\), then \(s\ge 2^k r\ge 2r\), so \(r\le s/2\) and hence
\[
 s^2-r^2 \ge s^2-\frac{s^2}{4}=\frac34 s^2 \ge \frac34\,2^{2k}r^2.
\]
Therefore
\[
\sup_{s\in J_k}\frac{r^\rho}{(s^2-r^2)^{\rho/2}}
\wle \left(\frac43\right)^{\rho/2}2^{-k\rho}
\wle C_\rho 2^{-k\rho}.
\]
Together with \eqref{eq:def_Hfar}, this gives
\begin{equation}
H_{\rm far}(r)
= \sum_{k=1}^{m_r}\int_{J_k}\frac{r^\rho}{(s^2-r^2)^{\rho/2}}\,\mu(ds)
\wle \sum_{k=1}^{m_r}
\left(\sup_{s\in J_k}\frac{r^\rho}{(s^2-r^2)^{\rho/2}}\right)\mu(J_k)
\wle C_\rho \sum_{k=1}^{m_r} 2^{-k\rho}\,\mu(J_k).
\label{eq:Hfar_1}
\end{equation}
Also, by \eqref{eq:smallball_S_interval} and \(|J_k|\le 2^k r\),
\[
\mu(J_k)
\wle C_2\left(\frac{|J_k|}{\ell}\right)^\rho
\wle C_2\left(\frac{2^k r}{\ell}\right)^\rho.
\]
Therefore, \eqref{eq:Hfar_1} gives
\begin{align}
H_{\rm far}(r)
&\wle C_\rho \sum_{k=1}^{m_r} 2^{-k\rho}\,
C_2\left(\frac{2^k r}{\ell}\right)^\rho \nonumber\\
&\weq C_{\rho,C_2} \left(\frac{r}{\ell}\right)^\rho
\sum_{k=1}^{m_r} 1 \nonumber\\
&\weq C_{\rho,C_2}\, m_r \left(\frac{r}{\ell}\right)^\rho \nonumber\\
&\wle C_{\rho,C_2} \left(\frac{r}{\ell}\right)^\rho
\left(1+\log\frac{\ell}{r}\right),
\label{eq:Hfar_eval}
\end{align}
where the last inequality uses
\[
m_r=\left\lceil \log_2\left(\frac{\ell}{r}\right)\right\rceil
\wle 1+\log_2\left(\frac{\ell}{r}\right)
\wle C\left(1+\log\frac{\ell}{r}\right),
\]
and the factor \(C\) is absorbed into \(C_{\rho,C_2}\).

Since \(H(r)=H_{\rm near}(r)+H_{\rm far}(r)\), \eqref{eq:Hnear_eval} and \eqref{eq:Hfar_eval} imply
\[
 H(r)
 \wle C_{\rho,C_2} \left(\frac{r}{\ell}\right)^\rho
 \left(1+\log\frac{\ell}{r}\right).
\]
Since \(0<r\le \ell\), we have \(\log(\ell/r)\ge 0\), and therefore
\(
1+\log\frac{\ell}{r}
\wle \log 4+\log\frac{\ell}{r}
= \log\frac{4\ell}{r}.
\)
This proves \eqref{eq:H_bound_sb}.

Consequently, \eqref{eq:def_J12}, \eqref{eq:H_appl}, and the tower property of conditional expectation yield
\begin{align}
 J_1
 &= \E\left[\E\left[K_{12}^\rho 1_\cD \mid R_{12}\right]\right]
 \nonumber\\
 &= \E\left[H(R_{12})\,1(R_{12}\le \ell)\right]
 \nonumber\\
 &\wle C_{\rho,C_2} \ell^{-\rho}
 \E\left[ R_{12}^\rho \log\left(\frac{4\ell}{R_{12}}\right) 1(R_{12}\le \ell)\right],
 \label{eq:J1_sb}
\\
 J_2
 &= \E\left[\E\left[R_{12}^\rho K_{12}^\rho 1_\cD \mid R_{12}\right]\right]
 \nonumber\\
 &= \E\left[R_{12}^\rho H(R_{12})\,1(R_{12}\le \ell)\right]
 \nonumber\\
 &\wle C_{\rho,C_2} \ell^{-\rho}
 \E\left[ R_{12}^{2\rho} \log\left(\frac{4\ell}{R_{12}}\right) 1(R_{12}\le \ell)\right].
 \label{eq:J2_sb}
\end{align}

Next we estimate moments of
\(
 R_{12} = 2t + \sigma \twonorm{\bY_1} + \sigma \twonorm{\bY_2}.
\)
Since \(\bY_1 \eqd N(0,I_2)\), the random variable \(\twonorm{\bY_1}\) is Rayleigh with density
\(
f(r)= r e^{-r^2/2}1(r\ge 0).
\)
Hence, for every \(\beta>0\),
\[
\E \twonorm{\bY_1}^\beta
\weq \int_0^\infty r^\beta f(r)\,dr
\weq \int_0^\infty r^{\beta+1} e^{-r^2/2}\,dr
\weq 2^{\beta/2}\Gamma\left(1+\frac{\beta}{2}\right)
<\infty.
\]
Thus \(\twonorm{\bY_1}\) has finite moments of every order. The same holds for \(\twonorm{\bY_2}\).
We claim that
\begin{equation}
(a+b+c)^\beta \wle c_\beta(a^\beta+b^\beta+c^\beta),
\qquad a,b,c\ge 0,
\label{eq:sub_conv}
\end{equation}
where
\[
c_\beta \weq
\begin{cases}
1, & 0<\beta\le 1,\\
3^{\beta-1}, & \beta>1.
\end{cases}
\]
Indeed, if \(0<\beta\le 1\), then \(x\mapsto x^\beta\) is subadditive on \([0,\infty)\), so
\(
(a+b+c)^\beta \wle a^\beta+b^\beta+c^\beta.
\)
If \(\beta>1\), then \(x\mapsto x^\beta\) is convex, and Jensen's inequality gives
\[
\left(\frac{a+b+c}{3}\right)^\beta
\wle \frac{a^\beta+b^\beta+c^\beta}{3}.
\]
For \(\beta\in\{\rho,2\rho\}\), \eqref{eq:m_beta_def} and \eqref{eq:sub_conv} give
\begin{align}
M_\beta
&\weq \E\left(2t+\sigma\twonorm{\bY_1}+\sigma\twonorm{\bY_2}\right)^\beta \nonumber\\
&\wle c_\beta\left((2t)^\beta + \sigma^\beta \E\twonorm{\bY_1}^\beta
+ \sigma^\beta \E\twonorm{\bY_2}^\beta\right) \nonumber\\
&\weq c_\beta\left(2^\beta t^\beta
+ 2^{1+\beta/2}\Gamma\left(1+\frac{\beta}{2}\right)\sigma^\beta\right) \nonumber\\
&\wle C_\beta (t+\sigma)^\beta
\weq C_\beta \ell^\beta \delta^\beta,
\label{eq:Mbeta_sb}
\end{align}
where \(C_\beta\) depends only on \(\beta\).

Also, for any \(\beta>0\) and any \(0<r\le \ell\),
\[
 r^\beta \log\left(\frac{4\ell}{r}\right)
 = r^\beta \log\left(\frac{4}{\delta}\right)
 + r^\beta \log\left(\frac{\ell\delta}{r}\right).
\]
The second term is nonpositive when \(r\ge \ell\delta = t+\sigma\), while for \(0<r\le \ell\delta\),
the function \(u\mapsto u^\beta \log(1/u)\) is bounded by \((e\beta)^{-1}\) on \((0,1]\).
Hence
\[
 r^\beta \log\left(\frac{4\ell}{r}\right)
 \wle r^\beta \log\left(\frac{4}{\delta}\right)
 + \frac{(\ell\delta)^\beta}{e\beta}.
\]
Taking expectations and using \eqref{eq:Mbeta_sb}, we obtain
\begin{equation}
\label{eq:Rbeta_log_sb}
 \E\left[ R_{12}^\beta \log\left(\frac{4\ell}{R_{12}}\right) 1(R_{12}\le \ell)\right]
 \wle C_\beta \ell^\beta \delta^\beta \log(1/\delta),
 \qquad \beta\in\{\rho,2\rho\},
\end{equation}
where we also used \(\delta\le 1/(2e)\) to absorb \(\log(4/\delta)\) into \(C\log(1/\delta)\).

Combining \eqref{eq:J1_sb}, \eqref{eq:J2_sb}, and \eqref{eq:Rbeta_log_sb}, we get
\begin{equation}
\label{eq:Jbounds_sb}
 J_1 \wle C_{\rho,C_2}\,\delta^\rho \log(1/\delta),
 \qquad
 J_2 \wle C_{\rho,C_2}\,\ell^\rho \delta^{2\rho} \log(1/\delta).
\end{equation}
Substituting \eqref{eq:Mbeta_sb} and \eqref{eq:Jbounds_sb} into
\eqref{eq:InterCommunity3_angle}, we conclude that
\begin{equation}
\label{eq:InterCommunity3_sb}
 \P(\cI,\cD)
 \wle C_{\alpha,\rho,C_0,C_2}
 \Bigl(
   \delta^\rho
   + \delta^\rho \log(1/\delta)
   + \delta^{2\rho} \log(1/\delta)
 \Bigr).
\end{equation}

It remains to bound \(\P(\cD^c)\).
Since \(\cD^c = \{S_{12}\le R_{12}\}\) and \(S_{12}\) is independent of \(R_{12}\), the cumulative small-ball bound
\eqref{eq:smallball_S_cdf} yields
\[
 \P(\cD^c)
 = \E\left[\P(S_{12}\le R_{12} \mid R_{12})\right]
 \wle C_1 \ell^{-\rho}\E[R_{12}^\rho 1(R_{12}\le \ell)] + \P(R_{12}>\ell).
\]
Since \(R_{12}^\rho/\ell^\rho \ge 1\) on \(\{R_{12}>\ell\}\), we also have
\[
 \P(R_{12}>\ell) \wle \ell^{-\rho}\E[R_{12}^\rho 1(R_{12}>\ell)].
\]
Therefore
\begin{equation}
\label{eq:InterCommunity4_sb}
 \P(\cD^c)
 \wle C_1 \ell^{-\rho} \E[R_{12}^\rho]
 \wle C_{\rho,C_1}\,\delta^\rho.
\end{equation}

Finally, combining \eqref{eq:InterCommunity0_sb}, \eqref{eq:InterCommunity3_sb},
and \eqref{eq:InterCommunity4_sb}, and using
\[
 \delta^{2\rho}\log(1/\delta) \le \delta^\rho\log(1/\delta),
 \qquad
 \delta^\rho \le \delta^\rho\log(1/\delta)
 \qquad (\delta\le e^{-1}),
\]
we obtain
\(
 q \wle c \delta^\rho \log(1/\delta),
\)
for a constant \(C\) depending only on \(\alpha,\rho,C_0,C_1,C_2\).
\end{proof}

\begin{corollary}
\label{cor:coordinate_smallball_common}
Let \((\bZ,\bX)\eqd \GLMM_N(\alpha,\nu,\sigma)\) with
\(
\supp(\nu)\subset[-\ell/2,\ell/2],
\)
\(\ell>0\).
Assume that for some \(\rho\in(0,1]\) and \(A\ge 1\),
\[
 \nu(I)\wle A\left(\frac{|I|}{\ell}\right)^\rho
\]
for every interval \(I\subset[-\ell/2,\ell/2]\). If \((t+\sigma)/\ell\le 1/(2e)\), then the
inter-community hyperedge probability \(q\) of the TLS hypergraph \(H=H_t\) satisfies
\[
 q \wle C\left(\frac{t+\sigma}{\ell}\right)^\rho
 \log\left(\frac{\ell}{t+\sigma}\right),
\]
where \(C\) depends only on \(\alpha,\rho,\) and \(A\).
\end{corollary}
\begin{proof}
Let \(V_1,V_2,V_3\) denote the signed coordinates of the three latent locations measured from the
common midpoint along their respective line segments, so that \(V_1,V_2,V_3\) are independent with
common law \(\nu\). Define
\(
S_{12}\coloneqq |V_1-V_2|.
\)
We verify the assumptions of Lemma~\ref{lem:prob_between}.

First, for \(0\le u\le \ell\),
\[
\P(|V_3|\le u)
=
\nu([-u,u]\cap[-\ell/2,\ell/2])
\wle
A\left(\frac{2u}{\ell}\right)^\rho.
\]
Thus \eqref{eq:smallball_V} holds with \(C_0=2^\rho A\).

Next, conditioning on \(V_1\), we obtain
\[
\P(S_{12}\le u\mid V_1)
=
\nu([V_1-u,V_1+u]\cap[-\ell/2,\ell/2])
\wle
A\left(\frac{2u}{\ell}\right)^\rho,
\]
and hence
\[
\P(S_{12}\le u)\wle 2^\rho A\left(\frac{u}{\ell}\right)^\rho.
\]
So \eqref{eq:smallball_S_cdf} holds with \(C_1=2^\rho A\).

Similarly, for \(0\le a\le a+u\le \ell\),
\[
\{S_{12}\in[a,a+u]\}
\subset
\{V_2\in[V_1-a-u,V_1-a]\}\cup\{V_2\in[V_1+a,V_1+a+u]\},
\]
so conditioning on \(V_1\) gives
\[
\P(S_{12}\in[a,a+u]\mid V_1)\wle 2A\left(\frac{u}{\ell}\right)^\rho.
\]
Taking expectations yields \eqref{eq:smallball_S_interval} with \(C_2=2A\).

Therefore Lemma~\ref{lem:prob_between} applies and gives
\[
q \wle c \delta^\rho\log(1/\delta),
\qquad
\delta\coloneqq \frac{t+\sigma}{\ell},
\]
where \(C\) depends only on \(\alpha,\rho,\) and \(A\).
\end{proof}

We conclude the section by deriving a lower bound on the probability of accepting an inter-community triple: if the distance between a pair of observations is less than $t \sqrt{2}$, then the pair forms hyperedges with each remaining node.

\begin{lemma}
\label{lem:q_lower_bound}
Assume \(\sigma_N=o(1)\). Let \((\bZ,\bX)\eqd \GLMM_N(\alpha,\nu,\sigma_N)\), and fix
\(\ell>0\) such that \(\supp(\nu)\subset[-\ell/2,\ell/2]\). Assume that there exist
\(\rho\in(0,1]\), \(B>0\), and \(u_0>0\) such that
\[
 \nu([-u,u]) \wge B\left(\frac{u}{\ell}\right)^\rho,
 \qquad 0\le u\le u_0.
\]
Then, for every fixed \(\varepsilon>0\), there exists a constant \(c_\varepsilon>0\) such that
\[
 q(t)\ge c_\varepsilon
\]
for all \(t\ge \varepsilon\) and all sufficiently large \(N\).
\end{lemma}
\begin{proof}
Since \(q(t)\) is nondecreasing in \(t\), it is enough to lower bound \(q(\varepsilon)\).

By symmetry, we may consider \((Z_1,Z_2,Z_3)=(1,1,2)\). Choose
\(L_1=[-\ell/2,\ell/2]\times\{0\}\), and write
\(\bU_1=(V_1,0)\), \(\bU_2=(V_2,0)\), where \(V_1,V_2\) are independent with law \(\nu\).

Let \(\bM=\frac12(\bX_1+\bX_2)\), and let \(L\) be a line through \(\bM\) and \(\bX_3\). Then
\(d(\bX_3,L)=0\) and \(d(\bX_1,L)=d(\bX_2,L)\le \frac12\|\bX_1-\bX_2\|_2\). Hence
\[
\sigma_\tls(\bX_1,\bX_2,\bX_3)
\le \frac{1}{\sqrt2}\|\bX_1-\bX_2\|_2
\le \frac{1}{\sqrt2}\left(\|\bU_1-\bU_2\|_2+\sigma_N\|\bY_1-\bY_2\|_2\right).
\]
Therefore, by independence,
\begin{align}
q(\varepsilon)
&\weq \P\left(\sigma_\tls(\bX_1,\bX_2,\bX_3)\le \varepsilon \,\middle|\, Z_1=1,Z_2=1,Z_3=2\right) \nonumber\\
&\wge
\P\left(\frac{1}{\sqrt2}\|\bU_1-\bU_2\|_2\le \frac \varepsilon 2\right)
\P\left(\frac{\sigma_N}{\sqrt2}\|\bY_1-\bY_2\|_2\le \frac \varepsilon 2\right).
\label{eq:q_lower_bound_product_short}
\end{align}

Now let \(a_\varepsilon\coloneqq \min\{\varepsilon/(2\sqrt2),u_0\}\). On the event \(\{|V_1|\le a_\varepsilon,\ |V_2|\le a_\varepsilon\}\),
\[
\|\bU_1-\bU_2\|_2=|V_1-V_2|
\le |V_1|+|V_2|
\le 2a_\varepsilon
\le \frac{\varepsilon}{\sqrt2}.
\]
Thus
\[
\P\left(\frac{1}{\sqrt2}\|\bU_1-\bU_2\|_2\le \frac \varepsilon 2\right)
\wge
\P(|V_1|\le a_\varepsilon,\ |V_2|\le a_\varepsilon)
=
\nu([-a_\varepsilon,a_\varepsilon])^2
\wge
B^2\left(\frac{a_\varepsilon}{\ell}\right)^{2\rho}.
\]
Also, since \(\bY_1-\bY_2\eqd N(0,2I_2)\), the norm \(\|\bY_1-\bY_2\|_2\) is Rayleigh with scale
parameter \(\sqrt2\), and hence
\[
\P\left(\frac{\sigma_N}{\sqrt2}\|\bY_1-\bY_2\|_2\le \frac \varepsilon 2\right)
=
1-\exp\left(-\frac{\varepsilon^2}{8\sigma_N^2}\right).
\]
Substituting these bounds into \eqref{eq:q_lower_bound_product_short} gives
\[
q(\varepsilon) \wge
B^2\left(\frac{a_\varepsilon}{\ell}\right)^{2\rho}
\left(1-\exp\left(-\frac{\varepsilon^2}{8\sigma_N^2}\right)\right).
\]
Since \(\sigma_N=o(1)\), the last factor converges to \(1\). Therefore, for all sufficiently large \(N\),
\[
q(\varepsilon)\wge \frac12\,B^2\left(\frac{a_\varepsilon}{\ell}\right)^{2\rho}.
\]
Hence, if we define
\[
c_\varepsilon \coloneqq \frac12\,B^2
\left(
\min\left\{\frac{\varepsilon}{2\sqrt2\,\ell},\frac{u_0}{\ell}\right\}
\right)^{2\rho},
\]
then \(q(\varepsilon)\ge c_\varepsilon\) for all sufficiently large \(N\). Since \(q(t)\) is nondecreasing in \(t\), it follows that
$
q(t)\ge q(\varepsilon)\ge c_\varepsilon
$
for all 
$t\ge \varepsilon$
and all sufficiently large \(N\), as claimed.
\end{proof}

\subsection{Analysis of the spectral algorithm}

We prove that the spectral algorithm outlined in Algorithm \ref{alg:line_clustering_spectral} achieves exact recovery of the community labels under the given model. Specifically, we apply the Davis--Kahan theorem to bound the distance between the estimated and true subspaces spanned by the leading eigenvectors of the expected similarity matrix, which is required for establishing the consistency of the algorithm.

\subsubsection{Concentration of the similarity matrix}
The following lemma establishes the concentration of the similarity matrix under the Frobenius norm.

\begin{lemma}
\label{lem:similarity_concentration}
Let \((\bZ,\bX)\eqd\GLMM_N(\alpha,\nu,\sigma_N)\), and fix \(\ell>0\) such that
\(
\supp(\nu)\subset[-\ell/2,\ell/2].
\)
Let \(H\) be the TLS hypergraph with threshold \(t_N\), let \(\bW\) be the corresponding
similarity matrix, and write
\(
 \bW^* \coloneqq \E[\bW\given \bZ].
\)
Write \(p\) and \(q\) for the intra- and inter-community hyperedge probabilities of \(H\).
\begin{itemize}
\item[(i)]
If
\(
  1-p\ll N^{-1}
\)
and
\(
  q\ll N^{-1},
\)
then
\begin{equation}
\label{it:conc_strong}
  \E\left[\|\bW-\bW^*\|_F^2\right] \ll N^3.
\end{equation}

\item[(ii)]
If
\(
  1-p\ll 1
\)
and
\(
  q\ll 1,
\)
then
\begin{equation}
\label{it:conc_weak}
  \E\left[\|\bW-\bW^*\|_F^2\right] \ll N^4.
\end{equation}
\end{itemize}
\end{lemma}
\begin{proof}
Set
\[
  \vartheta_N \coloneqq (1-p)+q.
\]
Since \(\bW\) is symmetric with zero diagonal,
\begin{equation}    
  \E\left[\|\bW-\bW^*\|_F^2 \given \bZ\right]
  = \sum_{i,j}\E\left[(W_{ij}-W^*_{ij})^2 \given \bZ\right]
  = 2\sum_{i<j}\Var(W_{ij}\given \bZ).
\label{eq:conc_W_decom}
\end{equation}

Fix \(i<j\). Since
\[
W_{ij}=\sum_{k\notin\{i,j\}} A_{ijk},
\]
we have
\[
\Var(W_{ij}\given \bZ)
=
\sum_{k\notin\{i,j\}}\Var(A_{ijk}\given \bZ)
+
\sum_{\substack{k,\ell\notin\{i,j\}\\k\neq \ell}}
\cov(A_{ijk},A_{ij\ell}\given \bZ).
\]
There are \(N-2=O(N)\) variance terms and \(O(N^2)\) covariance terms.

If \(Z_i=Z_j\), then each \(A_{ijk}\) is Bernoulli with parameter either \(p\) or \(q\), according
to whether \(Z_k=Z_i\) or \(Z_k\neq Z_i\). Hence
\[
\Var(A_{ijk}\given \bZ)\le \max\{p(1-p),q(1-q)\}\le \vartheta_N.
\]
Similarly, if \(Z_i\neq Z_j\), then every triple \((i,j,k)\) is inter-community, so
\(A_{ijk}\) is Bernoulli with parameter \(q\), and again
\[
\Var(A_{ijk}\given \bZ)=q(1-q)\le \vartheta_N.
\]
By Cauchy--Schwarz, every covariance term satisfies
\[
|\cov(A_{ijk},A_{ij\ell}\given \bZ)|
\le
\sqrt{\Var(A_{ijk}\given \bZ)\Var(A_{ij\ell}\given \bZ)}
\le \vartheta_N.
\]
Therefore there exists a constant \(C<\infty\) such that
\[
  \sup_{1\le i<j\le N}\Var(W_{ij}\given \bZ)\le C N^2 \vartheta_N.
\]
Substituting this into \eqref{eq:conc_W_decom} yields
\[
  \E\left[\|\bW-\bW^*\|_F^2 \given \bZ\right]
  \le 2\binom{N}{2} C N^2 \vartheta_N
  \lesssim N^4 \vartheta_N.
\]
Taking expectations over \(\bZ\), we conclude that
\begin{equation}
\label{eq:frob_conc_general}
  \E\left[\|\bW-\bW^*\|_F^2\right]
  \lesssim N^4 \vartheta_N.
\end{equation}
In case (i), \(\vartheta_N\ll N^{-1}\), and
\eqref{eq:frob_conc_general} gives
\[
\E\left[\|\bW-\bW^*\|_F^2\right]\ll N^3.
\]
In case (ii), \(\vartheta_N\ll 1\), and
\eqref{eq:frob_conc_general} gives
\[
\E\left[\|\bW-\bW^*\|_F^2\right]\ll N^4.
\]
This proves \eqref{it:conc_strong} and \eqref{it:conc_weak}.
\end{proof}

\begin{corollary}
\label{cor:similarity_concentration_conditions}
Under the assumptions of Theorem~\ref{thm:almost_main}\,{(i)}, Lemma~\ref{lem:similarity_concentration}\,{(i)} applies. Likewise, under the assumptions of
Theorem~\ref{thm:almost_main}\,{(ii)}, Lemma~\ref{lem:similarity_concentration}\,{(ii)} applies.
\end{corollary}
\begin{proof}
In case (i), the condition
\[
  \sigma_N\sqrt{\log N} \ll t_N \ll \frac{\ell}{(N\log N)^{1/\rho}}
\]
implies \(1-p\ll N^{-1}\) by the Lemma~\ref{lem:_within_probability} and \(q\ll N^{-1}\) by
Corollary~\ref{cor:coordinate_smallball_common}. In case (ii), the condition
\[
  \sigma_N \ll t_N \ll \ell
\]
implies \(1-p\ll 1\) by Lemma~\ref{lem:_within_probability} and \(q\ll 1\) by
Corollary~\ref{cor:coordinate_smallball_common}.
\end{proof}

\subsubsection{Perturbation bound for leading eigenvectors}
\label{sec:perturbation_bound}

We restrict attention to (approximately) balanced labelings.
Let $Z_i \eqd \mathrm{Unif}\{1,2\}$ be i.i.d.\ and define
$N_1 = \sum_{i=1}^N \1\{Z_i=1\}$ and $N_2 = N-N_1$.
By Lemma~\ref{lem:Ni_concentration}, with the choice
\begin{equation}
  \eta_N \coloneqq \sqrt{\frac{2\log N}{N}},
  \label{eq:eta_N_choice}
\end{equation}
we have
\[
  \P\left( \left|N_1 - \tfrac{N}{2}\right| \le \eta_N N \right)
  \ge 1 - 2\exp\left(-\tfrac{2}{3}\,\eta_N^2 N\right)
  = 1 - 2 N^{-4/3}.
\]
Throughout this section, we condition on a fixed realization $\bZ=(Z_1,\dots,Z_N)\in\{1,2\}^N$
satisfying the (approximate balance) event
\begin{equation}
  G \;\coloneqq\; \left\{\left|N_1-\tfrac{N}{2}\right|\le \eta_N N\right\}.
  \label{eq:approx_balance}
\end{equation}
On $G$ we have
\begin{equation}
N_1=\frac{N}{2}\left(1 + O(\eta_N)\right),
\qquad
N_2=\frac{N}{2}\left(1 + O(\eta_N)\right),
\qquad
\sqrt{N_1N_2}=\frac{N}{2}\left(1 + O\left(\eta_N^2\right)\right).
\label{eq:balance_asymp_basic}
\end{equation}
Moreover, the estimates in this section are used only in regimes where \(1-p=o(1)\) and \(q=o(1)\); by
Lemma~\ref{lem:_within_probability} and Corollary~\ref{cor:coordinate_smallball_common}, this
holds under the assumptions of Theorem~\ref{thm:almost_main}. In particular, \(p+q=1+o(1)\).

Let
\(
C_1\coloneqq\{i:Z_i=1\},
\)
\(
C_2\coloneqq\{i:Z_i=2\},
\)
and define the orthonormal block-indicator vectors
\begin{equation}
\bu_1\coloneqq \frac{\1_{C_1}}{\sqrt{N_1}},
\qquad
\bu_2\coloneqq \frac{\1_{C_2}}{\sqrt{N_2}}.
\label{eq:u1u2_def}
\end{equation}

Let $\bW$ be the similarity matrix constructed in the algorithm. We use the convention $W_{ii}=0$ for all $i$, and define the conditional mean
\[
  \bW^* \;\coloneqq\; \E[\bW \given \bZ].
\]
For $i\neq j$, conditional on $\bZ$,
\[
  \E[W_{ij}\given \bZ]
  =
  \begin{cases}
    w_{\mathrm{in}}^{(1)}, & i,j\in C_1,\\[2pt]
    w_{\mathrm{in}}^{(2)}, & i,j\in C_2,\\[2pt]
    w_{\mathrm{out}}, & i\in C_1,\ j\in C_2 \ \text{or}\ i\in C_2,\ j\in C_1,
  \end{cases}
\]
where $w_{\mathrm{out}}=(N-2)q$ and
\begin{equation}
w_{\mathrm{in}}^{(1)}=(N_1-2)p+N_2 q,
\qquad
w_{\mathrm{in}}^{(2)}=(N_2-2)p+N_1 q.
\label{eq:win_two}
\end{equation}
Equivalently, after permuting indices so that $C_1$ comes first and $C_2$ second, $\bW^*$ has the block-constant form
\begin{equation}
\bW^*
=
\begin{bmatrix}
w_{\mathrm{in}}^{(1)}\left(\bJ_{N_1}-\bm{I}_{N_1}\right) & w_{\mathrm{out}}\bJ_{N_1\times N_2}\\[2pt]
w_{\mathrm{out}}\bJ_{N_2\times N_1} & w_{\mathrm{in}}^{(2)}\left(\bJ_{N_2}-\bm{I}_{N_2}\right)
\end{bmatrix},
\label{eq:Wstar_block_form}
\end{equation}
where $\bJ_{m\times \ell}$ and $\bJ_{m}$ denote the all-ones $m\times \ell$ and $m\times m$ matrices, and $\bm{I}_m$ is the $m\times m$ identity matrix.

Under \(1-p=o(1)\) and \(q=o(1)\), we have \(p+q=1+o(1)\), so on \(G\), \eqref{eq:win_two} and
\eqref{eq:balance_asymp_basic} give\begin{equation}
w_{\mathrm{in}}^{(1)} = (N-2)\frac{p+q}{2}\,(1+O(\eta_N)),
\quad
w_{\mathrm{in}}^{(2)} = (N-2)\frac{p+q}{2}\,(1+O(\eta_N)),
\quad
w_{\mathrm{out}}=(N-2)q,
\label{eq:win_wout_asymp}
\end{equation}
and hence $w_{\mathrm{in}}^{(1)}-w_{\mathrm{out}} = \frac{N-2}{2}(p-q)\,(1+O(\eta_N))$ and $w_{\mathrm{in}}^{(2)}-w_{\mathrm{out}} = \frac{N-2}{2}(p-q)\,(1+O(\eta_N))$.

Define
$\bQ\coloneqq [\bu_1\ \ \bu_2].$
From \eqref{eq:u1u2_def} and \eqref{eq:Wstar_block_form},
\[
\bW^*\bu_1
=
w_{\mathrm{in}}^{(1)}(N_1-1)\bu_1
+
w_{\mathrm{out}}\sqrt{N_1N_2}\,\bu_2,
\qquad
\bW^*\bu_2
=
w_{\mathrm{out}}\sqrt{N_1N_2}\,\bu_1
+
w_{\mathrm{in}}^{(2)}(N_2-1)\bu_2,
\]
so $\bW^*\bQ=\bQ\bB$ with
\begin{equation}
\bB
=
\begin{bmatrix}
a & c\\[2pt]
c & d
\end{bmatrix},
\qquad
a\coloneqq w_{\mathrm{in}}^{(1)}(N_1-1),\ \ 
d\coloneqq w_{\mathrm{in}}^{(2)}(N_2-1),\ \ 
c\coloneqq w_{\mathrm{out}}\sqrt{N_1N_2}.
\label{eq:B_def}
\end{equation}
Since $\bB$ is symmetric, it has an orthonormal eigenbasis $\bx_1,\bx_2\in\R^2$ with eigenvalues
$\lambda_1^*,\lambda_2^*$. In particular,
\begin{equation}
\lambda_1^*
=
\frac{a+d}{2}
+
\sqrt{\left(\frac{a-d}{2}\right)^2+c^2},
\qquad
\lambda_2^*
=
\frac{a+d}{2}
-
\sqrt{\left(\frac{a-d}{2}\right)^2+c^2}.
\label{eq:lambda12_B_explicit}
\end{equation}
On $G$, \eqref{eq:win_wout_asymp}, \eqref{eq:balance_asymp_basic} and \eqref{eq:B_def} give
\[
a=\frac{N}{2}w_{\mathrm{in}}^{(1)}(1+O(\eta_N)),
\qquad
d=\frac{N}{2}w_{\mathrm{in}}^{(2)}(1+O(\eta_N)),
\qquad
c=\frac{N}{2}w_{\mathrm{out}}(1+O(\eta_N)),
\]
so $a=d(1+O(\eta_N))$ and \eqref{eq:lambda12_B_explicit} yields
\[
\lambda_2^*=\frac{N}{2}\left(w_{\mathrm{in}}-w_{\mathrm{out}}\right)\,(1+O(\eta_N)),
\qquad
\text{where }w_{\mathrm{in}}\coloneqq\min\{w_{\mathrm{in}}^{(1)},\,w_{\mathrm{in}}^{(2)}\}.
\]

For any eigenpair $(\lambda,\bx)$ of $\bB$, we have
\begin{equation}
\bW^*(\bQ\bx)=\bQ(\bB\bx)=\lambda(\bQ\bx).
\label{eq:lift_eig}
\end{equation}
Moreover, if $N_1,N_2\ge 1$, then $\bQ=[\bu_1\ \bu_2]$ has full column rank, so $\bQ\bx\neq \0$. Thus $\lambda_1^*,\lambda_2^*$ are eigenvalues of $\bW^*$ and
$\mathrm{range}(\bQ)=\mathrm{span}\{\bu_1,\bu_2\}$ contains the corresponding eigenvectors. We next show that $\lambda_1^*, \lambda_2^*$ are the top two eigenvalues of $\bW^*$, so the top-$2$ eigenspace of $\bW^*$ equals
$\mathrm{span}\{\bu_1,\bu_2\}$. For $C_1$, take $\bv$ supported on $C_1$ and orthogonal to $\bu_1$. The constraint $\langle \bu_1,\bv\rangle=0$ is exactly $\sum_{j\in C_1} v_j=0$, so it removes one degree of freedom from the
$N_1$ free coordinates on $C_1$; hence this subspace has dimension $N_1-1$.
For such $\bv$ and $i\in C_1$,
\[
(\bW^*\bv)_i
=\sum_{\substack{j\in C_1\\ j\neq i}} w_{\mathrm{in}}^{(1)}v_j
= w_{\mathrm{in}}^{(1)}\left(\sum_{j\in C_1}v_j-v_i\right)
=-w_{\mathrm{in}}^{(1)}v_i,
\]
so $-w_{\mathrm{in}}^{(1)}$ has multiplicity $N_1-1$. Similarly, $-w_{\mathrm{in}}^{(2)}$ has multiplicity $N_2-1$. Since \(w_{\mathrm{in}}^{(1)},w_{\mathrm{in}}^{(2)}\ge 0\), the remaining eigenvalues of \(\bW^*\)
are \(-w_{\mathrm{in}}^{(1)}\) and \(-w_{\mathrm{in}}^{(2)}\), both nonpositive. Moreover, under
\(1-p=o(1)\) and \(q=o(1)\), \eqref{eq:win_wout_asymp} yields
\[
\lambda_2^*=\frac{N}{2}\left(w_{\mathrm{in}}-w_{\mathrm{out}}\right)(1+O(\eta_N))>0
\]
for all sufficiently large \(N\). Hence the eigenvalues in \eqref{eq:lambda12_B_explicit} are the
top two eigenvalues of \(\bW^*\).

On $G$, \eqref{eq:win_wout_asymp} gives $w_{\mathrm{in}}^{(1)}=w_{\mathrm{in}}^{(2)}(1+O(\eta_N))$. Hence, we have
\begin{equation}
\lambda_3^*
\;\coloneqq\;
\max\{-w_{\mathrm{in}}^{(1)},-w_{\mathrm{in}}^{(2)}\}
=
-w_{\mathrm{in}}(1+O(\eta_N)),
\qquad
\Delta \;\coloneqq\; \lambda_2^*-\lambda_3^*.
\label{eq:Delta_def}
\end{equation}
Consequently,
\begin{equation}
\Delta
=
\lambda_2^*-\lambda_3^*
=
\frac{N}{2}\left(w_{\mathrm{in}}-w_{\mathrm{out}}\right)\,(1+O(\eta_N))
=
\frac{N(N-2)}{4}(p-q)\,(1+O(\eta_N)).
\label{eq:spectral_gap}
\end{equation}

Let $\bU^*\in\R^{N\times 2}$ be a matrix with orthonormal columns spanning the top-$2$ eigenspace of $\bW^*$,
and let $\bU\in\R^{N\times 2}$ be the corresponding matrix for $\bW$.
Define the $\sin\Theta$ distance by
\[
  \| \sin \Theta(\bU, \bU^*) \|_F
  \;=\;
  \sqrt{2 - \| \bU^\top \bU^* \|_F^2}.
\]
The Frobenius Davis--Kahan theorem (see, e.g.\ \cite{pensky2024davis}) yields
\begin{equation}
  \| \sin \Theta(\bU, \bU^*) \|_F
  \;\leq\;
  2\,\Delta^{-1}\,\|\bW-\bW^*\|_F.
\label{eq:davis_kahan}
\end{equation}
Moreover,
\begin{equation}
  \inf_{\bO \in \cO_2} \| \bU - \bU^* \bO \|_F
  \;\leq\;
  \sqrt{2}\, \| \sin \Theta(\bU, \bU^*) \|_F,
\label{def:O^*}
\end{equation}
and the infimum is attained at some $\bO^*\in\cO_2$. Combining \eqref{eq:davis_kahan} and \eqref{def:O^*} gives
\begin{equation}
  \| \bU - \bU^* \bO^* \|_F
  \;\leq\;
  2\sqrt{2}\,\Delta^{-1}\,\|\bW-\bW^*\|_F,
\label{eq:perturbation_bound}
\end{equation}
which is the perturbation estimate used in the $K$-means step.

\subsubsection{\texorpdfstring{$K$}{K}-means clustering}
\label{subsec:kmeans-clustering}
Let $\mathbb{Z}_{N,K}$ denote the space of membership matrices, that is, $N \times K$ matrices with entries in $\{0, 1\}$ for which each row $i$ has only one non-zero element. According to \cite{kumar2004simple}, there exists a polynomial-time algorithm that, given $\bU \in \R^{N \times K}$ and any $\varepsilon > 0$, produces a pair $(\hat{\bZmat}, \hat{\bX}) \in \mathbb{Z}_{N,K} \times \mathbb{R}^{K \times K}$ satisfying
\begin{equation}
\| \hat{\bZmat}\hat{\bX} - \bU \|_F^2 \leq (1 + \varepsilon)\min_{\bZmat \in \mathbb{Z}_{N,K}, \bX \in \mathbb{R}^{K \times K}} \|\bZmat \bX - \bU \|_F^2.
\label{eq:k-means-objective}
\end{equation}
Each node $i$ is then assigned to the cluster $k$ for which $\hat{\bZmat}_{ik} = 1.$ The following lemma gives a deterministic bound for the permutation-invariant Hamming distance (see Equation \eqref{def:ham_star}) of a $K$-means solution.

\begin{lemma}[Adapted from Lemma 5.3 of \cite{lei2015consistency} by \cite{avrachenkov2022statistical}.]
\label{lem:$K$-means}
Fix $\varepsilon > 0$ and consider two matrices $\bU, \bV \in \mathbb{R}^{N \times K}$, where $\bV = \bZmat \bX$ for some $\bZmat \in \mathbb{Z}_{N,K}$ and $\bX \in \mathbb{R}^{K \times K}$. Let $(\hat{\bZmat}, \hat{\bX})$ be a $(1 + \varepsilon)$-approximate solution to the $K$-means objective \eqref{eq:k-means-objective}. Define $\bz$ and $\hat{\bz}$ as the corresponding membership vectors derived from $\bZmat$ and $\hat{\bZmat}$, respectively. Let $N_{\min}$ denote the smallest cluster size, and define the separation $\delta = \min_{k \neq \ell} \| \bX_{k\ast} - \bX_{\ell\ast} \|_2.$ If the following inequality holds:
\begin{equation}
4(2 + \varepsilon)\frac{\|\bU - \bV\|_F^2}{\delta^2} \leq N_{\min},
\label{ineq:$K$-means_condition}
\end{equation}
then the proportion of misclassified memberships, up to label permutations, is bounded by
\begin{equation} 
\Ham^*(\hat{\bz}, \bz) \leq 4(2 + \varepsilon)^2 \frac{\|\bU - \bV\|_F^2}{\delta^2}.
\label{eq:error_bound}
\end{equation}
\end{lemma}

\subsubsection{Proof of Theorem \ref{thm:almost_main}}
\label{sec:proof_2}

Recall the approximate balance event \(G\) from \eqref{eq:approx_balance}. On \(G\), the spectral gap \(\Delta\) defined in
\eqref{eq:Delta_def} satisfies \eqref{eq:spectral_gap}, and the subspace perturbation bound \eqref{eq:perturbation_bound} holds with
\(\bE\coloneqq \bW-\bW^*\):
\begin{equation}
\inf_{\bO\in\cO_2}\|\bU-\bU^*\bO\|_F^2
\;\le\;
8\,\Delta^{-2}\,\|\bE\|_F^2.
\label{eq:perturbation_bound_2}
\end{equation}
Let \(\bO^*\in\cO_2\) attain the infimum.

To apply Lemma~\ref{lem:$K$-means}, represent \(\bU^*\bO^*\) in the form \(\bZmat\bX\).
Let \(\bZmat\in\{0,1\}^{N\times 2}\) be the membership matrix associated with \((C_1,C_2)\):
\[
\Zmat_{i1}=\1\{i\in C_1\},
\qquad
\Zmat_{i2}=\1\{i\in C_2\}.
\]
Since the top-\(2\) eigenspace of \(\bW^*\) equals \(\mathrm{span}\{\bu_1,\bu_2\}\), there exists \(\widetilde{\bO}\in\cO_2\) such that
\[
\bU^*\bO^*=[\bu_1\ \bu_2]\widetilde{\bO}.
\]
Using \([\bu_1\ \bu_2]=\bZmat\,\mathrm{diag}(N_1^{-1/2},N_2^{-1/2})\), we obtain
\begin{equation}
\bU^*\bO^*=\bZmat\,\bX,
\qquad
\bX \;\coloneqq\; \mathrm{diag}(N_1^{-1/2},N_2^{-1/2})\,\widetilde{\bO}.
\label{eq:X_def}
\end{equation}
Define the row separation
\[
\delta^2 \;\coloneqq\; \|\bX_{1\ast}-\bX_{2\ast}\|_2^2.
\]
Since \(\widetilde{\bO}\in\cO_2\), its rows are orthonormal, so
\[
\delta^2
=\|N_1^{-1/2}\widetilde{\bO}_{1\ast}-N_2^{-1/2}\widetilde{\bO}_{2\ast}\|_2^2
=\frac{1}{N_1}+\frac{1}{N_2},
\]
and therefore
\begin{equation}
\frac{1}{\delta^2}
=\frac{1}{N_1^{-1}+N_2^{-1}}
=\frac{N_1N_2}{N}.
\label{eq:delta_inverse_exact}
\end{equation}
On \(G\), \eqref{eq:balance_asymp_basic} gives
\begin{equation}
\frac{1}{\delta^2}=\frac{N}{4}\,(1+O(\eta_N)),
\qquad
N_{\min}\coloneqq \min\{N_1,N_2\}=\frac{N}{2}\,(1+O(\eta_N)),
\label{eq:delta_and_Nmin_balanced}
\end{equation}
with \(\eta_N\) given by \eqref{eq:eta_N_choice}.
Let \((\hat{\bZmat},\hat{\bX})\) be a \((1+\varepsilon)\)-approximate \(K\)-means solution applied to \(\bU\)
as in \eqref{eq:k-means-objective}, and let \(\hat{\bz}\) be the induced membership vector.
Applying Lemma~\ref{lem:$K$-means} with \(\bU\) and \(\bZmat\bX=\bU^*\bO^*\) shows that if
\begin{equation}
4(2+\varepsilon)\frac{\|\bU-\bU^*\bO^*\|_F^2}{\delta^2}\le N_{\min},
\label{eq:kmeans_suff_cond_here}
\end{equation}
then
\begin{equation}
\Ham^*(\hat{\bz},\bz)\le 4(2+\varepsilon)^2\frac{\|\bU-\bU^*\bO^*\|_F^2}{\delta^2}.
\label{eq:kmeans_error_here}
\end{equation}

Using \eqref{eq:perturbation_bound_2}, a sufficient condition for \eqref{eq:kmeans_suff_cond_here} is
\begin{equation}
32(2+\varepsilon)\,\Delta^{-2}\,\frac{\|\bE\|_F^2}{\delta^2} \;\le\; N_{\min}.
\label{eq:suff_cond_E}
\end{equation}
On \(G\), \eqref{eq:delta_and_Nmin_balanced} yields
\(1/\delta^2=\frac{N}{4}\,(1+O(\eta_N))\) and \(N_{\min}=\frac{N}{2}\,(1+O(\eta_N))\), so \eqref{eq:suff_cond_E} is implied
(for all sufficiently large \(N\)) by
\[
\|\bE\|_F^2 \;\le\; \frac{\Delta^2}{16(2+\varepsilon)}\,(1+O(\eta_N)).
\]
Under \eqref{eq:spectral_gap}, this becomes
\begin{equation}
\|\bE\|_F^2 \;\le\; \frac{N^2(N-2)^2}{256(2+\varepsilon)}(p-q)^2\,(1+O(\eta_N)).
\label{ineq:sufficient_condition}
\end{equation}

On \(G\), if \eqref{ineq:sufficient_condition} holds, then \eqref{eq:kmeans_error_here} applies. Using \eqref{eq:perturbation_bound_2} at the minimizer
\(\bO^*\) gives
\[
\|\bU-\bU^*\bO^*\|_F^2 \le 8\,\Delta^{-2}\,\|\bE\|_F^2,
\]
so
\[
\Ham^*(\hat{\bz},\bz)
\le
4(2+\varepsilon)^2\frac{\|\bU-\bU^*\bO^*\|_F^2}{\delta^2}
\le
32(2+\varepsilon)^2\,\Delta^{-2}\,\frac{\|\bE\|_F^2}{\delta^2}.
\]
Substituting \(1/\delta^2=\frac{N}{4}(1+O(\eta_N))\) from \eqref{eq:delta_and_Nmin_balanced} yields
\[
\Ham^*(\hat{\bz},\bz)
\le
8(2+\varepsilon)^2\,\frac{N}{\Delta^2}\,\|\bE\|_F^2\,(1+O(\eta_N)).
\]
Finally, \eqref{eq:spectral_gap} gives \(\Delta^2=\frac{N^2(N-2)^2}{16}(p-q)^2\,(1+O(\eta_N))\), and therefore
\begin{equation}
\Ham^*(\hat{\bz},\bz)
\;\le\;
\frac{128(2+\varepsilon)^2}{N(N-2)^2(p-q)^2}\,\|\bE\|_F^2\,(1+O(\eta_N))
=
\frac{128(2+\varepsilon)^2}{N^3(p-q)^2}\,\|\bE\|_F^2\,(1+O(\eta_N)).
\label{ineq:hamming_bound}
\end{equation}

\paragraph{Proof of Theorem \ref{thm:almost_main}:(i).}
\label{sec:proof_2a}
Let \(C\coloneqq128(2+\varepsilon)^{2}\). Recall that on \(G\) and on the event \eqref{ineq:sufficient_condition} we have the bound
\eqref{ineq:hamming_bound}. Set
\[
T \;\coloneqq\; \frac{1}{256(2+\varepsilon)}\,N^{2}(N-2)^{2}(p-q)^{2}.
\]
For all sufficiently large \(N\), the event \(\{\|\bE\|_F^{2}\le T\}\cap G\) implies \eqref{ineq:sufficient_condition}. Therefore,
\[
\Ham^{*}(\hat{\bz},\bz)
\wle
\frac{C}{(p-q)^2}\cdot\frac{\|\bE\|_{F}^{2}}{N^{3}}\,(1+O(\eta_N))\,\1_{\{\|\bE\|_F^{2}\le T\}}\1_G
\;+\; N\,\1_{G^{c}}
\;+\; N\,\1_{\{\|\bE\|_F^{2}> T\}}.
\]
Taking expectations and using \(\E[\|\bE\|_F^{2}\1_{\{\|\bE\|_F^{2}\le T\}}\1_G]\le \E[\|\bE\|_F^{2}]\) gives
\[
\E\left[\Ham^{*}(\hat{\bz},\bz)\right]
\wle
\frac{C}{(p-q)^2}\cdot\frac{\E\left[\|\bE\|_{F}^{2}\right]}{N^{3}}\,(1+O(\eta_N))
\;+\;
N\,\P(G^{c})
\;+\;
N\,\P(\|\bE\|_F^{2}>T),
\]
where \(\eta_N=o(1)\) by \eqref{eq:eta_N_choice}.
By Corollary~\ref{cor:similarity_concentration_conditions}, Lemma~\ref{lem:similarity_concentration}:(i) gives
\(\E\|\bE\|_{F}^{2}\ll N^{3}\). Moreover, Lemma~\ref{lem:_within_probability} and
Corollary~\ref{cor:coordinate_smallball_common} yield \(1-p\ll N^{-1}\) and \(q\ll N^{-1}\), hence
\(p-q=1-o(1)\). Moreover, \(N\P(G^{c})=o(1)\) by Lemma~\ref{lem:Ni_concentration}.
Finally, Markov's inequality yields
\[
\P(\|\bE\|_F^{2}>T)\le \frac{\E\|\bE\|_F^{2}}{T}
\ll
\frac{N^{3}}{N^{2}(N-2)^{2}(p-q)^{2}}
=o(N^{-1}),
\]
so \(N\,\P(\|\bE\|_F^{2}>T)=o(1)\). Hence \(\E[\Ham^{*}(\hat{\bz},\bz)]=o(1)\),  which proves part (i).

\paragraph{Proof of Theorem \ref{thm:almost_main}:(ii).}
\label{sec:proof_2b}
Let \(C\coloneqq128(2+\varepsilon)^{2}\), \(\gamma > 0\), and define
\[
E_{\gamma}\coloneqq\{\|\bE\|_{F}\le \gamma N^{2}|p-q|\}\cap G.
\]
By Lemma~\ref{lem:_within_probability} and Corollary~\ref{cor:coordinate_smallball_common}, we have
\(1-p=o(1)\) and \(q=o(1)\), hence \(p-q=1-o(1)\). In addition,
Corollary~\ref{cor:similarity_concentration_conditions} shows that
Lemma~\ref{lem:similarity_concentration}:(ii) applies. Fix \(\delta>0\) and choose
\(\gamma=(\delta/(3C))^{1/2}\), decreasing it if needed so that \(\gamma^{2}<1/(256(2+\varepsilon))\).
Then \(E_{\gamma}\) implies \eqref{ineq:sufficient_condition} for all sufficiently large \(N\), and hence
\eqref{ineq:hamming_bound} holds on \(E_{\gamma}\). On \(E_{\gamma}\) we have \(\|\bE\|_F^2\le \gamma^2 N^4(p-q)^2\), so
\[
\frac{1}{N}\Ham^{*}(\hat{\bz},\bz)
\wle
\frac{C}{N^4(p-q)^2}\,\|\bE\|_F^2\,(1+O(\eta_N))
\wle
C\gamma^{2}(1+O(\eta_N))
=
\frac{\delta}{3}\,(1+O(\eta_N))
\le \frac{\delta}{2}
\]
for all sufficiently large \(N\) since \(\eta_N = o(1)\), while \(\Ham^{*}(\hat{\bz},\bz)\le N\) on \(E_{\gamma}^{c}\). Therefore,
\[
\E\left[\frac{1}{N}\Ham^{*}(\hat{\bz},\bz)\right]
\wle
\frac{\delta}{2}
+\P(G^{c})
+\P(\|\bE\|_{F}>\gamma N^{2}|p-q|).
\]
Here \(\P(G^{c})=o(1)\) by Lemma~\ref{lem:Ni_concentration}, and by Markov's inequality applied to \(\|\bE\|_F^{2}\),
\[
\P(\|\bE\|_{F}>\gamma N^{2}|p-q|)
=
\P(\|\bE\|_{F}^{2}>\gamma^{2}N^{4}(p-q)^{2})
\le
\frac{\E\|\bE\|_{F}^{2}}{\gamma^{2}N^{4}(p-q)^{2}}
=o(1),
\]
since \(\E\|\bE\|_{F}^{2}\ll N^{4}\) by Lemma~\ref{lem:similarity_concentration}:(ii) and
\(p-q=1-o(1)\).
Taking \(\limsup\) in the above display yields
\[
\limsup_{N\to\infty}\E\left[\frac{1}{N}\Ham^{*}(\hat{\bz},\bz)\right]\le \frac{\delta}{2}.
\]
Since \(\delta>0\) is arbitrary, it follows that
\[
\lim_{N\to\infty}\E\left[\frac{1}{N}\Ham^{*}(\hat{\bz},\bz)\right]=0,
\]
which proves part (ii), and hence the theorem. \qed

\subsection{Data-driven threshold selection}
\label{sec:data-driven}

In this section, we prove the consistency of Algorithm~\ref{alg:line_clustering_spectral_nonparameteric}. Throughout this section, we assume that $\nu$ satisfies both Assumptions~\ref{ass:nu_lower_smallball} and \ref{ass:nu_upper_smallball}. We sample
\(M=M_N\) triples \(\cT=(\tau_1,\dots,\tau_M)\) independently and uniformly at random (with replacement) from
\(\binom{[N]}{3}\), and define the empirical distribution function
\begin{equation}
F_M(t)\weq \frac1M\sum_{i=1}^{M}\bm1\left\{\sigmaTLS(\tau_i)\le t\right\},
\qquad t>0.
\label{def:FM}
\end{equation}
Let
\[
G_N(t)\coloneqq \E[F_M(t)] = \tfrac14\,p(t)+\tfrac34\,q(t).
\]
If \(s_{(1)}\le s_{(2)}\le\cdots\le s_{(M)}\) are the order statistics of
\(s_i\coloneqq \sigma_\tls(\bX_{i_1},\bX_{i_2},\bX_{i_3})\), where \(\tau_i=\{i_1,i_2,i_3\}\), then
Lemma~\ref{lem:TLS_distinct} implies that the sampled TLS residuals are almost surely distinct. Hence
\(F_M\) is a step function with jumps at \(s_{(1)},\dots,s_{(M)}\), and the minimizer of
\[
|F_M(t)-\tfrac14|
\]
is attained at the step closest to \(M/4\), namely at
\[
t_N^*\weq s_{(\newround{M/4})}.
\]

We use only \(M\) sampled triples, with \(1\ll M\ll N^{1/2}\), in order to estimate the threshold.
This keeps the threshold-selection stage asymptotically negligible, while still allowing \(F_M\) to
approximate the residual distribution uniformly. The ingredients are collected in
Appendix~\ref{ap:concentration_sub}: Lemma~\ref{lem:find_concentration} gives a uniform concentration bound for
\(F_M\), Lemma~\ref{lem:expectation_gap}:(i) provides a vanishing benchmark sequence \(s_N\) for which \(F_M(s_N)\)
is close to \(1/4\), Lemma~\ref{lem:expectation_gap}:(ii) rules out thresholds bounded away from \(0\), and
Lemma~\ref{lem:selection_rule} shows that if \(G_N(t_N)\) is close to \(1/4\) for a vanishing deterministic
threshold sequence \(t_N\), then both \(q(t_N)\) and \(1-p(t_N)\) are small.

\subsection{Proof of Theorem \ref{thm:main_nonparametric}}

Algorithm~\ref{alg:line_clustering_spectral_nonparameteric} samples \(M\ll N^{1/2}\) triples
\(\tau_1,\dots,\tau_M\eqd\Unif\!\left(\binom{[N]}{3}\right)\). Let
\[
\cM \coloneqq \bigcup_{m=1}^M \tau_m
\]
be the set of sampled nodes. Then \(|\cM|\le 3M\ll N^{1/2}\). The set \(\cM\) is used for threshold
selection, while \(\cM^c=[N]\setminus \cM\) is used for recovery. Since the observations are independent across
vertices, these two stages are independent.

Fix a deterministic sequence \(\delta_M\) such that \(M^{-1/2}\ll \delta_M\ll 1\), and let
\(s_N,\eta_N=o(1)\) be the sequences from Lemma~\ref{lem:expectation_gap}:(i). Define
\[
E_0\coloneqq \left\{\sup_{t>0}|F_M(t)-G_N(t)|\le \delta_M\right\},
\qquad
E_1\coloneqq \left\{|F_M(s_N)-\tfrac14|\le \eta_N\right\}.
\]
By Lemmas~\ref{lem:find_concentration} and \ref{lem:expectation_gap}:(i),
\(
\P(E_0\cap E_1)=1-o(1).
\)
On \(E_1\), the benchmark \(s_N\) is feasible, so
\begin{equation}
\label{eq:tn*_concentration_14}
|F_M(t_N^*)-\tfrac14|
\le
|F_M(s_N)-\tfrac14|
\le \eta_N.
\end{equation}

We first show that \(t_N^*\to 0\) in probability. Fix \(\varepsilon>0\), and let \(c_\varepsilon>0\) be the
constant from Lemma~\ref{lem:expectation_gap}:(ii). Define
\[
E_{2,\varepsilon}\coloneqq \left\{\inf_{t\ge \varepsilon}|F_M(t)-\tfrac14|\ge c_\varepsilon\right\}.
\]
Then \(\P(E_{2,\varepsilon})=1-o(1)\). Since \(\eta_N=o(1)\), we have \(\eta_N<c_\varepsilon\) for all sufficiently
large \(N\). Thus on \(E_1\cap E_{2,\varepsilon}\) the minimizer \(t_N^*\) cannot satisfy \(t_N^*\ge \varepsilon\), so
\[
\P(t_N^*\ge \varepsilon)\le \P(E_1^c)+\P(E_{2,\varepsilon}^c)=o(1).
\]
Hence \(t_N^*\to 0\) in probability. Therefore there exists a deterministic sequence \(r_N=o(1)\) such that
\[
\P(t_N^*\le r_N)=1-o(1).
\]

Choose a deterministic sequence \(\varepsilon_N=o(1)\) such that \(\eta_N+\delta_M\lesssim \varepsilon_N\) and
\begin{equation}
\label{eq:sigma_ll}
 \sigma_N \ll \ell\left(\frac{\varepsilon_N}{\log(1/\varepsilon_N)}\right)^{1/\rho}
 \log^{-1/2}(1/\varepsilon_N).
\end{equation}
Such a choice is possible because \(\eta_N+\delta_M=o(1)\), \(\sigma_N\ll 1\), and the right-hand side of
\eqref{eq:sigma_ll} tends to zero arbitrarily slowly as \(\varepsilon_N\downarrow 0\).

Let
\[
E_N\coloneqq E_0\cap E_1\cap\{t_N^*\le r_N\}.
\]
Then \(\P(E_N)=1-o(1)\). On \(E_N\), \eqref{eq:tn*_concentration_14} and the definition of \(E_0\) give
\[
|G_N(t_N^*)-\tfrac14|
\le |G_N(t_N^*)-F_M(t_N^*)|+|F_M(t_N^*)-\tfrac14|
\le \delta_M+\eta_N
\lesssim \varepsilon_N.
\]

Fix \(C>0\) such that \(\delta_M+\eta_N\le C\varepsilon_N\) for all sufficiently large \(N\), and define
\[
S_N\coloneqq \Bigl\{\,t\in(0,r_N]:\ |G_N(t)-\tfrac14|\le C\varepsilon_N\,\Bigr\},
\qquad
\gamma_N\coloneqq \sup_{t\in S_N}\bigl(q(t)\vee(1-p(t))\bigr),
\]
with the convention \(\sup\emptyset=0\). We claim that \(\gamma_N=o(1)\). Suppose this does not hold. Then, after passing to a
subsequence, there exists \(c>0\) such that \(\gamma_N\ge c\) for all \(N\). By definition of the supremum, we may choose a deterministic
sequence \(u_N\in S_N\) such that
\[
q(u_N)\vee(1-p(u_N))\ge \frac{c}{2}.
\]
Since \(u_N\le r_N\), we have \(u_N=o(1)\), and because \(u_N\in S_N\),
\[
|G_N(u_N)-\tfrac14|\le C\varepsilon_N.
\]
Lemma~\ref{lem:selection_rule} therefore applies to the deterministic sequence \(u_N\) and yields
\[
q(u_N)\lesssim \varepsilon_N,
\qquad
1-p(u_N)\lesssim \varepsilon_N,
\]
which contradicts \(q(u_N)\vee(1-p(u_N))\ge c/2\). Hence \(\gamma_N=o(1)\).

On \(E_N\), we have \(t_N^*\le r_N\) and \(|G_N(t_N^*)-\tfrac14|\le C\varepsilon_N\), so \(t_N^*\in S_N\). Therefore,
\[
q(t_N^*)\le \gamma_N,
\qquad
1-p(t_N^*)\le \gamma_N
\]
on \(E_N\). In particular, with probability \(1-o(1)\), the random threshold \(t_N^*\) satisfies
\[
q(t_N^*)=o(1),
\qquad
1-p(t_N^*)=o(1).
\]

Let \(\mathcal F_N\) denote the sigma algebra generated by the threshold-selection stage. Then \(t_N^*\) and \(E_N\) are \(\mathcal F_N\)-measurable, while the data on \(\cM^c\) are independent of \(\mathcal F_N\). Moreover, conditional on \(\mathcal F_N\), the restricted sample \((\bZ_{\cM^c},\bX_{\cM^c})\) is again a Gaussian line mixture model on \(|\cM^c|=N-o(N)\) vertices. On \(E_N\), we have
\[
(1-p(t_N^*))+q(t_N^*)\le 2\gamma_N=o(1),
\]
and hence \(p(t_N^*)-q(t_N^*)=1-o(1)\). Therefore the same estimates as in the proof of
Theorem~\ref{thm:almost_main}:(ii), with \(N\) replaced by \(|\cM^c|\) and applying Lemma~\ref{lem:similarity_concentration}:(ii), yield
\[
\E\!\left[\Ham^*(\hat{\bz}_{\cM^c},\bz_{\cM^c})\given \mathcal F_N\right]\1_{E_N}
=o(|\cM^c|)\,\1_{E_N}.
\]
Taking expectations gives
\[
\E\!\left[\Ham^*(\hat{\bz}_{\cM^c},\bz_{\cM^c})\,\1_{E_N}\right]=o(N).
\]

Finally, since \(\Ham^*(\hat{\bz},\bz)\le |\cM|+\Ham^*(\hat{\bz}_{\cM^c},\bz_{\cM^c})\), we have
\[
\frac1N\,\E\!\left[\Ham^*(\hat{\bz},\bz)\right]
\le
\frac{|\cM|}{N}
+\frac1N\,\E\!\left[\Ham^*(\hat{\bz}_{\cM^c},\bz_{\cM^c})\,\1_{E_N}\right]
+\P(E_N^c),
\]
and the right-hand side is \(o(1)\). This proves almost exact recovery.
\qed
\appendix

\section{Decision theory}
\label{ap:decision_theory}

Throughout this appendix, we work under the assumptions and notation of
Section~\ref{sec:lower_bound_for_bayes_risk}. In particular, $(Z,X)$, the
densities \(f_1,f_2\) with respect to \(\mu\), the posterior probabilities
\[
 \P(Z=k\given X=x)=\frac{f_k(x)}{f_1(x)+f_2(x)},\qquad k=1,2,
\]
the single-coordinate MAP rule \(\tilde t\), the coordinatewise MAP estimator
\(\tbt\) from \eqref{def:coordinatewise_map}, the joint law \(\P\) of
\((\bZ,\bX)\), the marginal law \(\P_2\) of \(\bX\), and the full-measure set
\(\cR\) from Definition~\ref{def:good_set_bayes} are all as defined there.

\subsection{Bayes optimality under permutation-invariant Hamming loss}

For \(\bw,\bz\in\cZ^N\), write
\[
 \Ham^*(\bw,\bz)
 \;\coloneqq\;
 h\!\left(\Ham(\bw,\bz)\right),
 \qquad
 h(s)\coloneqq \min\{s,N-s\},\qquad s=0,\dots,N.
\]
For \(\bw\in\cZ^N\) and \(\bx\in\cX^N\), define the conditional and average risks
\[
 r^*(\bw\given\bx)
 \;\coloneqq\;
 \E\!\left[\Ham^*(\bw,\bZ)\given\bX=\bx\right],
 \qquad
 R^*(\bt)
 \;\coloneqq\;
 \E\!\left[\Ham^*(\bt(\bX),\bZ)\right].
\]

\begin{lemma}
\label{lem:pointwise-bayes_r}
A measurable rule \(\hat\bt:\cX^N\to\cZ^N\) minimizes \(R^*(\bt)\) if and only if
\[
 r^*(\hat\bt(\bx)\given\bx)
 \wle
 r^*(\bw\given\bx)
 \qquad\text{for all }\bw\in\cZ^N,
\]
for \(\P_2\)-almost every \(\bx\in\cX^N\).
\end{lemma}

\begin{proof}
Since \(0\le \Ham^*(\bw,\bz)\le N/2\), all risks are finite. The claim is the
standard Bayes optimality criterion applied with loss \(\Ham^*\); see
\cite[Theorem~1.1, Section~4]{lehmann1998point}.
\end{proof}

\subsection{Coordinatewise improvement}

For \(i\in[N]\) and \(\bw=(w_1,\dots,w_N)\in\cZ^N\), define the coordinate flip
\[
 w_\ell^{(i)}
 \;\coloneqq\;
 \begin{cases}
 3-w_i, & \ell=i,\\
 w_\ell, & \ell\neq i.
 \end{cases}
\]
For fixed \(\bx\in\cX^N\) and \(\bw\in\cZ^N\), define
\[
 \delta_i(\bw\given \bx)
 \;\coloneqq\;
 r^*(\bw\given\bx)-r^*(\bw^{(i)}\given\bx),
\]
and
\[
 \delta_{i^*}(\bw\given \bx)
 \;\coloneqq\;
 \max_{i\in\{0,1,\dots,N\}}
 \left\{r^*(\bw\given\bx)-r^*(\bw^{(i)}\given\bx)\right\},
\]
with the convention \(\bw^{(0)}=\bw\).

\begin{lemma}\label{lem:local-exchange-final}
Fix \(\bx\in\cR\) and a decision vector \(\bw\in\cZ^N\) which is not equal to
\(\tbt(\bx)\) or \(3-\tbt(\bx)\). Then
\[
 \delta_{i^*}(\bw \given \bx)
 \;>\;0.
\]
\end{lemma}

\begin{proof}
Fix \(\bx=(x_1,\dots,x_N)\in\cR\) and \(\bw=(w_1,\dots,w_N)\in\cZ^N\) as in the
statement, and write \(\tilde t_\ell=\tilde t(x_\ell)\). Applying
\eqref{eq:posterior_def_generic} with \eqref{eq:bayes_rule_generic}, the
conditional error corresponding to \(\tbt\) becomes
\begin{equation}
\label{eq:Bayes_error_vector}
\varepsilon_\ell
\;\coloneqq\;
\P(Z_\ell\neq\tilde t_\ell\given\bX=\bx)
=
\frac{\min\{f_1(x_\ell),f_2(x_\ell)\}}{f_1(x_\ell)+f_2(x_\ell)},
\qquad \ell\in[N].
\end{equation}

Since \(\bw\) is not equal to \(\tbt(\bx)\) or \(3-\tbt(\bx)\), there exist
indices with \(w_i\neq\tilde t_i\) and indices with \(w_j=\tilde t_j\). We fix
\(i\neq j\in[N]\) such that
\begin{equation}
w_i\neq\tilde t_i,
\qquad
w_j=\tilde t_j.
\label{eq:fixed_ij_bayes}
\end{equation}

For a decision vector \(\bv=(v_1,\dots,v_N)\in\cZ^N\), define the
misclassification indicators
\[
 M_\ell(\bv)\coloneqq \1\{Z_\ell\neq v_\ell\},
 \qquad \ell\in[N],
\]
and the partial sum
\[
 S(\bv)\coloneqq \sum_{\ell\notin\{i,j\}} M_\ell(\bv).
\]
Then
\[
 \Ham(\bv,\bZ)=S(\bv)+M_i(\bv)+M_j(\bv),
 \qquad
 \Ham^*(\bv,\bZ)=h\!\left(S(\bv)+M_i(\bv)+M_j(\bv)\right),
\]
and
\[
 r^*(\bv\given \bx)
 = \E_{\bx}\!\left[h\!\left(S(\bv)+M_i(\bv)+M_j(\bv)\right)\right],
\]
where \(\P_{\bx}(\cdot)=\P(\cdot\given \bX=\bx)\) and \(\E_{\bx}\) is the
corresponding conditional expectation.

By independence of \((Z_i,X_i)\) across \(i\) and \eqref{eq:Bayes_error_vector},
for each \(i\) we have
\[
 \P_{\bx}(Z_i\neq v_i)
 =
 \begin{cases}
 \varepsilon_i, & v_i=\tilde t_i,\\[1pt]
 1-\varepsilon_i, & v_i\neq\tilde t_i,
 \end{cases}
\]
and the random variables \(M_1(\bv),\dots,M_N(\bv)\) are independent under
\(\P_{\bx}\).
In particular, for the \(i,j\) fixed in \eqref{eq:fixed_ij_bayes},
\[
 \P_{\bx}(M_i(\bw)=1)=1-\varepsilon_i,
 \qquad
 \P_{\bx}(M_j(\bw)=1)=\varepsilon_j,
\]
while for the flipped vectors
\[
 \P_{\bx}(M_i(\bw^{(i)})=1)=\varepsilon_i,\quad
 \P_{\bx}(M_j(\bw^{(i)})=1)=\varepsilon_j,
\]
\[
 \P_{\bx}(M_i(\bw^{(j)})=1)=1-\varepsilon_i,\quad
 \P_{\bx}(M_j(\bw^{(j)})=1)=1-\varepsilon_j.
\]
The coordinates \(\ell\notin\{i,j\}\) are unchanged, so
\(S(\bw)\stackrel{d}{=}S(\bw^{(i)})\stackrel{d}{=}S(\bw^{(j)})\) under
\(\P_{\bx}\); we denote these random variables simply by \(S\).

Define the forward difference
\[
 \Delta h(s)\coloneqq h(s+1)-h(s),
 \qquad s=0,1,\dots,N-1.
\]
Conditioning on \((S,M_j(\bw))\) and averaging over \(M_i(\bw)\) gives
\begin{align*}
 r^*(\bw\given \bx)
 &=\E_{\bx}\!\left[h(S+M_i(\bw)+M_j(\bw))\right]\\
 &=\E_{\bx}\!\left[h(S+M_j(\bw))\right]
 +\P_{\bx}\!\left(M_i(\bw)=1\right)\,
 \E_{\bx}\!\left[\Delta h(S+M_j(\bw))\right]\\
 &=\E_{\bx}\!\left[h(S+M_j(\bw))\right]
 +(1-\varepsilon_i)\,\E_{\bx}\!\left[\Delta h(S+M_j(\bw))\right],
\end{align*}
and similarly
\[
 r^*(\bw^{(i)}\given \bx)
 =\E_{\bx}\!\left[h(S+M_j(\bw))\right]
 +\varepsilon_i\,\E_{\bx}\!\left[\Delta h(S+M_j(\bw))\right].
\]
Thus
\begin{equation}\label{eq:exchange-i-short}
 r^*(\bw\given \bx)-r^*(\bw^{(i)}\given \bx)
 =(1-2\varepsilon_i)\,\E_{\bx}\!\left[\Delta h(S+M_j(\bw))\right].
\end{equation}
An analogous calculation, swapping the roles of \(i\) and \(j\), yields
\begin{equation}\label{eq:exchange-j-short}
 r^*(\bw\given \bx)-r^*(\bw^{(j)}\given \bx)
 =-(1-2\varepsilon_j)\,\E_{\bx}\!\left[\Delta h(S+M_i(\bw))\right].
\end{equation}

Next, we expand the expectations in
\eqref{eq:exchange-i-short}--\eqref{eq:exchange-j-short}.
Using independence,
\begin{align}
 \E_{\bx}\!\left[\Delta h(S+M_i(\bw))\right]
 &=\E_{\bx}[\Delta h(S)]
 +(1-\varepsilon_i)\,\E_{\bx}\!\left[\Delta h(S+1)-\Delta h(S)\right],
 \label{eq:affine-i-short}\\[2pt]
 \E_{\bx}\!\left[\Delta h(S+M_j(\bw))\right]
 &=\E_{\bx}[\Delta h(S)]
 +\varepsilon_j\,\E_{\bx}\!\left[\Delta h(S+1)-\Delta h(S)\right].
 \nonumber
\end{align}
Subtracting gives
\begin{equation}\label{eq:affine-diff-short}
 \E_{\bx}\!\left[\Delta h(S+M_j(\bw))\right]
 -\E_{\bx}\!\left[\Delta h(S+M_i(\bw))\right]
 =-(1-\varepsilon_i-\varepsilon_j)\,
 \E_{\bx}\!\left[\Delta h(S+1)-\Delta h(S)\right].
\end{equation}

Since \(h(s)=\min\{s,N-s\}\), its forward difference is
\[
 \Delta h(s)=
 \begin{cases}
 1,& s\le \lfloor N/2\rfloor-1,\\[1pt]
 0,& \text{$N$ odd and } s=\lfloor N/2\rfloor,\\[1pt]
 -1,& s\ge \lceil N/2\rceil,
 \end{cases}
\]
and is nonincreasing in \(s\). Hence
\begin{align}
 \E_{\bx}\!\left[\Delta h(S)\right]
 &= \sum_{s=0}^{N-2} \Delta h(s)\,\P_{\bx}(S=s) \nonumber\\
 &= \sum_{s\le \lfloor N/2\rfloor-1} \P_{\bx}(S=s)
 \;-\;\sum_{s\ge \lceil N/2\rceil} \P_{\bx}(S=s),
 \label{eq:Ex-Delta-hS-short}
\end{align}
and similarly
\begin{align}
 \E_{\bx}\!\left[\Delta h(S+1)\right]
 &= \sum_{s=0}^{N-2} \Delta h(s+1)\,\P_{\bx}(S=s) \nonumber\\
 &= \sum_{s\le \lfloor N/2\rfloor-2} \P_{\bx}(S=s)
 \;-\;\sum_{s\ge \lceil N/2\rceil-1} \P_{\bx}(S=s).
 \label{eq:Ex-Delta-hSplus1-short}
\end{align}
Subtracting \eqref{eq:Ex-Delta-hS-short} from
\eqref{eq:Ex-Delta-hSplus1-short} yields
\begin{equation}\label{eq:jump-mass-short}
 \E_{\bx}\!\left[\Delta h(S+1)-\Delta h(S)\right]
 = -\P_{\bx}\!\left(S=\lfloor N/2\rfloor-1\right)
 -\P_{\bx}\!\left(S=\lceil N/2\rceil-1\right),
\end{equation}
and combining \eqref{eq:affine-diff-short} and \eqref{eq:jump-mass-short} gives
\begin{align}
 &\E_{\bx}\!\left[\Delta h(S+M_j(\bw))\right]
 -\E_{\bx}\!\left[\Delta h(S+M_i(\bw))\right] \nonumber\\[2pt]
 &\qquad=
 (1-\varepsilon_i-\varepsilon_j)\,
 \left(
 \P_{\bx}\!\left(S=\lfloor N/2\rfloor-1\right)
 +\P_{\bx}\!\left(S=\lceil N/2\rceil-1\right)
 \right).
 \label{eq:affine-diff-final-short}
\end{align}

Recalling
\[
 \delta_i(\bw \given \bx)
 \coloneqq
 r^*(\bw\given \bx)-r^*(\bw^{(i)}\given \bx),\qquad
 \delta_j(\bw \given \bx)
 \coloneqq
 r^*(\bw\given \bx)-r^*(\bw^{(j)}\given \bx),
\]
let
\[
 \delta_{i,j}(\bw \given \bx)
 \coloneqq
 (1-2\varepsilon_j)\,\delta_i(\bw \given \bx)
 + (1-2\varepsilon_i)\,\delta_j(\bw \given \bx).
\]
Using \eqref{eq:exchange-i-short}, \eqref{eq:exchange-j-short}, and
\eqref{eq:affine-diff-final-short}, we obtain
\begin{align}
 \delta_{i,j}(\bw \given \bx)
 &=
 (1-2\varepsilon_i)(1-2\varepsilon_j)
 \left(
 \E_{\bx}[\Delta h(S+M_j(\bw))]
 -\E_{\bx}[\Delta h(S+M_i(\bw))]
 \right) \nonumber\\[2pt]
 &=
 (1-2\varepsilon_i)(1-2\varepsilon_j)(1-\varepsilon_i-\varepsilon_j)
 \left(
 \P_{\bx}\!\left(S=\lfloor N/2\rfloor-1\right)
 +\P_{\bx}\!\left(S=\lceil N/2\rceil-1\right)
 \right).
 \label{eq:DeltaR-final-short}
\end{align}

We now show that the right-hand side of \eqref{eq:DeltaR-final-short} is strictly
positive. On \(\cR\) we have \(\varepsilon_i,\varepsilon_j\in(0,1/2)\), so
\[
 1-2\varepsilon_i>0,\quad
 1-2\varepsilon_j>0,\quad
 1-\varepsilon_i-\varepsilon_j>0.
\]
Moreover, for each \(\ell\notin\{i,j\}\),
\[
 \P_{\bx}\!\left(M_\ell(\bw)=0\right)>0
 \quad\text{and}\quad
 \P_{\bx}\!\left(M_\ell(\bw)=1\right)>0,
\]
because \(\varepsilon_\ell\in(0,1/2)\) and \(w_\ell\) either coincides with
\(\tilde t_\ell\) or not. Therefore \(S(\bw)\) is a sum of independent Bernoulli
random variables, each taking the values \(0\) and \(1\) with positive probability.
For any \(m\in\{0,\dots,N-2\}\) one can choose a configuration
\((m_\ell)_{\ell\notin\{i,j\}}\in\{0,1\}^{N-2}\) with
\(\sum_{\ell\notin\{i,j\}} m_\ell=m\), so
\[
 \P_{\bx}\!\left(S(\bw)=m\right)
 \wge
 \prod_{\ell\notin\{i,j\}} \P_{\bx}\!\left(M_\ell(\bw)=m_\ell\right)
 \;>\;0.
\]
In particular,
\[
 \P_{\bx}\!\left(S(\bw)=\lfloor N/2\rfloor-1\right)
 +\P_{\bx}\!\left(S(\bw)=\lceil N/2\rceil-1\right)
 \;>\;0.
\]
Thus the product in \eqref{eq:DeltaR-final-short} is strictly positive, and
\[
 \delta_{i,j}(\bw \given \bx) > 0.
\]

Finally, since
\[
 \delta_{i,j}(\bw \given \bx)
 = (1-2\varepsilon_i)\,\delta_i(\bw \given \bx)
 +(1-2\varepsilon_j)\,\delta_j(\bw \given \bx) > 0
\]
with \(1-2\varepsilon_i>0\) and \(1-2\varepsilon_j>0\), at least one of
\(\delta_i(\bw \given \bx)\) or \(\delta_j(\bw \given \bx)\) must be strictly
positive. By definition of \(\delta_{i^*}(\bw \given \bx)\),
\[
 \delta_{i^*}(\bw \given \bx)
 \ge \max\{\delta_i(\bw \given \bx),\delta_j(\bw \given \bx)\}>0,
\]
proving the claim.
\end{proof}

\subsection{Proof of Proposition~\ref{prop:pi-hamming-cwmap}}
\label{ap:proof_of_prop:pi-hamming-cwmap}

For every \(\bw,\bz\in\cZ^N\),
\[
 \Ham^*(3-\bw,\bz)=\Ham^*(\bw,\bz),
\]
so for every \(\bx\),
\[
 r^*(\tbt(\bx)\given\bx)=r^*(3-\tbt(\bx)\given\bx).
\]
Now fix \(\bx\in\cR\). If \(\bw\notin\{\tbt(\bx),\,3-\tbt(\bx)\}\), then
Lemma~\ref{lem:local-exchange-final} gives
\[
 \delta_{i^*}(\bw\given\bx)>0.
\]
Hence there exists \(i\in[N]\) such that
\[
 r^*(\bw^{(i)}\given\bx)<r^*(\bw\given\bx),
\]
and therefore \(\bw\) cannot minimize \(r^*(\cdot\given\bx)\). It follows that every
pointwise minimizer of the conditional risk belongs to
\(\{\tbt(\bx),\,3-\tbt(\bx)\}\). Since the two elements of this set have equal conditional
risk, both are pointwise minimizers. Since \(\P_2(\cR)=1\),
Lemma~\ref{lem:pointwise-bayes_r} implies that \(\tbt\) is Bayes optimal under the loss
\(\Ham^*\). This proves the proposition.
\qed

\subsection{Proof of Lemma~\ref{lem:impossibility}}
\label{ap:gaussian}

By \eqref{eq:posterior_def_generic}, for any measurable
\(t\colon\R^2\to\{1,2\}\),
\[
 \P\left(Z\neq t(\bx)\given \bX=\bx\right)
 \weq
 1-\frac{f_{t(\bx)}(\bx)}{f_1(\bx)+f_2(\bx)}.
\]
Specializing to the single-coordinate MAP estimator \(\tilde t\) from
\eqref{eq:bayes_rule_generic}, we obtain
\[
 \P\left(Z\neq\tilde t(\bx)\given \bX=\bx\right)
 \weq
 \frac{\min\{f_1(\bx),f_2(\bx)\}}{f_1(\bx)+f_2(\bx)},
\]
and hence, with \(A_*\coloneqq\{x:f_1(\bx)>f_2(\bx)\}\),
\begin{equation}\label{eq:perr_again}
 \perr
 \;\coloneqq\;
 \P\left(Z_i\neq\tilde t(\bX_i)\right)
 \weq
 \frac12\int_{\R^2}\min\{f_1,f_2\}\,d\lambda
 \weq
 \tfrac12 F_2(A_*)+\tfrac12 F_1(A_*^c),
\end{equation}
where the unconditional law of \(\bX_i\) admits the Lebesgue density
\((f_1+f_2)/2\), and \(\lambda\) is Lebesgue measure on \(\R^2\).

Assume now that the latent coordinate on each line segment has a common law \(\nu\) on
\([-\ell/2,\ell/2]\), that \(\nu\) is symmetric about \(0\), and that
\[
 \nu([-u,u]) \wge B \left(\frac{u}{\ell}\right)^\rho,
 \qquad 0\le u\le u_0,
\]
for some \(\rho\in(0,1]\), \(B>0\), and \(u_0>0\).
Let \(U\eqd \nu\). If \(\nu=\delta_0\), then \(f_1=f_2\), so \(\perr=1/2\), and the desired lower
bound is immediate. Thus we may assume \(\nu\neq \delta_0\).

Without loss of generality, we may choose coordinates so that the line segments are aligned as in
Figure~\ref{fig:quadrants}:
\[
 L_1=\{(u\cos\beta,-u\sin\beta):u\in[-\ell/2,\ell/2]\},
 \quad
 L_2=\{(u\cos\beta,+u\sin\beta):u\in[-\ell/2,\ell/2]\},
\]
where \(\beta=\alpha/2\in(0,\pi/2)\). Let
\[
 \varphi_\sigma(x)\coloneqq (2\pi\sigma^2)^{-1/2}e^{-x^2/(2\sigma^2)}.
\]
\begin{figure}[ht]
\centering
\begin{tikzpicture}[scale=1.5]
 \def\beta{30}
 
 \fill[pattern=north west lines, pattern color=myred, fill opacity=0.3] (0,0) rectangle (1,1);
 
 \fill[pattern=north west lines, pattern color=myred, fill opacity=0.3] (-1,-1) rectangle (0,0);
 
 \draw[->] (-1,0) -- (1,0) node[right] {$x_1$};
 \draw[->] (0,-1) -- (0,1) node[above] {$x_2$};
 
 \draw[thick, color=myblue] ({-cos(\beta)}, {sin(\beta)}) -- ({cos(\beta)}, {-sin(\beta)})
 node[pos=0.8, below left] {$L_1$};

 \draw[thick, color=myred] ({-cos(\beta)}, {-sin(\beta)}) -- ({cos(\beta)}, {sin(\beta)})
 node[pos=0.8, above left] {$L_2$};

\def\radius{0.3} 
\draw[gray, thick] (\beta:\radius) arc[start angle=\beta, end angle=-\beta, radius=\radius];
\node at ({\radius*1.4},0.0) {$\alpha$};
\end{tikzpicture}
 \caption{The decision region $A_*^c$ where the density $f_2$ dominates $f_1$ is colored in red.
 }
\label{fig:quadrants}
\end{figure}

Then the conditional laws \(F_1\) and \(F_2\) of \(\bX_i\) given \(Z_i=1\) and \(Z_i=2\) have
Lebesgue densities
\[
 f_1(x_1,x_2)
 =
 \int_{-\ell/2}^{\ell/2}
 \varphi_\sigma(x_1-u\cos\beta)\,
 \varphi_\sigma(x_2+u\sin\beta)\,\nu(du),
\]
\[
 f_2(x_1,x_2)
 =
 \int_{-\ell/2}^{\ell/2}
 \varphi_\sigma(x_1-u\cos\beta)\,
 \varphi_\sigma(x_2-u\sin\beta)\,\nu(du).
\]
Expanding the squares gives
\[
 f_1(x_1,x_2)
 =
 c_\sigma(x_1,x_2)
 \int_{-\ell/2}^{\ell/2}
 \exp\!\left(
 -\frac{u^2}{2\sigma^2}
 + \frac{u(x_1\cos\beta-x_2\sin\beta)}{\sigma^2}
 \right)\nu(du),
\]
\[
 f_2(x_1,x_2)
 =
 c_\sigma(x_1,x_2)
 \int_{-\ell/2}^{\ell/2}
 \exp\!\left(
 -\frac{u^2}{2\sigma^2}
 + \frac{u(x_1\cos\beta+x_2\sin\beta)}{\sigma^2}
 \right)\nu(du),
\]
where
\[
 c_\sigma(x_1,x_2)\coloneqq
 \frac{1}{2\pi\sigma^2}\exp\!\left(-\frac{x_1^2+x_2^2}{2\sigma^2}\right).
\]
Since \(\nu\) is symmetric, we may rewrite these integrals in terms of the function
\[
 G_\sigma(t)
 \weq \int_{-\ell/2}^{\ell/2}
 \exp\left(-\frac{u^2}{2\sigma^2}\right)
 \cosh\left(\frac{ut}{\sigma^2}\right)\nu(du),
 \qquad t\in\R.
\]
Indeed, for every \(t\in\R\),
\[
\int_{-\ell/2}^{\ell/2}
\exp\left(-\frac{u^2}{2\sigma^2}\right)
\exp\left(\frac{ut}{\sigma^2}\right)\nu(du)
=
\int_{-\ell/2}^{\ell/2}
\exp\left(-\frac{u^2}{2\sigma^2}\right)
\exp\left(-\frac{ut}{\sigma^2}\right)\nu(du),
\]
because \(\nu(B)=\nu(-B)\) for every Borel set \(B\). Averaging the two sides and using
\(\cosh v=\tfrac12(e^v+e^{-v})\), we obtain
\[
\int_{-\ell/2}^{\ell/2}
\exp\left(-\frac{u^2}{2\sigma^2}\right)
\exp\left(\frac{ut}{\sigma^2}\right)\nu(du)
=
\int_{-\ell/2}^{\ell/2}
\exp\left(-\frac{u^2}{2\sigma^2}\right)
\cosh\left(\frac{ut}{\sigma^2}\right)\nu(du)
=
G_\sigma(t).
\]
It follows that
\begin{align*}
 f_1(x_1,x_2)
 &= c_\sigma(x_1,x_2)\,G_\sigma(x_1\cos\beta-x_2\sin\beta),\\
 f_2(x_1,x_2)
 &= c_\sigma(x_1,x_2)\,G_\sigma(x_1\cos\beta+x_2\sin\beta).
\end{align*}

We next show that \(G_\sigma\) is even and strictly increasing on \([0,\infty)\).
Evenness follows from the evenness of \(\cosh\): for every \(t\in\R\),
\[
\begin{aligned}
G_\sigma(-t)
&\weq \int_{-\ell/2}^{\ell/2}
\exp\left(-\frac{u^2}{2\sigma^2}\right)
\cosh\left(-\frac{ut}{\sigma^2}\right)\nu(du) \\
&\weq \int_{-\ell/2}^{\ell/2}
\exp\left(-\frac{u^2}{2\sigma^2}\right)
\cosh\left(\frac{ut}{\sigma^2}\right)\nu(du) \\
&\weq G_\sigma(t).
\end{aligned}
\]
To prove strict monotonicity, fix \(0\le s<t\). Then
\[
\begin{aligned}
G_\sigma(t)-G_\sigma(s)
&\weq \int_{-\ell/2}^{\ell/2}
\exp\left(-\frac{u^2}{2\sigma^2}\right)
\left[
\cosh\left(\frac{ut}{\sigma^2}\right)
-
\cosh\left(\frac{us}{\sigma^2}\right)
\right]\nu(du) \\
&\weq \int_{-\ell/2}^{\ell/2}
\exp\left(-\frac{u^2}{2\sigma^2}\right)
\left[
\cosh\left(\frac{|u|t}{\sigma^2}\right)
-
\cosh\left(\frac{|u|s}{\sigma^2}\right)
\right]\nu(du),
\end{aligned}
\]
where we used again that \(\cosh\) is even. Since \(\cosh\) is strictly increasing on
\([0,\infty)\), the integrand is nonnegative for every \(u\in\R\), and it is strictly positive
whenever \(u\neq 0\). Since \(\nu\neq\delta_0\), we have \(\nu(\{u\neq 0\})>0\). Therefore
\[
G_\sigma(t)-G_\sigma(s)>0,
\]
so \(G_\sigma\) is strictly increasing on \([0,\infty)\).

Therefore $f_1(x_1,x_2)\le f_2(x_1,x_2)$ is equivalent to
\[
|x_1\cos\beta-x_2\sin\beta|
\le
|x_1\cos\beta+x_2\sin\beta|.
\]
Squaring both sides gives
\[
 |x_1\cos\beta+x_2\sin\beta|^2
 -
 |x_1\cos\beta-x_2\sin\beta|^2
 =
 4x_1x_2\sin\beta\cos\beta.
\]
Since \(\sin\beta\cos\beta>0\), it follows that
\[
 A_*^c
 =
 \{x\in\R^2:f_1(x)\le f_2(x)\}
 =
 \{(x_1,x_2)\in\R^2:x_1x_2\ge 0\},
\]
that is, \(A_*^c\) is the union of quadrants I and III up to the boundary $\{x_1x_2=0\}$, exactly as in
Figure~\ref{fig:quadrants}.

Now let \(T(x_1,x_2)\coloneqq (x_1,-x_2)\). Under \(F_1\),
\[
 \bX_i \weq (U\cos\beta+\sigma Y_1,\,-U\sin\beta+\sigma Y_2),
\]
so
\[
 T(\bX_i)\weq (U\cos\beta+\sigma Y_1,\ U\sin\beta-\sigma Y_2).
\]
Since \(-Y_2\stackrel{d}{=}Y_2\), the law of \(T(\bX_i)\) is exactly \(F_2\). Moreover,
\(T\) maps \(A_*^c=\{x_1x_2\ge 0\}\) to \(A_*=\{x_1x_2<0\}\) up to the boundary
\(\{x_1x_2=0\}\), which has zero \(F_1\)- and \(F_2\)-measure. Hence
\(F_2(A_*)=F_1(A_*^c)\), and \eqref{eq:perr_again} becomes
\[
 \perr = F_1(A_*^c).
\]

Under \(F_1\), the random vector \(\bX_i\) is centrally symmetric, because
\[
 \left(U\cos\beta+\sigma Y_1,\,-U\sin\beta+\sigma Y_2\right)
 \stackrel{d}{=}
 -\left(U\cos\beta+\sigma Y_1,\,-U\sin\beta+\sigma Y_2\right)
\]
by symmetry of \(U,Y_1,Y_2\). Therefore quadrants I and III have equal \(F_1\)-mass, and since
\(A_*^c\) is their union,
\begin{align}
 \perr
 &= 2\,\P\!\left(U\cos\beta+\sigma Y_1\ge 0,\,-U\sin\beta+\sigma Y_2\ge 0\right)
 \nonumber\\
 &= 2\,\P\!\left(\sigma Y_1\ge -U\cos\beta,\ \sigma Y_2\ge U\sin\beta\right).
 \label{eq:perr_common_1}
\end{align}
Conditioning on \(U\) and using the independence of \(Y_1\) and \(Y_2\), we obtain
\begin{align}
 \perr
 &=2\,\E\left[
 \P\left(\sigma Y_1\ge -U\cos\beta,\ \sigma Y_2\ge U\sin\beta \mid U\right)
 \right] \nonumber\\
 &=2\,\E\left[
 \P\left(Y_1\ge -\frac{U\cos\beta}{\sigma}\;\middle|\; U\right)
 \P\left(Y_2\ge \frac{U\sin\beta}{\sigma}\;\middle|\; U\right)
 \right] \nonumber\\
 &=2\,\E\left[
 \Phibar\left(-\frac{U\cos\beta}{\sigma}\right)
 \Phibar\left(\frac{U\sin\beta}{\sigma}\right)
 \right].
 \label{eq:ErrExact_common}
\end{align}

For the lower bound, define
\[
 \eta_\alpha \coloneqq \frac{1}{2\max\{\cos(\alpha/2),\sin(\alpha/2)\}},
 \qquad
 r_\sigma \coloneqq \eta_\alpha\sigma\wedge u_0.
\]
Since \(\beta=\alpha/2\), the event \(\{|U|\le r_\sigma\}\) implies \(|U|\le \eta_\alpha\sigma\), and therefore
\[
 \left|\frac{U\cos\beta}{\sigma}\right|
 \le
 \eta_\alpha\cos\beta
 \le \frac12,
 \qquad
 \left|\frac{U\sin\beta}{\sigma}\right|
 \le
 \eta_\alpha\sin\beta
 \le \frac12.
\]
Hence, on the event \(\{|U|\le r_\sigma\}\),
\[
 -\frac{U\cos\beta}{\sigma}\le \frac12,
 \qquad
 \frac{U\sin\beta}{\sigma}\le \frac12.
\]
Since \(\Phibar\) is decreasing, it follows that
\[
 \Phibar\left(-\frac{U\cos\beta}{\sigma}\right)\ge \Phibar(1/2),
 \qquad
 \Phibar\left(\frac{U\sin\beta}{\sigma}\right)\ge \Phibar(1/2),
\]
and thus
\[
 \Phibar\left(-\frac{U\cos\beta}{\sigma}\right)
 \Phibar\left(\frac{U\sin\beta}{\sigma}\right)
 \ge \Phibar(1/2)^2\,\1\{|U|\le r_\sigma\}.
\]
Substituting this into \eqref{eq:ErrExact_common} gives
\[
 \perr
 \wge
 2\,\E\left[\Phibar(1/2)^2\,\1\{|U|\le r_\sigma\}\right]
 =
 2\,\Phibar(1/2)^2\,\P(|U|\le r_\sigma).
\]
Since \(r_\sigma\le u_0\), the lower small-ball assumption yields
\[
 \P(|U|\le r_\sigma)
 =
 \nu([-r_\sigma,r_\sigma])
 \wge
 B\left(\frac{r_\sigma}{\ell}\right)^\rho.
\]
Therefore
\[
 \perr \wge 2\,\Phibar(1/2)^2\,B\left(\frac{r_\sigma}{\ell}\right)^\rho.
\]
Finally,
\[
 \frac{r_\sigma}{\ell}
 =
 \min\left\{\eta_\alpha\frac{\sigma}{\ell},\frac{u_0}{\ell}\right\}
 \wge
 \min\left\{\eta_\alpha,\frac{u_0}{\ell}\right\}
 \left(\frac{\sigma}{\ell}\wedge 1\right),
\]
so
\[
 \perr
 \wge
 2\,\Phibar(1/2)^2\,B
 \min\left\{\eta_\alpha^\rho,\left(\frac{u_0}{\ell}\right)^\rho\right\}
 \left(\frac{\sigma}{\ell}\wedge 1\right)^\rho.
\]
This proves the lemma.
\qed

\section{Concentration inequalities}
\label{ap:conc_ineq}

\subsection{Standard concentration inequalities}

We collect here standard tail bounds used in our analysis.

\paragraph{Rayleigh distribution.}
Let $X$ be Rayleigh distributed with scale parameter $\sigma > 0$. Then, for any $t > 0$, its CDF is
\begin{equation}
\P\left( X \leq t \right) = 1 - \exp\left( -\frac{t^2}{2\sigma^2} \right).
\label{eq:rayleigh}
\end{equation}

\paragraph{Chi-squared distribution.}
A standard Cram\'er--Chernoff technique utilizing the moment generating function
(e.g.\ \cite{Ghosh_2021})
implies that for a chi-squared random variable $X$ with $k$ degrees of freedom,
\begin{equation}
 \label{eq:ChiSquare}
 \P(X \ge k s)
 \wle \exp\left( -\frac{k}{2} \left( s - 1 - \log s \right) \right)
 \weq (es)^{k/2} e^{-ks/2}
 \qquad \text{for all $s \ge 1$}.
\end{equation}

\paragraph{Binomial distribution.}
Let $X = \sum_{i=1}^n X_i$ be the sum of independent Bernoulli random variables $X_i \in \{0,1\}$ with $\E[X_i] = p_i$, and let $\mu = \E[X] = \sum_{i=1}^n p_i$. Then, for any $0 \leq \delta \leq 1$,
\begin{equation}
\P\left( |X - \mu| \geq \delta \mu \right) \leq 2 \exp\left( -\frac{\delta^2 \mu}{3} \right).
\label{ineq:binomial}
\end{equation}
For a detailed discussion, see \cite[Section 5]{molloy2002graph}.

\subsection{Concentration of community sizes and triple statistics}
\label{ap:concentration_counts}
We begin by presenting two lemmas that establish concentration bounds for the community sizes and the counts of intra- and inter-community triples.

\begin{lemma}
 \label{lem:Ni_concentration}
 Let $Z_i \eqd \mathrm{Unif}\{1,2\}$ be i.i.d., and define
 \(
 N_1 \coloneqq \sum_{i=1}^N \1\{Z_i=1\}.
 \)
 For any sequence $N^{-1/2} \ll \delta_N \leq 1$,
 \begin{equation}
 \P\left(\left|N_1-\tfrac{N}{2}\right| \ge \delta_N N \right)
 \wle
 2 \exp\left(-\tfrac{2\delta_N^{2} N}{3}\right)
 = o(1).
 \label{eq:N1_tail}
 \end{equation}
\end{lemma}
\begin{proof}
By the Chernoff bound \eqref{ineq:binomial}, we have \eqref{eq:N1_tail}.
\end{proof}

\subsection{Subsampling of triples}
\label{ap:concentration_sub}
The results in this section concern the subsampling of triples and rely on the notation introduced in Section~\ref{sec:data-driven}. 

\begin{lemma}
\label{lem:TLS_distinct}
Fix \(N \ge 3\) and let \((\bZ,\bX)\) be sampled from \(\GLMM_N(\alpha,\nu,\sigma_N)\) so that the joint distribution of \((\bX_1,\dots,\bX_N)\) admits a density with respect to Lebesgue measure on \(\R^{2N}\). For each triple \(\tau=\{i_1,i_2,i_3\}\subset[N]\), let
\[
  s(\tau) \coloneqq \sigma_\tls(\bX_{i_1},\bX_{i_2},\bX_{i_3})
\]
denote its TLS residual. Then, with probability one, all TLS residuals corresponding to distinct triples are different, i.e.
\[
  \P\left(\exists\,\tau\neq\tau' \text{ with } s(\tau)=s(\tau')\right)=0.
\]
\end{lemma}

\begin{proof}
Fix two distinct triples \(\tau\neq\tau'\). Choose an index \(a\in\tau'\setminus\tau\), write
$
\bx\coloneqq \bX_a\in\R^2,
$
and collect all remaining coordinates into \(\by\in\R^{2N-2}\). Then \(s(\tau)\) depends only on \(\by\), whereas \(s(\tau')\) depends on \((\bx,\by)\). For a triple \(\tau=\{u_1,u_2,u_3\}\), let
\[
\bar{\bX}_{\tau}\coloneqq \frac13(\bX_{u_1}+\bX_{u_2}+\bX_{u_3}),
\qquad
\bS(\tau)\coloneqq \sum_{r=1}^3 (\bX_{u_r}-\bar{\bX}_{\tau})(\bX_{u_r}-\bar{\bX}_{\tau})^\top
\]
be its centered scatter matrix. By \cite{VANHUFFEL1993377},
$
s(\tau)^2=\lambda_{\min}(\bS(\tau)).
$
Since \(\bS(\tau)\) is \(2\times 2\) and symmetric,
\[
\lambda_{\min}(\bS(\tau))
=
\frac{\tr \bS(\tau)}{2}
-\frac12\sqrt{(\tr \bS(\tau))^2-4\det \bS(\tau)}.
\]
Hence, for fixed \(\by\), the map \(\bx\mapsto s(\tau')^2\) is real analytic away from the discriminant set
\[
E_{\by}\coloneqq \{\bx\in\R^2:D_{\tau'}(\bx,\by)=0\},
\qquad
D_{\tau'}(\bx,\by)\coloneqq (\tr \bS(\tau'))^2-4\det \bS(\tau').
\]

Now fix \(\by\) such that the two points indexed by \(\tau'\setminus\{a\}\) are distinct. Denote these two fixed points by \(\bY_1,\bY_2\), and let \(L\) be the line through them. Then \(D_{\tau'}(\cdot,\by)\) is not the zero polynomial: indeed, if \(\bx\in L\) and \(\bx\neq \bY_1,\bY_2\), then the three points of \(\tau'\) are collinear and not all identical, so \(\bS(\tau')\) has rank one. Thus its eigenvalues are \(\lambda_1>0\) and \(\lambda_2=0\), and
\[
D_{\tau'}(\bx,\by)
=
(\tr \bS(\tau'))^2-4\det \bS(\tau')
=
(\lambda_1-\lambda_2)^2
=
\lambda_1^2
>0.
\]
Therefore \(E_{\by}\) has Lebesgue measure zero in \(\R^2\); see, for example, \cite{mityagin2015zero}.

Since \(s(\tau)^2\) is constant as a function of \(\bx\), define on \(\R^2\setminus E_{\by}\)
\[
h_{\by}(\bx)\coloneqq s(\tau')^2-s(\tau)^2.
\]
This is real analytic, and it is nonconstant: if \(\bx\in L\setminus E_{\by}\), then \(s(\tau')=0\), whereas if \(\bx\notin L\), then the three points of \(\tau'\) are not collinear, so \(s(\tau')>0\). It follows that \(h_{\by}\) is nonconstant.

Since \(h_{\by}\) is a nonconstant real-analytic function on the open set \(\R^2\setminus E_{\by}\), its zero set there has Lebesgue measure zero. Adding back the null set \(E_{\by}\), we conclude that
\[
\{\bx\in\R^2:s(\tau')^2=s(\tau)^2\}
\]
has Lebesgue measure zero for every \(\by\) such that the two points indexed by \(\tau'\setminus\{a\}\) are distinct. The set of \(\by\) for which those two points coincide has Lebesgue measure zero in \(\R^{2N-2}\). Hence, by Fubini's theorem, the set
\[
\{(\bX_1,\dots,\bX_N)\in\R^{2N}: s(\tau')=s(\tau)\}
\]
has Lebesgue measure zero in \(\R^{2N}\). Since \((\bX_1,\dots,\bX_N)\) admits a density with respect to Lebesgue measure, it follows that
\(
\P\left(s(\tau)=s(\tau')\right)=0.
\)
There are only finitely many pairs of distinct triples, so a union bound concludes the proof.
\end{proof}
The following lemma ensures that the triples $\tau_i$ are disjoint with high probability.
\begin{lemma}
\label{lem:disjointness}
 Let $M \ll N^{1/2}$. Suppose we sample $M$ triples $\cT = (\tau_1,\dots,\tau_M)$ independently and uniformly at random from the set $\binom{[N]}{3}$. Then with high probability, all triples in $\cT$ are disjoint.
 \end{lemma}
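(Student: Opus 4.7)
The plan is a standard union-bound argument: the claim that all $M$ sampled triples are pairwise disjoint fails only if some pair intersects, so it suffices to control the expected number of intersecting pairs and invoke Markov/union.

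First I would fix a pair of indices $i<j$ and compute (or bound) $\P(\tau_i\cap\tau_j\neq\emptyset)$. Conditioning on $\tau_i$, the triple $\tau_j$ is uniform on $\binom{[N]}{3}$, and a simple union bound over the three vertices of $\tau_j$ gives
$$
\P(\tau_i\cap\tau_j\neq\emptyset)\;\le\; 3\cdot\frac{3}{N}\;=\;\frac{9}{N}.
$$
(Equivalently, $1-\binom{N-3}{3}/\binom{N}{3}=O(1/N)$; the precise constant is immaterial.)

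Next I would apply a union bound over the $\binom{M}{2}$ unordered pairs of sampled triples. Letting $\mathcal{B}$ denote the event that some pair intersects,
$$
\P(\mathcal{B})\;\le\;\binom{M}{2}\cdot\frac{9}{N}\;\le\;\frac{9M^{2}}{2N}.
$$
Under the assumption $M\ll N^{1/2}$ the right-hand side is $o(1)$, so $\P(\mathcal{B}^{c})=1-o(1)$, which is precisely the statement that all $\tau_1,\dots,\tau_M$ are disjoint with high probability.

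There is no real obstacle here; the only thing to be mindful of is that sampling is with replacement, in which case even the event $\tau_i=\tau_j$ falls under $\{\tau_i\cap\tau_j\neq\emptyset\}$ and is therefore already accounted for by the bound above. The estimate is tight up to the constant $9$, which explains the threshold $M\ll\sqrt{N}$: at $M\asymp\sqrt{N}$ one would expect a Poisson-like number of colliding pairs, so the condition in the lemma is essentially optimal for a union-bound approach.
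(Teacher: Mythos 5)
Your proposal is correct and follows essentially the same route as the paper: a union bound over the $\binom{M}{2}$ pairs, combined with the observation that a single pair of independently sampled triples collides with probability $O(1/N)$. The only cosmetic difference is that you bound the pairwise collision probability by a further union bound over vertices (giving $9/N$), whereas the paper computes it exactly as $1-\binom{N-3}{3}/\binom{N}{3}$; both yield $O(1/N)$ and the argument is otherwise identical.
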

 
 \begin{proof}
 For $1 \le i < j \le M$, define the event
 $
 E_{ij} \coloneqq \left\{\tau_i \cap \tau_j \neq \emptyset\right\},
 $
 which indicates that the $i$th and $j$th triples overlap in at least one element. We wish to bound the probability that there exists at least one pair of sampled triples that intersect.
 By the union bound,
 \begin{equation}
 \P\left(\bigcup_{1 \le i < j \le M} E_{ij}\right) \\\
 \le \sum_{1 \le i < j \le M} \P(E_{ij})
 = \binom{M}{2} \P(E_{12}),
 \label{ineq:disjoint_triples}
 \end{equation}
 where $E_{12}$ is a generic event that the first two sampled triples overlap. A straightforward counting argument shows that
 $$
 \P(E_{12}) = \frac{|\{(\tau_1,\tau_2) : \tau_1,\tau_2 \in \binom{[N]}{3},\; \tau_1\cap \tau_2 \neq \emptyset\}|}{\binom{N}{3}^2}
 =1 - \frac{\binom{N-3}{3}}{\binom{N}{3}} = O\left(\frac{1}{N}\right),
 $$
 since $\tau_2$ must share at least one of the three elements of $\tau_1$.
 Letting $M \ll N^{1/2}$ ensures $M^2 / N = o(1)$, and the right-hand side of \eqref{ineq:disjoint_triples} vanishes.
 \end{proof}

The following result shows that the empirical distribution of the sampled TLS residuals
concentrates uniformly around its mean within any deterministic margin \(\delta_M\) satisfying
\(M^{-1/2}\ll \delta_M\le 1\). The proof uses the Dvoretzky--Kiefer--Wolfowitz inequality, which we record first.

\begin{lemma}[Dvoretzky--Kiefer--Wolfowitz inequality {(see, e.g., \cite[Theorem 11.5]{kosorok2008introduction})}]\label{lem:DKW}
Let \(X_1,\dots,X_n\) be i.i.d.\ real-valued random variables with distribution function \(F\), and let
\[
F_n(t)\coloneqq \frac1n\sum_{i=1}^n \1\{X_i\le t\},
\qquad t\in\R,
\]
be the empirical distribution function. Then, for every \(x>0\),
\[
\P\left(\sup_{t\in\R}\sqrt{n}\,|F_n(t)-F(t)|>x\right)
\le 2e^{-2x^2}.
\]
\end{lemma}

\begin{lemma}
\label{lem:find_concentration}
Let \(1 \ll M \ll N^{1/2}\) and let \(M^{-1/2} \ll \delta_M \le 1\). Then
\[
\sup_{t>0}\left|F_M(t)-\E[F_M(t)]\right|\le \delta_M
\]
with high probability, where
\(
\E[F_M(t)] = \tfrac14\,p(t) + \tfrac34\,q(t).
\)
\end{lemma}

\begin{proof}
Draw \(M\) triples \(\tau_1,\dots,\tau_M\) independently and uniformly from \(\binom{[N]}{3}\), and letting $S_i \coloneqq \sigma_\tls(\tau_i),$ define
\[
A_i(t)\coloneqq \1\{S_i\le t\},
\qquad
T_i\coloneqq \1\{\tau_i \text{ is within one community}\},
\qquad
\pi_S\coloneqq \frac1M\sum_{i=1}^M T_i.
\]
Then
\(
F_M(t)=\frac1M\sum_{i=1}^M A_i(t).
\)

Let \(E_2\coloneqq\{\tau_1,\dots,\tau_M \text{ are pairwise disjoint}\}\). By Lemma~\ref{lem:disjointness},
\(
\P(E_2^c)=o(1).
\)
Conditional on \((\tau_1,\dots,\tau_M)\) and on \(E_2\), the variables \(T_1,\dots,T_M\) are i.i.d.\
Bernoulli\((1/4)\), since the triples are disjoint and the labels are i.i.d.\ \(\mathrm{Unif}\{1,2\}\). Hence Hoeffding's inequality gives
\[
\P\left(\left|\pi_S-\frac14\right|>\frac{\delta_M}{4}\,\middle|\,\tau_1,\dots,\tau_M,E_2\right)
\wle
2\exp\left(-\frac{M\delta_M^2}{8}\right)
=o(1),
\]
because \(M\delta_M^2\to\infty\). Thus the event
\[
E_1\coloneqq \left\{\left|\pi_S-\frac14\right|\le \frac{\delta_M}{4}\right\}
\]
holds with high probability on \(E_2\).

Set
\[
\cT_{\mathrm{in}}\coloneqq \{i:T_i=1\},
\qquad
\cT_{\mathrm{out}}\coloneqq \{i:T_i=0\}.
\]
On \(E_1\), we have \(|\cT_{\mathrm{in}}|=(\frac14+O(\delta_M))M\) and
\(|\cT_{\mathrm{out}}|=(\frac34+O(\delta_M))M\), so in particular
\(
|\cT_{\mathrm{in}}|,\,|\cT_{\mathrm{out}}|\gtrsim M
\)
for all sufficiently large \(N\). Define the empirical distribution functions
\[
F_{\mathrm{in}}(t)\coloneqq \frac{1}{|\cT_{\mathrm{in}}|}\sum_{i\in\cT_{\mathrm{in}}} A_i(t),
\qquad
F_{\mathrm{out}}(t)\coloneqq \frac{1}{|\cT_{\mathrm{out}}|}\sum_{i\in\cT_{\mathrm{out}}} A_i(t).
\]
Then
\(
F_M(t)=\pi_S F_{\mathrm{in}}(t)+(1-\pi_S)F_{\mathrm{out}}(t).
\)

Let \(\mathcal G_N\coloneqq \sigma(\tau_1,\dots,\tau_M,T_1,\dots,T_M)\). On \(E_2\), conditional on
\(\mathcal G_N\), the residuals \(\{S_i\}_{i\in\cT_{\mathrm{in}}}\) are i.i.d.\ with distribution function
\(p(t)\), and \(\{S_i\}_{i\in\cT_{\mathrm{out}}}\) are i.i.d.\ with distribution function \(q(t)\).
Hence, conditional on \(\mathcal G_N\), \(E_1\), and \(E_2\), Lemma~\ref{lem:DKW} yields
\[
\P\left(\sup_{t>0}|F_{\mathrm{in}}(t)-p(t)|>\frac{\delta_M}{4}\,\middle|\,\mathcal G_N,E_1,E_2\right)=o(1),
\]
and similarly
\[
\P\left(\sup_{t>0}|F_{\mathrm{out}}(t)-q(t)|>\frac{\delta_M}{4}\,\middle|\,\mathcal G_N,E_1,E_2\right)=o(1),
\]
since \(|\cT_{\mathrm{in}}|,|\cT_{\mathrm{out}}|\gtrsim M\) on \(E_1\) and \(M\delta_M^2\to\infty\).

By the law of total expectation,
\(
\E[F_M(t)]=\tfrac14\,p(t)+\tfrac34\,q(t).
\)
Moreover,
\[
F_M(t)-\E[F_M(t)]
=
\pi_S\bigl(F_{\mathrm{in}}(t)-p(t)\bigr)
+
(1-\pi_S)\bigl(F_{\mathrm{out}}(t)-q(t)\bigr)
+
\left(\pi_S-\frac14\right)\bigl(p(t)-q(t)\bigr).
\]
Since \(|p(t)-q(t)|\le 1\), it follows that on the event
\[
E_1\cap E_2\cap
\left\{\sup_{t>0}|F_{\mathrm{in}}(t)-p(t)|\le \frac{\delta_M}{4}\right\}
\cap
\left\{\sup_{t>0}|F_{\mathrm{out}}(t)-q(t)|\le \frac{\delta_M}{4}\right\},
\]
we have, uniformly in \(t>0\),
\[
|F_M(t)-\E[F_M(t)]|
\le
\pi_S\frac{\delta_M}{4}
+
(1-\pi_S)\frac{\delta_M}{4}
+
\left|\pi_S-\frac14\right|
\le \frac{\delta_M}{2}
\le \delta_M.
\]
Therefore, by the union bound,
\begin{align*}
\P\left(\sup_{t>0}|F_M(t)-\E[F_M(t)]|>\delta_M\right)
&\wle \P(E_2^c)+\P(E_1^c,E_2) \\
&\quad + \P\left(\sup_{t>0}|F_{\mathrm{in}}(t)-p(t)|>\frac{\delta_M}{4},\,E_1,E_2\right) \\
&\quad + \P\left(\sup_{t>0}|F_{\mathrm{out}}(t)-q(t)|>\frac{\delta_M}{4},\,E_1,E_2\right),
\end{align*}
and each term on the right-hand side is \(o(1)\). This proves the claim.
\end{proof}

The next lemma provides a condition on the threshold sequence that ensures the fraction of observed triples concentrates around the expected number of intra-community triples, $1/4$, and demonstrates that such a vanishing sequence exists. 
\begin{lemma}
\label{lem:expectation_gap}
Assume \(\sigma_N=o(1)\) and \(1\ll M\ll N^{1/2}\). Then:
\begin{enumerate}[label=(\roman*)]
\item There exist deterministic sequences \(s_N,\eta_N>0\) with \(s_N=o(1)\) and \(\eta_N=o(1)\) such that
\(
\left|F_M(s_N)-\tfrac14\right|\le \eta_N
\)
with high probability.

\item For every fixed \(\varepsilon>0\), there exists a constant \(c_\varepsilon>0\) such that, with high probability,
\[
\inf_{t\ge \varepsilon}\left|F_M(t)-\tfrac14\right|\ge c_\varepsilon.
\]
\end{enumerate}
\end{lemma}

\begin{proof}
Fix a deterministic sequence \(\delta_M=\delta_{M_N}\) such that
\[
M^{-1/2}\ll \delta_M\ll 1,
\]
for example \(\delta_M=M^{-1/3}\). Since \(M=M_N\to\infty\), we have \(\delta_M=o(1)\).

We first prove (i). Let \(s_N\coloneqq \sqrt{\sigma_N\ell}\). Then \(s_N=o(1)\) and
\[
\frac{\sigma_N}{s_N}=\sqrt{\frac{\sigma_N}{\ell}}=o(1),
\qquad
\frac{s_N}{\ell}=\sqrt{\frac{\sigma_N}{\ell}}=o(1),
\]
so \(\sigma_N\ll s_N\ll \ell\). In particular, \((s_N+\sigma_N)/\ell=o(1)\), hence \((s_N+\sigma_N)/\ell\le 1/(2e)\) for all sufficiently large \(N\). Therefore Lemma~\ref{lem:_within_probability} and Corollary~\ref{cor:coordinate_smallball_common} imply \(1-p(s_N)=o(1)\) and \(q(s_N)=o(1)\). Thus
\[
\left|\E[F_M(s_N)]-\tfrac14\right|
=
\left|\tfrac34\,q(s_N)-\tfrac14\bigl(1-p(s_N)\bigr)\right|
\le \tfrac34\,q(s_N)+\tfrac14\bigl(1-p(s_N)\bigr)
=o(1).
\]
Define \(\eta_N \coloneqq \delta_M+\left|\E[F_M(s_N)]-\tfrac14\right|\). Then \(\eta_N=o(1)\). By Lemma~\ref{lem:find_concentration},
\[
\sup_{t>0}|F_M(t)-\E[F_M(t)]|\le \delta_M
\]
with high probability. In particular,
\[
\left|F_M(s_N)-\tfrac14\right|
\le
|F_M(s_N)-\E[F_M(s_N)]|
+
\left|\E[F_M(s_N)]-\tfrac14\right|
\le \eta_N
\]
with high probability. This proves (i).

We next prove (ii). Fix \(\varepsilon>0\). Since \(q(t)\) is nondecreasing in \(t\), Lemma~\ref{lem:q_lower_bound} gives \(q(t)\ge q(\varepsilon)\) for all \(t\ge \varepsilon\). Since \(\sigma_N=o(1)\), there exists a constant \(c_\varepsilon>0\) such that \(q(t)\ge c_\varepsilon\) for all \(t\ge \varepsilon\) and all sufficiently large \(N\). Similarly, \(p(t)\) is nondecreasing in \(t\), so \(1-p(t)\le 1-p(\varepsilon)\) for all \(t\ge \varepsilon\). Since \(\varepsilon/\sigma_N\to\infty\), Lemma~\ref{lem:_within_probability} yields \(1-p(\varepsilon)=o(1)\). Hence, after decreasing \(c_\varepsilon\) if necessary, we have \(1-p(t)\le c_\varepsilon\) for all \(t\ge \varepsilon\) and all sufficiently large \(N\). Therefore, for all \(t\ge \varepsilon\),
\[
\left|\E[F_M(t)]-\tfrac14\right|
=
\left|\tfrac34\,q(t)-\tfrac14\bigl(1-p(t)\bigr)\right|
\ge \tfrac34\,c_\varepsilon-\tfrac14\,c_\varepsilon
=
\frac{c_\varepsilon}{2}
\]
for all sufficiently large \(N\).

Since \(\delta_M=o(1)\), we also have \(\delta_M\le c_\varepsilon/4\) for all sufficiently large \(N\). On the high-probability event from Lemma~\ref{lem:find_concentration}, \(\sup_{t>0}|F_M(t)-\E[F_M(t)]|\le \delta_M\), and hence, uniformly for all \(t\ge \varepsilon\),
\[
\left|F_M(t)-\tfrac14\right|
\ge
\left|\E[F_M(t)]-\tfrac14\right|
-
|F_M(t)-\E[F_M(t)]|
\ge \frac{c_\varepsilon}{2}-\delta_M
\ge \frac{c_\varepsilon}{4}.
\]
After replacing \(c_\varepsilon\) by \(c_\varepsilon/4\), this proves (ii).
\end{proof}

Finally, the following lemma gives a deterministic bound on the hyperedge probabilities when the difference 
$
\E[F_M(t_N)]-\tfrac14
$
is bounded.
\begin{lemma}
\label{lem:selection_rule}
Define $G_N(t)\coloneqq \tfrac14\,p(t)+\tfrac34\,q(t).
$ Let \(\sigma_N,t_N,\varepsilon_N=o(1)\) such that
\begin{equation}
\left|G_N(t_N) - \tfrac{1}{4}\right| \lesssim \varepsilon_N,
\label{eq:lin_comb}
\end{equation}
and
\begin{equation}
\sigma_N \ll \ell\left(\frac{\varepsilon_N}{\log(1/\varepsilon_N)}\right)^{1/\rho}\log^{-1/2}(1/\varepsilon_N).
\label{eq:sigma_ass_eps}
\end{equation}
Then
\[
q(t_N) \lesssim \varepsilon_N
\qquad\text{and}\qquad
1-p(t_N)\lesssim \varepsilon_N.
\]
\end{lemma}

\begin{proof}
Suppose, to the contrary, that the conclusion fails. Then at least one of the two bounds
\[
q(t_N)\lesssim \varepsilon_N,
\qquad
1-p(t_N)\lesssim \varepsilon_N
\]
must fail. We claim that under \eqref{eq:lin_comb}, each of these bounds implies the other.
Indeed, if \(q(t_N)=O(\varepsilon_N)\), then
\[
\frac14\left(1-p(t_N)\right)
=
\frac34\,q(t_N)+O(\varepsilon_N)
=
O(\varepsilon_N),
\]
so \(1-p(t_N)=O(\varepsilon_N)\). Conversely, if \(1-p(t_N)=O(\varepsilon_N)\), then
\[
\frac34\,q(t_N)
=
\frac14\left(1-p(t_N)\right)+O(\varepsilon_N)
=
O(\varepsilon_N),
\]
so \(q(t_N)=O(\varepsilon_N)\).
Hence, we may suppose
\begin{equation}
q(t_N)\gg \varepsilon_N
\qquad\text{and}\qquad
1-p(t_N)\gg \varepsilon_N.
\label{eq:contrary}
\end{equation}

Let
\(
s_N \coloneqq t_N+\sigma_N.
\)
Since \(t_N,\sigma_N=o(1)\), we have \(s_N=o(1)\). In particular, because \(\ell>0\) is fixed,
\(
\frac{s_N}{\ell}\le \frac{1}{2e}
\)
for all sufficiently large \(N\). Hence Corollary~\ref{cor:coordinate_smallball_common} applies and gives
\[
q(t_N)\lesssim \left(\frac{s_N}{\ell}\right)^\rho \log\left(\frac{\ell}{s_N}\right).
\]
Combining this with \eqref{eq:contrary} yields
\begin{equation}
\left(\frac{s_N}{\ell}\right)^\rho \log\left(\frac{\ell}{s_N}\right)\gg \varepsilon_N.
\label{eq:sn_pre_lower}
\end{equation}

Define
\(
g(u)\coloneqq u^\rho \log(1/u),
\)
for $0<u<e^{-1/\rho}$.
Since
\(
g'(u)=u^{\rho-1}\left(\rho\log(1/u)-1\right),
\)
the function \(g\) is strictly increasing on \((0,e^{-1/\rho})\). Also, if we set
\[
u_N\coloneqq \left(\frac{\varepsilon_N}{\log(1/\varepsilon_N)}\right)^{1/\rho},
\]
then \(u_N\in(0,e^{-1/\rho})\) for all sufficiently large \(N\). Moreover,
\[
\log\left(\frac{1}{u_N}\right)
=
\frac{1}{\rho}\log\left(\frac{\log(1/\varepsilon_N)}{\varepsilon_N}\right)
=
\frac{1}{\rho}\Bigl(\log\log(1/\varepsilon_N)+\log(1/\varepsilon_N)\Bigr).
\]
Therefore
\begin{align*}
g(u_N)
&=
u_N^\rho \log\left(\frac{1}{u_N}\right)\\
&=
\frac{\varepsilon_N}{\log(1/\varepsilon_N)}
\cdot
\frac{1}{\rho}\Bigl(\log\log(1/\varepsilon_N)+\log(1/\varepsilon_N)\Bigr)\\
&=
\frac{\varepsilon_N}{\rho}
\left(
1+\frac{\log\log(1/\varepsilon_N)}{\log(1/\varepsilon_N)}
\right)
\asymp \varepsilon_N.
\end{align*}
Since \eqref{eq:sn_pre_lower} states that \(g(s_N/\ell)\gg \varepsilon_N\), we have \(g(s_N/\ell)\gg \varepsilon_N \asymp g(u_N)\) with $s_N, \varepsilon_N \ll 1$, so monotonicity of \(g\) on \((0,e^{-1/\rho})\) implies that
\begin{equation}
s_N \gtrsim \ell u_N = \ell\left(\frac{\varepsilon_N}{\log(1/\varepsilon_N)}\right)^{1/\rho}.
\label{eq:sN_bound}
\end{equation}

By the assumed \(\sigma_N\)-bound \eqref{eq:sigma_ass_eps}, \eqref{eq:sN_bound} yields
\[
\frac{s_N}{\sigma_N}
\gg
\log^{1/2}(1/\varepsilon_N)\to\infty.
\]
Hence \(s_N\gg \sigma_N\), and together with \eqref{eq:sN_bound},
\begin{equation}
t_N=s_N-\sigma_N \asymp s_N \gtrsim \ell\left(\frac{\varepsilon_N}{\log(1/\varepsilon_N)}\right)^{1/\rho}.
\label{eq:tn_sn_equiv}
\end{equation}
In particular, \(t_N>\sqrt{3}\,\sigma_N\) for all sufficiently large \(N\), so Lemma~\ref{lem:_within_probability} applies.

Now \eqref{eq:contrary} and Lemma~\ref{lem:_within_probability} give
\[
\exp\left(
-\frac{3}{2}\left(\frac{t_N^2}{3\sigma_N^2}-1-\log\frac{t_N^2}{3\sigma_N^2}\right)
\right)
\wge 1-p(t_N)\gg \varepsilon_N.
\]
Define
\[
x_N\coloneqq \frac{t_N^2}{3\sigma_N^2},
\qquad
f(x)\coloneqq x-1-\log x.
\]
Then
\[
\exp\left(-\frac32 f(x_N)\right)\gg \varepsilon_N,
\]
and
\(
f(x_N)\lesssim \log(1/\varepsilon_N).
\)
Since \(\log x\le x/2\), \(x>0\), we have
\(
f(x)=x-1-\log x \ge (x/2)-1.
\)
Therefore
\[
x_N \lesssim 2\log(1/\varepsilon_N)+2,
\]
and since $\varepsilon_N \ll 1$, we have $\log(1/\varepsilon_N) \to \infty$, and the definition of $x_N$ gives
\begin{equation}
t_N^2 \lesssim \sigma_N^2 \log(1/\varepsilon_N).
\label{eq:tn_ub}
\end{equation}

Combining \eqref{eq:tn_sn_equiv} and \eqref{eq:tn_ub}, we obtain
\[
\ell\left(\frac{\varepsilon_N}{\log(1/\varepsilon_N)}\right)^{1/\rho}
\lesssim t_N
\lesssim \sigma_N \log^{1/2}(1/\varepsilon_N),
\]
which forces
\[
\sigma_N \gtrsim \ell\left(\frac{\varepsilon_N}{\log(1/\varepsilon_N)}\right)^{1/\rho}\log^{-1/2}(1/\varepsilon_N).
\]
This contradicts the assumption \eqref{eq:sigma_ass_eps}. Therefore \eqref{eq:contrary} is impossible, and
\[
q(t_N)\lesssim \varepsilon_N
\qquad\text{and}\qquad
1-p(t_N)\lesssim \varepsilon_N.
\qedhere
\]
\end{proof}

\section{Geometric analysis}
\label{ap:geom_tools}

In this appendix, we state and prove a geometric result used in the proof of
Lemma~\ref{lem:prob_between}. Consider non-overlapping circles \(C_1\) and \(C_2\) with centers on
the \(x\)-axis, and a third circle \(C_3\) whose center lies on the line
\[
\{(y\cot\alpha,y):y\in\R\},
\]
where \(\alpha\in(0,\pi)\) is fixed. The following result gives an upper bound on the vertical
coordinate of the center of \(C_3\), assuming that a single line intersects all three circles (Figure~\ref{fig:tangents}).

\begin{figure}[htbp!]
\centering
\begin{tikzpicture}[scale=3.5]

\draw[->, white] (-1.5, 0) -- (.6, 0);
\draw[->, white] (0, -.4) -- (0, .4);

\def\rone{0.2}
\def\rtwo{0.1}
\def\rthree{0.185}
\def\mone{-1.106}
\def\mtwo{-0.272}
\def\k{0.3855}  
\def\h{0.1208}  
\def\p{-0.55}   
\def\q{0.562}   
\def\b{0.4103}  

\draw[thick, black] (0, 0.4103-0.05) circle[radius=\rthree];
\node at (0, 0.4103-0.05) {\footnotesize $C_3$};

\draw[thick, black] (\mone, 0) circle[radius=\rone];
\node at (\mone, 0) {\footnotesize $C_1$};

\draw[thick, black] (\mtwo, 0) circle[radius=\rtwo];
\node at (\mtwo, 0) {\footnotesize $C_2$};

\draw[thick,dashed,domain=-1.5:.55] plot(\x,{ \k*(\x-\p) });
\draw[thick,dashed,domain=-1.5:.55] plot(\x,{- \k*(\x-\p) });
\draw[thick,dotted,domain=-1.5:.55] plot(\x,{ \h*(\x-\q) });
\draw[thick,dotted,domain=-1.5:.55] plot(\x,{- \h*(\x-\q) });

\def\bv{0.4103}  
\node[left] at (-0.012, \bv) {};
\end{tikzpicture}
\caption{ 
Transverse (dashed) and direct (dotted) common tangents
of circles $C_1$ and~$C_2$.
Circle $C_3$ lies low enough so that a line intersects all three circles.
}
\label{fig:tangents}
\end{figure}

\begin{lemma}
\label{lem:U3_vertical_bound_angle}
Fix \(\alpha\in(0,\pi)\). Consider circles
\(C_i\) with centers \(P_i=(x_i,y_i)\) and radii \(r_i>0\), \(i=1,2,3\).
We assume that \(y_1=y_2=0\), that
$
P_3=(v_3\cos\alpha,v_3\sin\alpha),
$
and that
$
|x_1-x_2|>r_1+r_2
$
(circles \(C_1\) and \(C_2\) do not overlap).
Assume that there exists a line that intersects the circles \(C_1,C_2,C_3\).
Then, whenever
\[
K_{12}|\cot\alpha|<1,
\]
we have
\begin{equation}
 |v_3|
 \wle
 \frac{
 \frac{|x_1|+|x_2|}{2}\,K_{12}
 +
 \left(\frac{r_1+r_2}{2}+r_3\right)(1+K_{12})
 }{
 \sin\alpha\,\left(1-K_{12}|\cot\alpha|\right)
 },
 \label{eq:v3_angle_bound}
\end{equation}
where
\begin{equation}
 K_{12}
 \weq
 \frac{r_1+r_2}{\sqrt{(x_1-x_2)^2-(r_1+r_2)^2}}.
 \label{eq:CircleIntersectionK12}
\end{equation}
In particular, if
\[
K_{12}\le \kappa_\alpha,
\qquad\text{where}\qquad
\kappa_\alpha
\weq
\begin{cases}
(2|\cot\alpha|)^{-1}, & \alpha\neq \pi/2,\\
\infty, & \alpha=\pi/2,
\end{cases}
\]
then
\begin{equation}
 |v_3|
 \wle
 \frac{2}{\sin\alpha}
 \left[
 \frac{|x_1|+|x_2|}{2}\,K_{12}
 +
 \left(\frac{r_1+r_2}{2}+r_3\right)(1+K_{12})
 \right].
 \label{eq:v3_angle_bound_simple}
\end{equation}
\end{lemma}
\begin{proof}
Assume that a line \(\cL\) intersects all the circles \(C_1,C_2,C_3\).
Since \(C_1\) and \(C_2\) are disjoint and have centers on the \(x\)-axis,
\(\cL\) cannot be vertical.
Then we may parameterize the line as
\[
\cL=\{(x,y):y=\gamma x+\beta\}
\]
for some numbers \(\gamma,\beta\in\R\).
The distance from \(P_i\) to \(\cL\) can be written as
\[
 d(P_i,\cL)
 \weq \frac{|y_i-\gamma x_i-\beta|}{\sqrt{1+\gamma^2}}.
\]

Since \(\cL\) intersects \(C_1\) and \(C_2\), we see that
\begin{equation}
 \label{eq:CircleIntersectionC1C2_angle}
 \frac{|\gamma x_1+\beta|}{\sqrt{1+\gamma^2}} \le r_1
 \quad \text{and} \quad
 \frac{|\gamma x_2+\beta|}{\sqrt{1+\gamma^2}} \le r_2.
\end{equation}
By the triangle inequality,
\[
 |\beta|
 \wle |-\gamma x_i|+|\gamma x_i+\beta|
 \wle |\gamma x_i|+r_i\sqrt{1+\gamma^2}
\]
for \(i=1,2\). By averaging these inequalities, we obtain
\begin{equation}
\label{eq:beta_angle_bound}
 |\beta|
 \wle \frac{|x_1|+|x_2|}{2}\,|\gamma|
 + \frac{r_1+r_2}{2}\sqrt{1+\gamma^2}.
\end{equation}

Since \(\cL\) also intersects \(C_3\), whose center is
\(
P_3=(v_3\cos\alpha,v_3\sin\alpha),
\)
we see that
\[
 \frac{|v_3\sin\alpha-\gamma v_3\cos\alpha-\beta|}{\sqrt{1+\gamma^2}} \le r_3.
\]
Hence
\[
 |v_3(\sin\alpha-\gamma\cos\alpha)|
 \wle |\beta|+r_3\sqrt{1+\gamma^2}.
\]
By combining this with \eqref{eq:beta_angle_bound}, we obtain
\[
 |v_3|\,|\sin\alpha-\gamma\cos\alpha|
 \wle \frac{|x_1|+|x_2|}{2}\,|\gamma|
 + \left(\frac{r_1+r_2}{2}+r_3\right)\sqrt{1+\gamma^2}.
\]
Applying \(\sqrt{1+\gamma^2}\le 1+|\gamma|\), we get
\begin{equation}
\label{eq:v3_gamma_bound}
 |v_3|\,|\sin\alpha-\gamma\cos\alpha|
 \wle \frac{|x_1|+|x_2|}{2}\,|\gamma|
 + \left(\frac{r_1+r_2}{2}+r_3\right)(1+|\gamma|).
\end{equation}

Inequality \eqref{eq:CircleIntersectionC1C2_angle} also implies that
\(
 |\gamma x_1+\beta| \le r_1\sqrt{1+\gamma^2},
\)
and
\(
 |\gamma x_2+\beta| \le r_2\sqrt{1+\gamma^2},
\)
so that
\[
 |\gamma(x_1-x_2)|
 = \left|(\gamma x_1+\beta)-(\gamma x_2+\beta)\right|
 \le |\gamma x_1+\beta|+|\gamma x_2+\beta|
 \le (r_1+r_2)\sqrt{1+\gamma^2}.
\]
By squaring both sides of the above inequality and rearranging, we see that
\[
 \gamma^2\left((x_1-x_2)^2-(r_1+r_2)^2\right)\le (r_1+r_2)^2.
\]
Since \(C_1\) and \(C_2\) are disjoint, we have
\(
 |x_1-x_2|>r_1+r_2,
\)
so the factor
\(
 (x_1-x_2)^2-(r_1+r_2)^2
\)
is strictly positive. Dividing by this factor and taking square roots gives
\begin{equation}
\label{eq:gamma_bound_by_K12}
 |\gamma|
 \wle
 \frac{r_1+r_2}{\sqrt{(x_1-x_2)^2-(r_1+r_2)^2}}
 \weq K_{12}.
\end{equation}

Finally, since \(\alpha\in(0,\pi)\), we have \(\sin\alpha>0\), and therefore
\[
 |\sin\alpha-\gamma\cos\alpha|
 \ge \sin\alpha-|\gamma||\cos\alpha|
 = \sin\alpha\left(1-|\gamma||\cot\alpha|\right)
 \ge \sin\alpha\left(1-K_{12}|\cot\alpha|\right),
\]
where the last inequality uses \eqref{eq:gamma_bound_by_K12}.
If \(K_{12}|\cot\alpha|<1\), then the right-hand side is strictly positive, and combining this with
\eqref{eq:v3_gamma_bound} and \eqref{eq:gamma_bound_by_K12} yields
\[
 |v_3|
 \wle
 \frac{
 \frac{|x_1|+|x_2|}{2}\,K_{12}
 +
 \left(\frac{r_1+r_2}{2}+r_3\right)(1+K_{12})
 }{
 \sin\alpha\,\left(1-K_{12}|\cot\alpha|\right)
 }.
\]
This proves \eqref{eq:v3_angle_bound}.

For the final claim, assume that \(K_{12}\le \kappa_\alpha\). If \(\alpha=\pi/2\), then
\(
K_{12}|\cot\alpha|=0.
\)
If \(\alpha\neq \pi/2\), then by the definition of \(\kappa_\alpha\),
\(
K_{12}|\cot\alpha|\le \frac12.
\)
Hence in all cases,
\(
1-K_{12}|\cot\alpha|\ge \frac12.
\)
Substituting this into \eqref{eq:v3_angle_bound} yields
\[
 |v_3|
 \wle
 \frac{2}{\sin\alpha}
 \left[
 \frac{|x_1|+|x_2|}{2}\,K_{12}
 +
 \left(\frac{r_1+r_2}{2}+r_3\right)(1+K_{12})
 \right],
\]
which is \eqref{eq:v3_angle_bound_simple}. This concludes the proof.
\end{proof}

\addcontentsline{toc}{section}{References}
\bibliographystyle{splncs04}
\bibliography{my}

\end{document}